\newtheorem{theorem}{Theorem}[section]
\newtheorem{lemma}{Lemma}[section]
\newtheorem{remark}{Remark}[section]
\numberwithin{equation}{section}
\def\re{\color{red}}
\newcommand{\titre}[1]{\begin{center}
		{\Large{\bf #1}}
\end{center}}
\begin{document}
	
\titre{Ground states of a coupled pseudo-relativistic Hartree system: existence and concentration behavior}
	\begin{center}
Huiting He$^{*}$, Chungen Liu$^{*}$, Jiabin Zuo$^{*}$
\footnote{Email addresses: Heht21@163.com(Huiting He), liucg@nankai.edu.cn(Chungen Liu), zuojiabin88@163.com(Jiabin Zuo)\\
 $^{*}$ School of Mathematics and Information Science, Guangzhou University, Guangzhou, 510006, P. R. China.}

		\end{center}

	%\medskip
	\vspace*{+0.2cm}
	
	%%%%%%%%%%%%%%%%%%%%%%%%%%%%%%%%%%%%%%%%%%%%%
	
	\begin{abstract}
		This paper is concerned with the ground states of a coupled pseudo-relativistic Hartree system in $\mathbb{R} ^{3} $ with trapping potentials, where the intraspecies and the interspecies interaction are both attractive. By investigating an associated constraint minimization problem, the existence and non-existence of ground states are classified completely. Under certain conditions on the trapping potentials, we present a precise analysis on the concentration behavior of the minimizers as the coupling coefficient goes to a critical value, where the minimizers blow up and the maximum point sequence concentrates at a global minima of the associated trapping potentials. We also identify an optimal blowing up rate under polynomial potentials by establishing some delicate estimates of energy functionals.
	\end{abstract}
	\vspace{0.6cm}
	%%%%%%%%%%%%%%%%%%%%%%%%%%%%%%%%%%%%%%%%%%%%%%%%
	{\bf 2010 Mathematics Subject Classification:}{ 35Q40, 35B40, 35R11}\\
	{\bf Keywords: }{coupled Hartree system, pseudo-relativistic, concentration behavior, constraint minimization problem}

%%%%%%%%%%%
\section{Introduction}
\quad \ In this paper, we are interested in the following a coupled pseudo-relativistic Hartree system with trapping potentials in  $\mathbb{R} ^{3}$
\begin{equation}\label{eP1}
\begin{cases}
\sqrt{- \triangle +  m^{2} }u_{1}+V_{1}(x)_{}u_{1}=\mu u_{1}+a_{1}(\left | x \right | ^{-1}\ast u_{1} ^{2}  )u_{1}+\beta (\left | x \right | ^{-1}\ast u_{2} ^{2} )u_{1},\\
\sqrt{- \triangle +  m^{2} }u_{2}+V_{2}(x)_{}u_{2}=\mu u_{2} +a_{2}(\left | x \right | ^{-1}\ast u_{2} ^{2}  )u_{2}+\beta (\left | x \right | ^{-1}\ast u_{1} ^{2} )u_{2},
\end{cases}
\end{equation}
where $\mu \in \mathbb{R}$ is the suitable Lagrange multiplier, the convolution kernel $\left | x \right | ^{-1}$ denotes the Newtonian gravitational potential, $a_{i}> 0~(i=1,2)$, $\beta > 0$ represent that both the intraspecies interaction inside each component and the interspecies interaction between two components are attractive.

As shown by Lieb and Yau in their seminal work \cite{J22}, system (\ref{eP1}) arises from a quantum mechanical model describing the stellar collapse of the neutron stars. Moreover, in celebrated paper  \cite{Long21}, it was found that such systems can be used to give an effective description on the dynamics of $N$-body bosons with relativistic dispersion  interaction in the mean-field limit. Another physical application of system (\ref{eP1}) was presented in \cite{Amrouss23}, where the nonlinear time-dependent evolution equation is proposed by
\begin{equation}\label{e300}
i\partial_{ t} \psi=\sqrt{- \triangle +  m^{2} }\psi+V(x)\psi-a\left (  \left | x \right | ^{-1}\ast \psi  ^{2}   \right ) \psi \quad {\rm in} \ \mathbb{R} ^{3}.
\end{equation}	
Here, $\psi (t,x) $ is conceived to be a complex-valued wave field and the travelling solitary waves of equation (\ref{e300}) are given by $\psi (t,x)=e^{-i\mu t}u(x) $, where $u(x)$ is a ground state of the associated nonlinear Hartree equation
\begin{equation}\label{e301}
 \sqrt{- \triangle +  m^{2} }u+V(x)u=\mu u+a(\left | x \right | ^{-1}\ast u ^{2}  )u \quad {\rm in} \ \mathbb{R} ^{3}.
 \end{equation}	
 For more physical details, we refer the readers to references \cite{ousbika3,ousbika4,J30,J31}. Also, one can see \cite{ousbika,Ricc,Zhou21} for some interesting results on the classical nonlinear Choquard equation, i.e., the external potential $V(x)=0$ in (\ref{e301}), which occurs in a one-component plasma as an approximation to the Hartree-Fock theory.

 When the non-local operator $\sqrt{- \triangle +  m^{2} }$ is replaced by the Laplacian $-\triangle$ and the Hartree nonlinearity is replaced by pure power type nonlinearity in (\ref{e301}), that is the case of  well-known Gross-Pitaevskii system arising in studying the Bose-Einstein condensate phenomenon with external potential, which has attracted a great deal of attention over the past few years.
 In particular, it is worth mentioning the very relevant contributions for one-component equations by Guo and his collaborators in their series work \cite{Guo24,Guo28,Guo25}, where the authors identified the existence intervals of the ground states and presented a detailed description of the collapse behavior under different types of external potentials.

 Note that, in the muti-component framework,  one has to take into account a certain interspecies interaction between each components, which may display more complicated physical phenomena and thus makes the analysis more challenging. Stimulated by the recent work \cite{Guo26} and \cite{Guo27} , where the authors investigated the ground states of a coupled Gross-Pitaevskii system with attractive interspecies interaction, the existence results and the concentration behavior of the ground states concerning the Laplacian systems with Hartree nonlinear terms were also analyzed in references \cite{Du22} and \cite{J33}, tackling the difficulties arising from the Hartree nonlinearities. Moreover, one can see \cite{Zhou22} for the case with repulsive interspecies interaction.

 For the main topic in the present paper, that is, a non-local system with both pseudo-relativistic operator and nonlinear Hartree terms, some interesting results arise comparing to the classical local case. As recently shown by Yang and Yang in \cite{Bona16}, where a one-component non-local system was considered, they established a delicate energy estimate on the pseudo-relativistic term by using the Fourier transform. Another similar result is proposed by Nguyen in \cite{Xiong21}, where the analysis of the concentration behavior related to the one-component non-local system was constructed under three different types of external potentials. Also, one can see the recent study on the concentration behavior of the minimizers as the stellar mass tends to a certain critical value in \cite{Zheng19}. In addition, it is worth noticing that Wang, Zeng and Zhou very recently generalized the one-component case to the coupled pseudo-relativistic Hartree system (\ref{eP1}) in \cite{Hkhani15}, where the existence results involving certain ranges for the three parameters $a_{1},a_{2},\beta$ were presented by establishing an appropriate auxiliary functional.

 However, to the authors' knowledge, the concentration behavior of the minimizers has rarely been discussed concerning the coupled pseudo-relativistic Hartree system (\ref{eP1}). Inspired by the above work, main results of this paper are focused on not only the existence range of solutions but also more importantly the concentration behavior of the minimizers of {\re a coupled} pseudo-relativistic Hartree system with trapping potentials $V_{i}(x)$ satisfying for $i=1,2$,
 \begin{enumerate}[($\Omega$)]
\item  $0\le V_{i}(x)\in L_{loc}^{\infty }(\mathbb{R} ^{3} ), \lim_{\left | x \right |  \to \infty}V_{i}(x)=\infty, \inf_{x\in \mathbb{R} ^{3} }(V_{1}(x) +V_{2}(x))$ is attained and $\inf_{x\in \mathbb{R} ^{3} }(V_{1}(x) +V_{2}(x))=0$.
\end{enumerate}

To begin with, we now introduce some notations and recall some known results. Let the Sobolev space
\begin{equation}\label{e23}
H^{\frac{1}{2} }(\mathbb{R} ^{3} )=\left \{ u\in L^{2}(\mathbb{R} ^{3};\mathbb{R}):\left \| u \right \|_{H^{\frac{1}{2} }(\mathbb{R} ^{3} )}^{2}:=\int\limits_{\mathbb{R} ^{3}}\left ( 1+2\pi \left | s  \right |    \right )\left | \mathcal{F}u\left ( s   \right ) \right |  ^{2} ds  < \infty        \right \},
\end{equation}	
where $\mathcal{F}u$ stands for the Fourier transform of $u$.
The space $H^{\frac{1}{2} }(\mathbb{R} ^{3} )$ endowed with the inner product
\begin{equation}\label{e24}
\left (u,v\right )_{H^{\frac{1}{2} }(\mathbb{R} ^{3} ) }=\int\limits_{\mathbb{R} ^{3}}\left ( 1+2\pi \left | {  s}  \right |  \right )\overline{\mathcal{F}u({  s})}   \mathcal{F}v\left ( {  s}  \right )d{  s}
\end{equation}
is easily seen to be a Hilbert space. The pseudo-relativistic operator $\sqrt{- \triangle +  m^{2} }$ is defined in Fourier space as multiplication by $\sqrt{\left |2\pi {  s}  \right |  ^{2}+m^{2}}$, i.e.,
\begin{equation}\label{e25}
\sqrt{- \triangle +  m^{2} }u=\mathcal{F} ^{-1}\left ( \sqrt{\left |2\pi {  s}  \right |  ^{2}+m^{2}}\mathcal{F}u({  s} )   \right ), \quad  u\in H^{\frac{1}{2} }(\mathbb{R} ^{3}).
\end{equation}
Moreover, the norm $\left \| \cdot  \right \|_{H^{\frac{1}{2} }(\mathbb{R} ^{3} )}$ can also be given by
\begin{equation}\label{e26}
\left \| u  \right \|_{H^{\frac{1}{2} }(\mathbb{R} ^{3} )}^{2}=\displaystyle\int\limits_{\mathbb{R} ^{3}}\left [ u\left ( -\triangle  \right ) ^{\frac{1}{2} }u+u^{2}   \right ]dx=\displaystyle\int\limits_{\mathbb{R} ^{3}}\left [ \left | \left ( -\triangle  \right ) ^{\frac{1}{4} }u \right | ^{2}+u^{2} \right ] dx,
\end{equation}
one can refer \cite{Galewski} for more details. For ground states of system (\ref{eP1}), it is known that we can discuss equivalently the minimizers of the following constrained
variational minimization problem:
\begin{equation}\label{e1}
 e(a_{1}, a_{2},\beta  ):=\inf_{(u_{1},u_{2})\in \mathcal{M}  } E_{a_{1}, a_{2},\beta}(u_{1},u_{2}),
\end{equation}
where the corresponding energy function $E_{a_{1}, a_{2},\beta}(u_{1},u_{2})$ is given by
\begin{equation}\label{e2}
\begin{split}
E_{a_{1}, a_{2},\beta}(u_{1},u_{2})=&\sum_{i=1}^{2}\int\limits_{\mathbb{R} ^{3} }\left [ u_{i}\sqrt{-\triangle +m^{2} }u_{i}+V_{i}(x)u_{i}^{2} \right ] dx-\sum_{i=1}^{2}\frac{a_{i}}{2}\iint\limits_{\mathbb{R} ^{3}\times \mathbb{R} ^{3}}\frac{u_{i}^{2}(x)u_{i}^{2}(y) }{\left | x-y \right | }dxdy \\
&-\beta \iint\limits_{\mathbb{R} ^{3}\times \mathbb{R} ^{3}}\frac{u_{1}^{2}(x)u_{2}^{2}(y) }{\left | x-y \right | }dxdy, \quad  ({u} _{1},{u} _{2})\in \mathcal{H} _{1}\times \mathcal{H} _{2}.
\end{split}
\end{equation}
Here, the mass constraint $\mathcal{M}$ is defined by
\begin{equation}\label{e3}
\mathcal{M}:= \left \{(u_{1}, u_{2})\in \mathcal{H}_{1}\times \mathcal{H}_{2}:\int\limits_{\mathbb{R} ^{3} }\left(u_{1}  ^{2} + u_{2}  ^{2}\right)dx=1     \right \}
\end{equation}
with
\begin{equation}\label{e4}
\mathcal{H}_{i}= \left \{u\in H^{\frac{1}{2} }(\mathbb{R} ^{3}):\int\limits_{\mathbb{R} ^{3}} V_{i}(x)u^{2}dx< \infty       \right \}, \quad i=1,2,
\end{equation}
and
\begin{equation}\label{e5}
\left \| u \right \|_{\mathcal{H} _{i} }^{2}=\displaystyle\int\limits_{\mathbb{R} ^{3} }\left [ \left | (-\triangle  )^{\frac{1}{4} }u  \right |  ^{2}+V_{i}(x)u^{2}  \right ]dx.
\end{equation}
The classical equation associated to problem (\ref{eP1}) is known as

	\begin{equation} \label{e8}
\sqrt{-\triangle }u(x)+u(x)=\left ( \left | x \right | ^{-1}\ast u^{2}(x)   \right )u(x), \quad  x\in \mathbb{R} ^{3}.
	\end{equation}
Let $Q>0$ be a radially symmetric ground state of (\ref{e8}), it is shown in Lemma 2.2 of \cite{Bona12} that every ground state $Q$ has the decay rate
\begin{equation} \label{e9}
Q(x)=O(\left | x \right | ^{-4} ) \quad {\rm as}\ \left | x \right |  \to \infty.
	\end{equation}
   Denote $a^{\ast }:=\left \| Q \right \|_{2}^{2}$, it is proved in Lemma \ref{le1} in Appendix that all positive ground states of (\ref{e8}) have the same $L^{2}$-norm, that is, $a^{\ast }$ is well-defined. Besides, from (\ref{e11}) in Appendix, $Q$ also satisfies
   \begin{equation} \label{e21}
\int\limits_{\mathbb{R} ^{3} }\left | (-\triangle )^{\frac{1}{4} }Q  \right | ^{2}dx=\int\limits_{\mathbb{R} ^{3} }Q^{2}(x)dx=\frac{1}{2}\iint\limits_{\mathbb{R} ^{3}\times \mathbb{R} ^{3}}\frac{Q^{2}(x)Q^{2}(y) }{\left | x-y \right | }dxdy.
	\end{equation}
   Moreover, it is known as a classical type of Gagliardo-Nirenberg inequality that
   \begin{equation} \label{e17}
\int\limits_{\mathbb{R} ^{3} }\left ( \left | x \right |^{ -1}\ast  u ^{2}    \right )u^{2}dx=\iint\limits_{\mathbb{R} ^{3}\times \mathbb{R} ^{3}}\frac{u^{2}(x)u^{2}(y) }{\left | x-y \right | }dxdy \le \frac{2}{a^{\ast } } \int\limits_{\mathbb{R} ^{3} }\left | (-\triangle )^{\frac{1}{4} }u  \right |^{2}dx\int\limits_{\mathbb{R} ^{3} }u^{2}dx
	\end{equation}
holds for any $u\in H^{\frac{1}{2} }(\mathbb{R} ^{3})$.
Finally, for simplicity, we shall denote the inessential value by $C$, a general positive constant which can change from line to line in the sequel.

  Now, let us turn to introduce the main results. The first result is concerning the existence and non-existence of the minimizers of problem (\ref{e1}).
\begin{theorem} \label{T1}
	Suppose $\left ( \Omega \right )$ holds and denote
\begin{equation} \label{e31}
\beta ^{\ast }:=a ^{\ast }+\sqrt{\left ( a ^{\ast }-a_{1} \right )\left ( a ^{\ast }-a_{2} \right )  },\quad 0< a_{1},a_{2}\le a ^{\ast }.
\end{equation}
Then we have:
\begin{enumerate}[(\romannumeral1)]
\item If $0< a_{1}, a_{2}< a^{\ast } $ and  $0<\beta < \beta ^{\ast } $, then problem (\ref{e1}) has at least one minimizer. Moreover, it holds
$$\lim_{\beta  \to \beta ^{\ast } }  e\left ( a_{1}, a_{2},\beta \right )=e\left ( a_{1}, a_{2},\beta^{\ast } \right )=0, \quad  \ 0< a_{1}, a_{2}< a^{\ast } .$$
\item If  $a_{1}\ge  a^{\ast }$ or $a_{2}\ge  a^{\ast }$ or $\beta \ge  \beta^{\ast }$, then problem (\ref{e1}) has no minimizer.
\end{enumerate}
\end{theorem}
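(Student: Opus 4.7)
My approach to (i) is the direct method, with the crux being a sharp two-component lower bound. Applying the Gagliardo--Nirenberg inequality (\ref{e17}) to each diagonal Hartree integral and Cauchy--Schwarz on the positive-definite Newtonian bilinear form to the cross term, then setting $B_i=\int|(-\triangle)^{1/4}u_i|^{2}dx$ and $\alpha_i=\int u_i^{2}dx$, the energy on $\mathcal{M}$ is bounded below by
\begin{equation*}
E_{a_{1},a_{2},\beta}(u_{1},u_{2})\ge B_{1}\Bigl(1-\tfrac{a_{1}\alpha_{1}}{a^{\ast}}\Bigr)+B_{2}\Bigl(1-\tfrac{a_{2}\alpha_{2}}{a^{\ast}}\Bigr)-\tfrac{2\beta}{a^{\ast}}\sqrt{\alpha_{1}\alpha_{2}\,B_{1}B_{2}}+\sum_{i=1}^{2}\int V_{i}u_{i}^{2}\,dx.
\end{equation*}
The quadratic form in $(\sqrt{B_{1}},\sqrt{B_{2}})$ is positive semidefinite precisely when $(a^{\ast}-a_{1}\alpha_{1})(a^{\ast}-a_{2}\alpha_{2})\ge\beta^{2}\alpha_{1}\alpha_{2}$ on $\alpha_{1}+\alpha_{2}=1$, and minimizing the left side divided by $\alpha_{1}\alpha_{2}$ over $\alpha_{1}\in(0,1)$ pins down the sharp threshold as exactly $(\beta^{\ast})^{2}$ in (\ref{e31}). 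For $\beta<\beta^{\ast}$ this upgrades to a uniform coercivity bound $E\ge\delta(B_{1}+B_{2})+\sum_{i}\int V_{i}u_{i}^{2}$ with $\delta>0$, so every minimizing sequence is bounded in $\mathcal{H}_{1}\times\mathcal{H}_{2}$.

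Hypothesis $(\Omega)$ then supplies compactness: the tail bound $\int_{|x|>R}u_{i}^{2}\le(\essinf_{|x|>R}V_{i})^{-1}\int V_{i}u_{i}^{2}$ combined with local Rellich yields the compact inclusion $\mathcal{H}_{i}\hookrightarrow L^{q}(\mathbb{R}^{3})$ for every $q\in[2,3)$. Hardy--Littlewood--Sobolev converts strong $L^{12/5}$ convergence into strong continuity of all three Hartree integrals, while $\int|(-\triangle)^{1/4}u_{i}|^{2}$, $\int V_{i}u_{i}^{2}$, and the full pseudo-relativistic form $\int u_{i}\sqrt{-\triangle+m^{2}}u_{i}$ are weakly lower semicontinuous; hence a weak limit lies in $\mathcal{M}$ and achieves $e(a_{1},a_{2},\beta)$.

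For $e(a_{1},a_{2},\beta^{\ast})=0$ and the limit $\beta\uparrow\beta^{\ast}$, I use the concentrated family
\begin{equation*}
u_{\tau,i}(x)=c_{i}^{\ast}\,\frac{\tau^{3/2}}{\sqrt{a^{\ast}}}\,Q\bigl(\tau(x-x_{0})\bigr),\qquad i=1,2,
\end{equation*}
with $x_{0}$ a global minimizer of $V_{1}+V_{2}$ (so $V_{1}(x_{0})=V_{2}(x_{0})=0$ by $(\Omega)$) and $(c_{1}^{\ast 2},c_{2}^{\ast 2})=(t^{\ast},1-t^{\ast})$ maximizing $h(t):=a_{1}t^{2}+a_{2}(1-t)^{2}+2\beta^{\ast}t(1-t)$. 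The scaling identities supplied by (\ref{e21}) give $\int|(-\triangle)^{1/4}u_{\tau,i}|^{2}=c_{i}^{\ast 2}\tau$ and $\iint u_{\tau,i}^{2}u_{\tau,j}^{2}/|x-y|=2\tau c_{i}^{\ast 2}c_{j}^{\ast 2}/a^{\ast}$, and a Fourier estimate (using that $\int|\mathcal{F}Q(\xi)|^{2}/|\xi|\,d\xi<\infty$, which follows from $Q\in L^{1}\cap L^{\infty}$ via (\ref{e9})) controls the relativistic correction by $O(1/\tau)$. Since the quadratic-form algebra above yields $h(t^{\ast})=a^{\ast}$ exactly at $\beta=\beta^{\ast}$, the leading $\tau$ terms cancel and $E_{\beta^{\ast}}(u_{\tau,1},u_{\tau,2})\to0$; combined with the (now semidefinite) lower bound this gives $e(\beta^{\ast})=0$. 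Because $E_{\beta}$ is affine in $\beta$, fixing $\tau$ large first and then sending $\beta\to\beta^{\ast-}$ forces $e(\beta)\to0$.

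The non-existence in (ii) follows from three test-function constructions. If $a_{i}>a^{\ast}$, the single-component trial $(v_{\tau},0)$ concentrated where $V_{i}$ is finite gives $E=\tau(1-a_{i}/a^{\ast})+O(1)\to-\infty$. If $a_{1},a_{2}\le a^{\ast}$ but $\beta>\beta^{\ast}$, the two-component ansatz above with $t^{\ast}$ the maximizer of $h$ (now evaluated at the strictly larger $\beta$) satisfies $h(t^{\ast})>a^{\ast}$, yielding $E\to-\infty$. In the truly borderline cases ($a_{i}=a^{\ast}$ with $\beta\le\beta^{\ast}$, or $\beta=\beta^{\ast}$ with $a_{1},a_{2}<a^{\ast}$), the trial functions give $e\le0$ while the lower bound gives $e\ge0$, so $e=0$; any putative minimizer would saturate every intermediate inequality, but the strict gap $\int u\sqrt{-\triangle+m^{2}}u>\int u\sqrt{-\triangle}u$ for $u\not\equiv0$ (due to $m>0$) would force $u_{1}\equiv u_{2}\equiv0$, contradicting $(u_{1},u_{2})\in\mathcal{M}$. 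The main obstacle throughout is the sharp quadratic-form algebra identifying $(\beta^{\ast})^{2}$ as the exact threshold; once that and the trapping-induced compactness are established, the rest is a standard direct-method/test-function dichotomy.
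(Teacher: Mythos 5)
Your overall architecture is sound and close to the paper's: the lower bound, coercivity, compactness via the trapping potentials, and the $-\infty$ blow-down for $a_i>a^\ast$ or $\beta>\beta^\ast$ all match. Your route to the lower bound is genuinely different and arguably cleaner: you apply (\ref{e17}) componentwise and Cauchy--Schwarz to the cross term, reduce to a $2\times 2$ quadratic form in $(\sqrt{B_1},\sqrt{B_2})$, and identify $(\beta^\ast)^2$ by minimizing $(a^\ast-a_1\alpha_1)(a^\ast-a_2\alpha_2)/(\alpha_1\alpha_2)$ over the mass split (the AM--GM computation does give exactly $(\beta^\ast)^2$, and your uniform $\delta>0$ can indeed be justified since $AB-C^2\ge AB(1-\beta^2/(\beta^\ast)^2)$ with $AB$ bounded below). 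The paper instead uses the refined two-component Gagliardo--Nirenberg inequality (\ref{e27}) plus the algebraic decomposition (\ref{e51}); the two are equivalent in strength, but the paper's version has the advantage that the rigidity statement of Lemma \ref{le2} (classification of optimizers of (\ref{e27})) is then directly available for the borderline non-existence argument. Your identification of $t^\ast=\gamma$ as the maximizer of $h$ with $h(\gamma)=a^\ast$ is correct and reproduces the paper's choice $\theta=\gamma$.

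There are two genuine gaps. First, your trial family $u_{\tau,i}=c_i^\ast\tau^{3/2}Q(\tau(x-x_0))/\sqrt{a^\ast}$ carries no cutoff. Under $(\Omega)$ the potentials are only locally bounded with $V_i\to\infty$, so they may grow faster than $|x|^{8}$, in which case $\int V_iQ^2(\tau(x-x_0))\,dx=\infty$ and $(u_{\tau,1},u_{\tau,2})\notin\mathcal{H}_1\times\mathcal{H}_2$; even when finite, the convergence $\int V_iu_{\tau,i}^2\to 0$ needs a dominating function. This is exactly why the paper inserts $\varphi(x-x_0)$ in (\ref{e36}) and then must control the cutoff errors via the decay (\ref{e9}) and Newton's theorem (\ref{e706}); the fix is standard but necessary. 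Second, and more substantively, your borderline non-existence argument rests entirely on the strict gap $\int u\sqrt{-\triangle+m^2}\,u>\int u\sqrt{-\triangle}\,u$, which requires $m>0$. The paper allows $m=0$ (Theorem \ref{T3} explicitly treats that case), and for $m=0$ your argument yields no contradiction: a putative minimizer saturates all your inequalities without being forced to vanish. The paper closes this by a different saturated inequality: equality in (\ref{e27}) and in Lemma \ref{b3} forces, via Lemma \ref{le2}, the explicit form $(\tau\sin\theta\,Q(\alpha x+\eta),\tau\cos\theta\,Q(\alpha x+\eta))$, while $\int V_iu_{i0}^2=0$ together with $V_i\to\infty$ forces compact support, contradicting $Q>0$. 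You would need to add this rigidity step (or restrict the theorem to $m>0$) to complete case (ii) at the threshold.
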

\begin{remark}
	It is worth noticing that the existence result also holds true for the system with repulsive interspecies interaction, i.e., $\beta < 0$, and the system with $\beta = 0$, which can be easily seen in the proof of Theorem \ref{T1} {\rm (i)}.
\end{remark}
Note that, with a mass constraint different from (\ref{e3}) , the authors in \cite{Hkhani15} have investigated problem (\ref{e1}) by constructing an auxiliary functional. Comparing to the pioneer work \cite{Hkhani15}, we address a complete classification involving the three parameters $a_{1}, a_{2}, \beta$ {\re with the mass constraint (\ref{e3}), which leads to the same coefficient $\mu $ of the terms $u_{1}, u_{2}$ in system (\ref{eP1}).} Indeed, the proof of Theorem \ref{T1} is related well to (\ref{e3}) and the following refined Gagliardo-Nirenberg inequality, i.e., for any $({u} _{1},{u} _{2})\in H^{\frac{1}{2} }(\mathbb{R} ^{3})\times H^{\frac{1}{2} }(\mathbb{R} ^{3})$ it holds
\begin{equation} \label{e27}
\begin{split}
&\iint\limits_{\mathbb{R} ^{3}\times \mathbb{R} ^{3}}\frac{ \left ( u_{1}^{2}+u_{2}^{2} \right ) (x)\left ( u_{1}^{2}+u_{2}^{2} \right )(y) }{\left | x-y \right | }dxdy \\
\le &\frac{2}{a^{\ast } } \int\limits_{\mathbb{R} ^{3} }\left(\left | (-\triangle )^{\frac{1}{4} }u_{1}  \right |^{2}+\left | (-\triangle )^{\frac{1}{4} }u_{2}  \right |^{2}\right)dx\int\limits_{\mathbb{R} ^{3} }\left(u_{1}^{2}+u_{2}^{2}\right)dx,
\end{split}
	\end{equation}
which will be described in Lemma \ref{le2} in Appendix. We know that $E_{a_{1}, a_{2},\beta}(u_{1},u_{2})\ge E_{a_{1}, a_{2},\beta}(\left | u_{1} \right | ,\left | u_{2} \right | )$ due to the fact that (\cite{Bona16}, Lemma 4.2)
\begin{equation} \label{e600}
 \int\limits_{\mathbb{R} ^{3} } u  \sqrt{-\triangle +m^{2} }u dx  \ge \int\limits_{\mathbb{R} ^{3} } \left | u \right | \sqrt{-\triangle +m^{2} }\left | u \right |dx, \quad  u\in H^{\frac{1}{2} }(\mathbb{R} ^{3}),
 	\end{equation}
 therefore, without loss of generality, we next restrict the minimizers of problem (\ref{e1}) to nonnegative ones.

  Moreover, let us define a positive constant $\gamma$, i.e.,
  \begin{equation} \label{e48}
\gamma =\frac{\sqrt{a^{\ast }-a_{2}} }{\sqrt{a^{\ast }-a_{1}}+\sqrt{a^{\ast }-a_{2}}  }\in \left ( 0,1 \right ),\ {\rm where}\  0< a_{1}, a_{2}< a^{\ast }.
 \end{equation}
 which plays a fundamental role in the proof of Theorem \ref{T1}. The second main result focuses on the concentration behavior of the nonnegative minimizers for (\ref{e1}) as $\beta \to \beta ^{\ast } $ with a general trapping potential satisfying $\left ( \Omega \right )$.
\begin{theorem} \label{T2}
	Suppose $\left ( \Omega \right )$ holds and let $\left ( u_{1\beta },u_{2\beta }  \right )$ be a nonnegative minimizer of  $e\left ( a_{1}, a_{2},\beta \right )$, where $0< a_{1}, a_{2}< a^{\ast }$ and $0<\beta < \beta ^{\ast } $. Then for any sequence $\left \{ \beta _{k}  \right \} $ with $ \beta _{k} \to \beta ^{\ast }$ as $k\to \infty $, there exists a subsequence of $\left \{ \beta _{k}  \right \} $, still denoted by $\left \{ \beta _{k}  \right \} $, such that for $i=1,2$, each $u_{i\beta _{k}} $ has at least one global maximum point $z_{i\beta _{k}}  $, which satisfies
\begin{equation} \label{e200}
 \lim_{k \to \infty}  z_{1\beta _{k}}= \lim_{k \to \infty}  z_{2\beta _{k}}=x_{0} \quad {\rm for \ some}\ x_{0}\in \mathcal{V},
\end{equation}
where
\begin{equation} \label{e889}
\mathcal{V}:=\left \{ x\in \mathbb{R} ^{3}:V_{1}(x)=V_{2}(x)=0   \right \}.
\end{equation}
 Moreover, we have
\begin{equation} \label{e201}
\begin{cases}
\lim_{k  \to \infty  }\varepsilon _{\beta _{k} }^{\frac{3}{2} }u_{1\beta _{k} }\left ( \varepsilon _{\beta _{k} }x+z_{1\beta _{k} } \right )= \frac{\sqrt{\gamma  }}{\left \| Q \right \| _{2} }Q\left (  x   \right ),\\
\lim_{k  \to \infty  }\varepsilon _{\beta _{k} }^{\frac{3}{2} }u_{2\beta _{k} }\left ( \varepsilon _{\beta _{k} }x+z_{2\beta _{k} } \right )= \frac{\sqrt{1-\gamma  }}{\left \| Q \right \| _{2} }Q\left (  x  \right ),
\end{cases}
\end{equation}
strongly in $H^{\frac{1}{2} }\left ( \mathbb{R} ^{3}  \right ) $, where $\gamma $ is given by (\ref{e48}) and { $Q(x)$ is a positive radially symmetric ground state of (\ref{e8})}, $\varepsilon _{\beta _{k} }>0$ satisfies
\begin{equation} \label{e202}
\varepsilon _{\beta_{k} }:= \left ( \int\limits_{\mathbb{R} ^{3} }\left ( \left | (-\triangle )^{\frac{1}{4} }u_{1\beta_{k} }   \right | ^{2}+\left | (-\triangle )^{\frac{1}{4} }u_{2\beta_{k} }   \right | ^{2} \right ) dx \right )^{-1}\to 0 \quad  {\rm as} \ k\to \infty ,
\end{equation}
\begin{equation} \label{e203}
\lim_{k \to \infty} \frac{\left | z_{1\beta _{k}}-z_{2\beta _{k}}  \right | }{\varepsilon _{\beta _{k}} }=0 ,
\end{equation}
and
\begin{equation} \label{e204}
\lim_{k \to \infty}\frac{\beta ^{\ast }-\beta _{k}  }{\varepsilon _{\beta _{k}} } =0 .
\end{equation}
\end{theorem}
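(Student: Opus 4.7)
The approach is a blow-up-and-rescale analysis driven by the equality case of the refined Gagliardo--Nirenberg inequality (\ref{e27}). Write $(u_{1k}, u_{2k})$ for the minimizer at $\beta_k$ and $\varepsilon_k := \varepsilon_{\beta_k}$. I first prove $\varepsilon_k \to 0$ by contradiction: if $\|(-\Delta)^{1/4} u_{ik}\|_2$ stayed bounded, the mass constraint combined with the coercivity in $(\Omega)$ would yield precompactness in $\mathcal{H}_1 \times \mathcal{H}_2$, and any strong limit would be a minimizer for $e(a_1,a_2,\beta^{\ast}) = 0$, contradicting Theorem \ref{T1}(ii).

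Let $z_{ik}$ be a global maximum of $u_{ik}$ and set $w_{ik}(x) := \varepsilon_k^{3/2} u_{ik}(\varepsilon_k x + z_{ik})$. Fourier scaling using (\ref{e23})--(\ref{e26}) gives $\sum_i \|(-\Delta)^{1/4} w_{ik}\|_2^2 = 1$, $\sum_i \|w_{ik}\|_2^2 = 1$, and the rescaled identity
\[
\varepsilon_k e(a_1,a_2,\beta_k) = \sum_i \int w_{ik}\sqrt{-\Delta + \varepsilon_k^2 m^2}\,w_{ik}\,dx + \varepsilon_k\sum_i\int V_i(\varepsilon_k x + z_{ik}) w_{ik}^2\,dx - \sum_i\tfrac{a_i}{2}\tilde D_{i}^{k} - \beta_k\tilde D_{12}^{k},
\]
where $\tilde D_{i}^{k}$ and $\tilde D_{12}^{k}$ are the natural rescaled Hartree integrals (the latter carrying the shift $(z_{1k}-z_{2k})/\varepsilon_k$). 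A Lions-type concentration-compactness argument, in which the maximum-point normalization excludes vanishing and the positivity of the Hartree terms combined with the mass constraint excludes dichotomy, extracts a nontrivial weak $H^{1/2}$-limit $(w_1, w_2)$. Passing to the limit via the Fourier convergence $\sqrt{-\Delta + \varepsilon_k^2 m^2}\to\sqrt{-\Delta}$, lower semicontinuity, and Br\'{e}zis--Lieb for the Hartree terms yields
\[
\sum_i \|(-\Delta)^{1/4} w_i\|_2^2 \le \sum_i \tfrac{a_i}{2} \iint \tfrac{w_i^2(x) w_i^2(y)}{|x-y|}\,dxdy + \beta^{\ast}\iint \tfrac{w_1^2(x) w_2^2(y)}{|x-y|}\,dxdy.
\]
Writing $\alpha_i := \|w_i\|_2^2$ and inserting (\ref{e17}) bounds the right-hand side by $(a_1\alpha_1^2 + a_2\alpha_2^2 + 2\beta^{\ast}\alpha_1\alpha_2)/a^{\ast} \le 1$, with equality iff $\alpha_1 = \gamma$ and $\alpha_2 = 1-\gamma$, in view of the perfect-square identity
\[
a^{\ast} - a_1\gamma^2 - a_2(1-\gamma)^2 - 2\beta^{\ast}\gamma(1-\gamma) = \bigl(\sqrt{a^{\ast}-a_1}\,\gamma - \sqrt{a^{\ast}-a_2}\,(1-\gamma)\bigr)^2 = 0
\]
built into definition (\ref{e48}) of $\gamma$. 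Since the whole chain must close with equality, each $w_i$ saturates (\ref{e17}), so by Lemma \ref{le1} it equals $\sqrt{\alpha_i}\|Q\|_2^{-1}Q$, and the matching of norms upgrades weak to strong $H^{1/2}$-convergence, establishing (\ref{e201}).

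The remaining claims then follow as corollaries. For (\ref{e200}), coercivity in $(\Omega)$ bounds $\{z_{ik}\}$; the nonnegative rescaled potential term vanishes, hence $V_1(z_{ik})+V_2(z_{ik})\to 0$, forcing each $z_{ik}$ to accumulate in $\mathcal{V}$, and at a common point thanks to (\ref{e203}). For (\ref{e203}) itself, were $|z_{1k}-z_{2k}|/\varepsilon_k$ to stay away from $0$, Riesz's rearrangement inequality applied to the radial decreasing limit profiles would force $\tilde D_{12}^{k}$ to converge to a value strictly below $2\gamma(1-\gamma)/a^{\ast}$, contradicting the saturation established above. For (\ref{e204}), I would match a sharp upper bound on $e(a_1,a_2,\beta_k)$, obtained from the trial pair $(\sqrt{\gamma}\tau^{3/2}\|Q\|_2^{-1}Q(\tau(\cdot-x_0)),\sqrt{1-\gamma}\tau^{3/2}\|Q\|_2^{-1}Q(\tau(\cdot-x_0)))$ optimized in $\tau$, against a lower bound coming from the rescaled identity and a quantified Gagliardo--Nirenberg remainder, which together force $(\beta^{\ast}-\beta_k)/\varepsilon_k\to 0$.

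The main obstacle is the joint concentration of the two components: a priori they may blow up at different rates and at different locations, yet (\ref{e27}) and its equality case, tailor-made to vanish at the splitting ratio $\gamma$ in (\ref{e48}), couples them to the single scale $\varepsilon_k$ and a single location in $\mathcal{V}$. The equality analysis must be deployed twice---once to identify the profile, once to pin the common center---and throughout, all limit passages must be handled through the Fourier representation (\ref{e23})--(\ref{e25}), since the nonlocality of $\sqrt{-\Delta+m^2}$ precludes the usual elliptic compactness machinery. I expect this coupling step, together with the strong-convergence upgrade required for (\ref{e201}), to carry the bulk of the technical effort.
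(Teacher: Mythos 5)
Your overall blow-up strategy matches the paper's, but two of your central steps have genuine gaps. First, you take ``a global maximum $z_{ik}$ of $u_{ik}$'' as your rescaling center without establishing that such a point exists: an $H^{\frac{1}{2}}$ minimizer need not attain its supremum, and the paper has to work for this, deriving from the Euler--Lagrange system (\ref{e95}) the subsolution inequality (\ref{e205}) and invoking the non-local De Giorgi--Nash--Moser estimate (\ref{e206}) together with the uniform $L^{2}$-tail decay to conclude $w_{i\beta_k}(x)\to 0$ as $|x|\to\infty$ uniformly in $k$ (\ref{e208}); only then do global maxima exist. Relatedly, your claim that ``the maximum-point normalization excludes vanishing'' is unsubstantiated (you would need a uniform positive lower bound on $\sup u_{ik}$), and it does not address the possibility that the second component vanishes in the limit. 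The paper instead locates the concentration points via Lions' vanishing lemma applied to the first component, and rules out $w_{20}=0$ by a separate argument: if $w_{20}=0$, then $w_{10}$ solves a single Choquard equation, forcing $\left\| w_{10}\right\|_{2}^{2}\ge a^{\ast}/a_{1}>1$, contradicting the mass bound.

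Second, your pivotal inequality $\sum_{i}\left\|(-\triangle)^{\frac{1}{4}}w_{i}\right\|_{2}^{2}\le\sum_{i}\frac{a_{i}}{2}D(w_{i}^{2},w_{i}^{2})+\beta^{\ast}D(w_{1}^{2},w_{2}^{2})$ does not follow from ``lower semicontinuity and Br\'{e}zis--Lieb'': under weak convergence these tools give $D(w_{i}^{2},w_{i}^{2})\le\liminf_{k}D(w_{ik}^{2},w_{ik}^{2})$, i.e.\ they bound the Hartree energy of the limit from above by the limit of the Hartree energies --- the wrong direction for your chain. Unless you first exclude dichotomy and loss of mass at infinity (which is essentially what you are trying to prove, and which your sketch does not do), one could have $\alpha_{1}+\alpha_{2}<1$ and the equality analysis collapses. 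The paper circumvents this by passing to the limit in the Euler--Lagrange system to obtain the limit PDE (\ref{e96}) and then using its Pohozaev identity (\ref{e99}) to derive the exact identities (\ref{e101}), which force every inequality in the chain to be an equality, pin $\left\|w_{10}\right\|_{2}^{2}+\left\|w_{20}\right\|_{2}^{2}=1$, identify the profiles via Lemma \ref{le2} (the optimizer characterization is Lemma \ref{le2}, not Lemma \ref{le1}), and upgrade weak to strong convergence. Your Riesz-rearrangement idea for (\ref{e203}) is a workable alternative to the paper's critical-point argument, but (\ref{e204}) follows much more directly from $(\beta^{\ast}-\beta_{k})D(u_{1\beta_{k}}^{2},u_{2\beta_{k}}^{2})\to0$ combined with $D(w_{1\beta_{k}}^{2},w_{2\beta_{k}}^{2})\to 2\gamma(1-\gamma)/a^{\ast}>0$ than from the sharp energy matching you propose, which is in any case unavailable for general potentials satisfying $(\Omega)$.
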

Observe that, there are two non-local terms in our system (pseudo-relativistic operator $\sqrt{-\triangle+m^{2} }$ and Hartree nonlinearities), we can call {\re it} a double non-local problem. It is worth mentioning that the pseudo-relativistic operator $\sqrt{-\triangle +m^{2} }$ is translation invariant but not scaling invariant, which makes our estimates of the energy functional more complicated and challenging. Indeed, denote $w(x )=u(x-x_{0} )$ for any $x_{0} \in \mathbb{R} ^{3}$ and $w,u\in H^{\frac{1}{2} }(\mathbb{R} ^{3}) $, then by Fourier transform, we have
 \begin{equation} \label{e700}
  \begin{split}
  \sqrt{-\triangle +m^{2} }w(x )=&\mathcal{F} ^{-1}\left ( \sqrt{\left |2\pi { s}  \right |  ^{2}+m^{2}}\mathcal{F}w({ s} )   \right )\left ( x  \right )\\
=&\int\limits_{\mathbb{R} ^{3} }e^{i2\pi { s} x } \sqrt{\left | 2\pi { s}  \right | ^{2}+  m^{2}  }  \left ( \int\limits_{\mathbb{R} ^{3} }e^{-i2\pi { s} y}w\left (y \right )dy\right ) d{ s} \\
=& \displaystyle \int\limits_{\mathbb{R} ^{3} }e^{i2\pi { s} x } \sqrt{\left | 2\pi { s}  \right | ^{2}+  m^{2}  }  \left (\int\limits_{\mathbb{R} ^{3} }e^{-i2\pi { s} y}u\left (y-x_{0}  \right )dy \right ) d{ s} \\
=& \displaystyle \int\limits_{\mathbb{R} ^{3} }e^{i2\pi { s} (x-x_{0}) } \sqrt{\left | 2\pi { s}  \right | ^{2}+  m^{2}  } \left ( \int\limits_{\mathbb{R} ^{3} }e^{-i2\pi { s} \left ( y-x_{0} \right ) }u\left (y-x_{0}  \right )dy \right ) d{ s} \\
=&\mathcal{F} ^{-1}\left ( \sqrt{\left |2\pi { s}  \right |  ^{2}+m^{2}}\mathcal{F}u({ s} )   \right )\left ( x -x_{0}  \right )\\
=& \sqrt{-\triangle +m^{2} }u(x-x_{0}),
\end{split}
   \end{equation}
which implies the translation invariance holds. On the other hand, let $w(x)=u(\lambda  x)$  for any $ \lambda \in \mathbb{R}$ and $w,u\in H^{\frac{1}{2} }(\mathbb{R} ^{3}) $, then we can obtain by taking $\xi =\lambda y$ and $\delta =\frac{{ s}}{\lambda }$ that
 \begin{equation} \label{e701}
  \begin{split}
  \sqrt{-\triangle +m^{2} }w(x )=&\mathcal{F} ^{-1}\left ( \sqrt{\left |2\pi { s}  \right |  ^{2}+m^{2}}\mathcal{F}w({ s} )   \right )\left ( x  \right )\\
=&\int\limits_{\mathbb{R} ^{3} }e^{i2\pi { s} x } \sqrt{\left | 2\pi { s}  \right | ^{2}+  m^{2}  }  \left ( \int\limits_{\mathbb{R} ^{3} }e^{-i2\pi { s} y}w\left (y \right )dy \right ) d{ s} \\
=&\int\limits_{\mathbb{R} ^{3} }e^{i2\pi { s} x } \sqrt{\left | 2\pi { s}  \right | ^{2}+  m^{2}  }  \left ( \int\limits_{\mathbb{R} ^{3} }e^{-i2\pi { s} y}u\left (\lambda y \right )dy \right ) d{ s} \\
=&\lambda \int\limits_{\mathbb{R} ^{3} }e^{i2\pi \delta  \lambda  x } \sqrt{\left | 2\pi \delta    \right | ^{2}+  \lambda ^{-2} m^{2}  }  \left ( \int\limits_{\mathbb{R} ^{3} }e^{-i2\pi \delta  \xi }u\left (\xi  \right )d\xi  \right ) d\delta\\
=&\lambda \mathcal{F} ^{-1}\left ( \sqrt{\left |2\pi \delta   \right |  ^{2}+\lambda ^{-2}m^{2}}\mathcal{F}u(\delta )   \right )\left ( \lambda x   \right )  \\
=&\lambda \sqrt{-\triangle +\lambda ^{-2}m^{2} }u(\lambda x),
  \end{split}
   \end{equation}
which indicates that the scaling invariance of $\sqrt{-\triangle +m^{2} }$ fails with multiplying an extra scaling coefficient $\lambda$. Also, it is easy to verify that the Hartree nonlinear terms have similar translation and scaling properties. Therefore, our aim is to develop some new techniques to overcome these difficulties arising from the double non-local terms according to the Fourier transform developed in \cite{Bona16} and the classical Hardy-Littlewood-Sobolev inequality.

Through the results of reference  \cite{Du22} and its proof we found that the existence of the global maximum point of each nonnegative minimizer is based heavily on the tool of non-local De Giorgi-Nash-Moser theory introduced in \cite{Imbesi} and \cite{Kim}. But unfortunately we can not obtain the uniqueness of the global maximum point for our problem (\ref{e1}) since the deficiency of classical elliptic regularity theory in fractional Sobolev space, which plays a crucial role in proving the smoothness of the minimizer sequences. Therefore, this can be an open question for future research.

In order to give a further analysis on the concentration behavior of nonnegative minimizers, in the following, we basically assume the trapping potential $V_{i}(x)$ of the form
\begin{equation} \label{e212}
V_{i}(x)=h_{i}(x)\prod_{j=1}^{m_{i} }\left | x-x_{ij} \right | ^{q_{ij} },\  q_{ij}>0 \ {\rm and}\ m_{i}\in \mathbb{N} ^{+},\ i=1,2,
\end{equation}
where $h_{i}(x)\in C_{loc}^{\alpha }(\mathbb{R} ^{3} )$ satisfies $\frac{1}{C} <h_{i}(x)< C$ for some positive constant $C$ and $x_{ij}\ne x_{ik}$ for $j\ne k$. Also, we suppose,  without losing generality, that there is a positive integer $b$ satisfying $1\le b\le \min \left \{m_{ 1},m_{ 2}  \right \}  $ such that
\begin{equation} \label{e213}
\begin{cases}
x_{1j}=x_{2j}:=x_{j},  \ \quad {\rm where}\ j=1,\cdots ,b; \\
x_{1j}\ne x_{2k}, \quad \quad \ \ \ \ \ \ {\rm where}\ j\in \left \{ b+1,\cdots ,m_{1}  \right \} \ {\rm and}\ k\in \left \{b+1,\cdots,m_{2}   \right \}.
\end{cases}
\end{equation}
Notice that, (\ref{e213}) indicates
\begin{equation} \label{e214}
\mathcal{V}=\left \{ x\in \mathbb{R} ^{3}:V_{1}(x)=V_{2}(x)=0   \right \}=\left \{ x_{1},\cdots , x_{b}   \right \}.
\end{equation}
Additionally, we define $\lambda _{j} $ for $j=1,\cdots ,b  $ by
\begin{equation} \label{e215}
\lambda _{j}:=
\begin{cases}
\lim_{x \to x_{j} }\frac{\gamma V_{1}(x)}{\left | x-x_{j} \right |^{q_{1j}} }\displaystyle \int\limits_{\mathbb{R}^{3} }\left | { y} \right |^{q_{1j}}Q^{2}({ y} )d{ y}   \ \  \quad \quad \quad \quad \quad \quad \  {\rm if }\quad  q_{1j}<q_{2j},  \\
\lim_{x \to x_{j} }\frac{\gamma V_{1}(x)+(1-\gamma )V_{2}(x)  }{\left | x-x_{j} \right |^{q_{1j}} }\displaystyle \int\limits_{\mathbb{R}^{3} }\left | { y}  \right |^{q_{1j}}Q^{2}({ y} )d{ y}   \quad \quad \quad \ \  {\rm if } \quad q_{1j}=q_{2j}, \\
\lim_{x \to x_{j} }\frac{(1-\gamma ) V_{2}(x)}{\left | x-x_{j} \right |^{q_{2j}} }\displaystyle \int\limits_{\mathbb{R}^{3} }\left | { y}  \right |^{q_{2j}}Q^{2}({ y} )d{ y}  \ \ \quad \quad \quad \quad \quad \ \   {\rm if }\quad  q_{1j}>q_{2j},
\end{cases}
\end{equation}
where $\gamma $ is given by (\ref{e48}). Define
\begin{equation} \label{e216}
q_{j}:=\min \left \{ q_{1j},q_{2j} \right \},\ j=1,\cdots , b;\quad q_{0}:=\max_{1\le j\le b}\  q_{j},
\end{equation}
so that
\begin{equation} \label{e217}
Z:=\left \{ x_{j}:q_{j}=q_{0},j=1,\cdots ,b  \right \} \subset \mathcal{V} .
\end{equation}
Set
\begin{equation} \label{e218}
\lambda _{0}:=\min_{j\in \Gamma }\lambda _{j},\quad {\rm where}\ \Gamma :=\left \{ j:x_{j}\in Z  \right \} ,
\end{equation}
and let
\begin{equation} \label{e219}
Z_{0}:=\left \{ x_{j}\in Z:\lambda _{j}=\lambda _{0}    \right \}
\end{equation}
denote the set of the flattest common minimum points of $V_{i}(x)   $ for $i=1,2$. Our third significant result is about the precise concentration behavior of nonnegative minimizers for (\ref{e1}) with a polynomial trapping potential defined above.
\begin{theorem} \label{T3}
	Suppose $ V_{i}(x) \left (i=1,2\right )$ satisfy (\ref{e212}) and (\ref{e213}), let $\left ( u_{1\beta },u_{2\beta }  \right )$ be a nonnegative minimizer of $e\left ( a_{1}, a_{2},\beta \right )$, where $0< a_{1}, a_{2}< a^{\ast }$ and $0<\beta < \beta ^{\ast } $. Moreover, if it holds $0<q_{0}<1$ when $m\ne 0$ and $0<q_{0}<\frac{5}{2} $ when $m=0$,
then for any sequence $\left \{ \beta _{k}  \right \} $ with $ \beta _{k} \to \beta ^{\ast }$ as $k\to \infty $, there exists a subsequence of $\left \{ \beta _{k}  \right \} $, still denoted by $\left \{ \beta _{k}  \right \} $, such that for $i=1,2$, each $u_{i\beta _{k}} $ has at least one global maximum point $z_{i\beta _{k}}  $, which satisfies
\begin{equation} \label{e220}
 \lim_{k \to \infty}  \frac{z_{1\beta _{k}}-x_{j_{0}}}{\epsilon _{\beta _{k}}} = \lim_{k \to \infty} \frac{z_{2\beta _{k}}-x_{j_{0}}}{\epsilon _{\beta _{k}}}=0 \quad {\rm for \ some} \ x_{j_{0}}\in Z_{0},
\end{equation}
where $\epsilon _{\beta _{k}}>0$ is given by
\begin{equation} \label{e221}
\epsilon _{\beta _{k}}:=\left [ \frac{2\gamma (1-\gamma )}{q_{0}\lambda_{0} } \left ( \beta ^{\ast }-\beta _{k}   \right )  \right ]^{\frac{1}{q_{0}+1} } .
\end{equation}
 Moreover, we have
\begin{equation} \label{e722}
\begin{cases}
\lim_{k  \to \infty  }\epsilon  _{\beta _{k} }^{\frac{3}{2} }u_{1\beta _{k} }\left ( \epsilon  _{\beta _{k} }x+z_{1\beta _{k} } \right )= \frac{\sqrt{\gamma  }}{\left \| Q \right \| _{2} }Q\left (  x   \right ),\\
\lim_{k  \to \infty  }\epsilon  _{\beta _{k} }^{\frac{3}{2} }u_{2\beta _{k} }\left ( \epsilon  _{\beta _{k} }x+z_{2\beta _{k} } \right )= \frac{\sqrt{1-\gamma  }}{\left \| Q \right \| _{2} }Q\left (  x  \right ),
\end{cases}
\end{equation}
strongly in $H^{\frac{1}{2} }\left ( \mathbb{R} ^{3}  \right ) $ and
\begin{equation} \label{e223}
\lim_{k \to \infty} \epsilon  _{\beta_{k} }  \int\limits_{\mathbb{R} ^{3} }\left ( \left | (-\triangle )^{\frac{1}{4} }u_{1\beta_{k} }   \right | ^{2}+\left | (-\triangle )^{\frac{1}{4} }u_{2\beta_{k} }   \right | ^{2} \right ) dx =1.
\end{equation}
\end{theorem}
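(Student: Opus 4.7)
The plan is to identify the precise rate $\epsilon_{\beta_k}$ via matching upper and lower bounds for $e(a_1,a_2,\beta_k)$, then bootstrap the resulting asymptotic to pin down both the scale of the blow-up and the concentration point within $Z_0$. By Theorem \ref{T2} we already know that a subsequence of the nonnegative minimizers concentrates at some $x_0\in\mathcal V=\{x_1,\dots,x_b\}$ at scale $\varepsilon_{\beta_k}\to 0$, with the Gaussian-type profile (\ref{e201}). What remains is to show that $x_0\in Z_0$, that $\varepsilon_{\beta_k}\sim\epsilon_{\beta_k}$, that $\varepsilon_{\beta_k}/\epsilon_{\beta_k}\to 1$, and that the off-center shift is $o(\epsilon_{\beta_k})$.

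For the upper bound, I would build the test pair
\[
u_1^{\mathrm{tr}}(x)=\sqrt{\tfrac{\gamma}{a^{*}}}\,\epsilon^{-3/2}Q\!\left(\tfrac{x-x_{j_0}}{\epsilon}\right),\qquad u_2^{\mathrm{tr}}(x)=\sqrt{\tfrac{1-\gamma}{a^{*}}}\,\epsilon^{-3/2}Q\!\left(\tfrac{x-x_{j_0}}{\epsilon}\right),
\]
with $x_{j_0}\in Z_0$ arbitrary and $\epsilon>0$ a free scale. Inserting this pair into $E_{a_1,a_2,\beta_k}$, the Fourier computation of \cite{Bona16} together with (\ref{e21}) yields
\[
\sum_i\int u_i^{\mathrm{tr}}\sqrt{-\triangle+m^{2}}\,u_i^{\mathrm{tr}}\,dx=\frac{1}{\epsilon}+O(\epsilon m^{2}),
\]
while the Hartree self- and cross-terms combine, via (\ref{e21}) again, to give $\frac{1}{\epsilon a^{*}}\bigl[a_1\gamma^{2}+a_2(1-\gamma)^{2}+2\beta_k\gamma(1-\gamma)\bigr]$. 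The key algebraic identity is that at $\beta=\beta^{*}$, substituting $\gamma$ from (\ref{e48}) gives $a_1\gamma^{2}+a_2(1-\gamma)^{2}+2\beta^{*}\gamma(1-\gamma)=a^{*}$, so the kinetic and Hartree pieces together contribute only $\frac{2\gamma(1-\gamma)(\beta^{*}-\beta_k)}{a^{*}\epsilon}+O(\epsilon m^{2})$. For the trapping term, Taylor expansion of $V_i$ at $x_{j_0}$ and (\ref{e215}) yield $\sum_i\int V_i(u_i^{\mathrm{tr}})^{2}dx=\frac{\lambda_0}{a^{*}}\epsilon^{q_0}(1+o(1))$. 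Minimizing the resulting function $\frac{2\gamma(1-\gamma)(\beta^{*}-\beta_k)}{a^{*}\epsilon}+\frac{\lambda_0\epsilon^{q_0}}{a^{*}}$ over $\epsilon$ gives the unique critical point $\epsilon_{\beta_k}$ as in (\ref{e221}), with optimal value $\frac{q_0+1}{q_0 a^{*}}\lambda_0\epsilon_{\beta_k}^{q_0}$. The restrictions $q_0<1$ (if $m\neq 0$) or $q_0<5/2$ (if $m=0$) are what keep the $O(\epsilon m^2)$ relativistic correction and the second-order polynomial remainder strictly below the $O(\epsilon^{q_0})$ leading term.

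For the matching lower bound, I would apply Theorem \ref{T2} with the rescaled sequences $\widetilde u_{i,k}(x)=\varepsilon_{\beta_k}^{3/2}u_{i\beta_k}(\varepsilon_{\beta_k}x+z_{i\beta_k})$, whose limits are prescribed by (\ref{e201}). Writing the energy in the rescaled variables, the refined Gagliardo–Nirenberg inequality (\ref{e27}) combined with the algebraic identity at $\beta^{*}$ shows that the kinetic-minus-Hartree part of $E_{a_1,a_2,\beta_k}$ is bounded below by $\frac{2\gamma(1-\gamma)(\beta^{*}-\beta_k)}{a^{*}\varepsilon_{\beta_k}}(1-o(1))$, while Fatou applied to the potential term, after translating by $z_{i\beta_k}\to x_0$, yields $\liminf \frac{1}{\varepsilon_{\beta_k}^{q_0}}\sum_i\int V_i u_{i\beta_k}^{2}dx\ge \frac{\lambda_j}{a^{*}}$ for whichever $x_j=x_0$ the minimizer concentrates at. Combining these with $e(a_1,a_2,\beta_k)\le C\epsilon_{\beta_k}^{q_0}$ forces $x_0\in Z_0$ (otherwise $q_j<q_0$ or $\lambda_j>\lambda_0$ produces a contradiction), and also forces $\varepsilon_{\beta_k}/\epsilon_{\beta_k}\to 1$, proving (\ref{e223}). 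Once $\varepsilon_{\beta_k}$ and $\epsilon_{\beta_k}$ are comparable, rerunning Theorem \ref{T2} with scale $\epsilon_{\beta_k}$ in place of $\varepsilon_{\beta_k}$ gives (\ref{e722}).

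Finally, for the off-center estimate (\ref{e220}), I set $y_{i,k}=(z_{i\beta_k}-x_{j_0})/\epsilon_{\beta_k}$ and examine the potential term after translating by $y_{i,k}$: if along a subsequence $|y_{i,k}|$ stays bounded away from $0$, then the integral $\int V_i(\epsilon_{\beta_k}x+z_{i\beta_k})\widetilde u_{i,k}^{2}(x)\,dx$ picks up a strictly larger coefficient than $\lambda_0/a^{*}$ in the limit (by the minimality of $\lambda_0$ among points in $Z$ and by strict convexity of $|y+y_\infty|^{q_0}$ against the $Q^{2}$-measure), contradicting the sharp upper bound; the unbounded case is handled similarly using $V_i(x)\to\infty$. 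The principal obstacle throughout is the loss of exact scaling invariance of $\sqrt{-\triangle+m^{2}}$ highlighted in (\ref{e701}): controlling the $\epsilon m^{2}$ discrepancy between $\sqrt{-\triangle+m^{2}}$ and $\sqrt{-\triangle}$ uniformly along the minimizing sequence, and propagating it through both the upper and lower bound estimates without spoiling the leading-order cancellation at $\beta=\beta^{*}$, is what requires the sharp Fourier estimates of \cite{Bona16} and the restriction on $q_0$.
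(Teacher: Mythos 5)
Your proposal is correct and follows essentially the same route as the paper: a sharp upper bound from the optimally scaled $Q$-profile trial pair centered at a point of $Z_0$ (the paper's Lemma 4.1, which also inserts a cutoff $\varphi$ so the trapping integral is finite), a matching lower bound via the refined Gagliardo--Nirenberg inequality, Fatou's lemma and Young's inequality, and the rearrangement identity $\int|y+z_0|^{q}Q^{2}\,dy\ge\int|y|^{q}Q^{2}\,dy$ with equality iff $z_0=0$ to pin down the center. The only blemish is a harmless arithmetic slip: the minimal value of $\frac{2\gamma(1-\gamma)(\beta^{*}-\beta_k)}{a^{*}\epsilon}+\frac{\lambda_0\epsilon^{q_0}}{a^{*}}$ is $\frac{(q_0+1)\lambda_0}{a^{*}}\epsilon_{\beta_k}^{q_0}$, not $\frac{(q_0+1)\lambda_0}{q_0 a^{*}}\epsilon_{\beta_k}^{q_0}$.
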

It is interesting to note from the proof of Theorem \ref{T2} that the behavior of minimizers {\re does} depend on the local character of $V(x)$ at the minima. By taking the trapping potential to be polynomial form, a detailed description of the concentration behavior of the nonnegative minimizers for (\ref{e1}) is presented in Theorem \ref{T3}, in particular, an explicit blow up rate is given in (\ref{e223}). Moreover, (\ref{e220}) implies that the nonnegative minimizers of (\ref{e1}) will actually concentrate at a flattest common minima of $V_{i}(x) (i=1,2)$ as $\beta _{k} \to \beta ^{\ast } $. For the proof of Theorem \ref{T3}, some methods in reference \cite{Du22} are invalid when estimating the energy functional, due to the fact that the ground state concerning fractional Laplace operator has a polynomial decay, which is quite different from the exponential one in the classical Laplacian system.

 The paper is organized as follows. In section 2, we present a complete classifying on existence and non-existence of the minimizers for (\ref{e1}). Section 3 is focused on the analysis of the concentration behavior of the minimizers for (\ref{e1}) with general trapping potentials as $\beta \to \beta ^{\ast }$. In Section 4, we are devoted to estimating the associated functional precisely under polynomial trapping potentials and complete the proof of Theorem \ref{T3}. In Appendix, we provide the detailed proofs of two results used in our study as a complement.
%%%%%%%%%%%%%%%

\section{Existence and non-existence results}
In this section, we investigate the existence and non-existence of the minimizers for (\ref{e1}). To this end, we first recall the basic tools that will be used later.

\begin{lemma}\label{b1}( \cite{Hkhani15}, Lemma 2.1) Suppose $V_{i}\in L_{loc }^{\infty}\left ( \mathbb{R} ^{3}  \right )$ and $\lim_{\left | x \right |  \to \infty } V_{i}(x)= \infty , i=1,2$. Then the embedding $\mathcal{H} _{1} \times \mathcal{H} _{2}\hookrightarrow L^{q}\left ( \mathbb{R} ^{3}  \right )\times L^{q}\left ( \mathbb{R} ^{3}  \right )$ is compact for $2\le q< 3$.
\end{lemma}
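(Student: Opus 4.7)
The plan is a standard three-step argument: establish a continuous embedding $\mathcal{H}_i \hookrightarrow H^{1/2}(\mathbb{R}^3)$, then invoke local Rellich compactness on balls, then use the trapping condition $V_i(x)\to\infty$ to kill the tails, and finally interpolate between $L^2$ and the Sobolev-critical exponent $L^3$ to upgrade to $L^q$ for $2\le q<3$.

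First I would verify that $\|u\|_{L^2(\mathbb{R}^3)}\le C\|u\|_{\mathcal{H}_i}$. The trapping condition gives an $R>0$ with $V_i(x)\ge 1$ for $|x|>R$, whence
\begin{equation*}
\int_{|x|>R} u^2\,dx \le \int_{|x|>R} V_i(x) u^2\,dx \le \|u\|_{\mathcal{H}_i}^2.
\end{equation*}
On $B_R$ I use the fractional Sobolev inequality $\|u\|_{L^3(\mathbb{R}^3)}\le C\|(-\Delta)^{1/4}u\|_{L^2(\mathbb{R}^3)}$ together with H\"older: $\int_{B_R} u^2\,dx \le |B_R|^{1/3}\|u\|_{L^3}^2 \le C\|u\|_{\mathcal{H}_i}^2$. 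Combining these gives $\mathcal{H}_i \hookrightarrow H^{1/2}(\mathbb{R}^3)$ continuously, hence $\mathcal{H}_i\hookrightarrow L^q(\mathbb{R}^3)$ continuously for every $2\le q\le 3$.

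Next, take a bounded sequence $\{(u_n,v_n)\}$ in $\mathcal{H}_1\times\mathcal{H}_2$. Since both spaces are Hilbert, I extract a weakly convergent subsequence $(u_n,v_n)\rightharpoonup(u,v)$, which is also weakly convergent in $H^{1/2}(\mathbb{R}^3)\times H^{1/2}(\mathbb{R}^3)$. For any fixed $R>0$, the Rellich-type compactness $H^{1/2}(B_R)\hookrightarrow L^q(B_R)$ (compact for $q<3$, which is the fractional Sobolev critical exponent in dimension $3$) gives strong convergence $u_n\to u$ in $L^q(B_R)$ and similarly for $v_n$. So the only issue is uniform smallness of the tails.

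For the tail, pick $R$ so large that $V_i(x)\ge M$ on $|x|>R$. Then
\begin{equation*}
\int_{|x|>R}(u_n-u)^2\,dx \le \frac{1}{M}\int_{|x|>R} V_i(x)(u_n-u)^2\,dx \le \frac{C}{M}.
\end{equation*}
For general $q\in(2,3)$ I interpolate on $\mathbb{R}^3\setminus B_R$ between $L^2$ and $L^3$: choosing $\theta\in(0,1]$ with $\frac{1}{q}=\frac{\theta}{2}+\frac{1-\theta}{3}$,
\begin{equation*}
\|u_n-u\|_{L^q(\mathbb{R}^3\setminus B_R)}\le \|u_n-u\|_{L^2(\mathbb{R}^3\setminus B_R)}^{\theta}\,\|u_n-u\|_{L^3(\mathbb{R}^3)}^{1-\theta},
\end{equation*}
and the second factor is bounded by the continuous embedding $\mathcal{H}_i\hookrightarrow L^3$. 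Letting $M\to\infty$ makes the tail uniformly small in $n$, and combining with the local strong convergence yields $u_n\to u$ in $L^q(\mathbb{R}^3)$; the same argument applies to $v_n$, which gives compactness of the product embedding.

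The only slightly delicate point is the first step: the $\mathcal{H}_i$-norm as written in \eqref{e5} does not explicitly contain an $L^2$ term, so one must exploit the coercivity of $V_i$ at infinity together with the fractional Sobolev inequality on $B_R$ to recover full $H^{1/2}$ control. Once that continuous embedding is in hand, the compactness follows from the standard concentration-on-compacts scheme above.
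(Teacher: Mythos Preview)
The paper does not actually supply its own proof of this lemma: it is quoted verbatim as Lemma~2.1 of \cite{Hkhani15} and used as a black box, so there is nothing in the text to compare your argument against line by line. That said, your proposal is the standard (and correct) proof of this type of compact embedding. The three ingredients you isolate --- continuous embedding $\mathcal{H}_i\hookrightarrow H^{1/2}(\mathbb{R}^3)$ obtained by splitting into $B_R$ (handled via the critical Sobolev inequality $\dot H^{1/2}(\mathbb{R}^3)\hookrightarrow L^3(\mathbb{R}^3)$ plus H\"older) and $\mathbb{R}^3\setminus B_R$ (handled via $V_i\ge 1$ there), local Rellich compactness $H^{1/2}\hookrightarrow\hookrightarrow L^q_{\mathrm{loc}}$ for $q<3$, and the uniform tail bound $\int_{|x|>R}(u_n-u)^2\le C/M$ interpolated against the bounded $L^3$ norm --- are exactly what the cited reference uses as well. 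One small point worth making explicit in a final write-up: the nonnegativity $V_i\ge 0$ (assumed in $(\Omega)$ but not restated in the lemma) is what guarantees $\int_{|x|>R}V_i u^2\le \|u\|_{\mathcal{H}_i}^2$ and that $\|\cdot\|_{\mathcal{H}_i}$ is genuinely a norm; and the fact that $u\in\mathcal{H}_i$ with $\|u\|_{\mathcal{H}_i}\le\liminf\|u_n\|_{\mathcal{H}_i}$ follows from weak lower semicontinuity once you know $\mathcal{H}_i$ is Hilbert. Otherwise the argument is complete.
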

The following two inequalities play the key roles in our proof.
\begin{lemma}\label{b2}(Hardy-Littlewood-Sobolev inequality, \cite{Galewski}, Theorem 4.3) Let $p,r>1$ and $0<t<3$ with $\frac{1}{p}+\frac{1}{r}+ \frac{t}{3}=2$. Let $f\in L^{p}\left ( \mathbb{R} ^{3}  \right )$ and  $h\in L^{r}\left ( \mathbb{R} ^{3}  \right )$. Then there exists a sharp constant $C(t,p)$, such that
 $$\left | \ \  \iint\limits_{\mathbb{R} ^{3}\times \mathbb{R} ^{3}}\frac{f(x)h(y)}{\left | x-y \right | ^{t} }dxdy \right | \le C(t,p)\left \| f \right \| _{p}\left \| h \right \| _{r}  ,$$
\end{lemma}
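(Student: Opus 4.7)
The plan is to prove the Hardy--Littlewood--Sobolev inequality by the classical route: first establish a weak-type bound for the Riesz potential by a layer-cake/truncation argument, then upgrade to a strong-type bound via Marcinkiewicz interpolation, and finally recast the strong-type bound as the bilinear estimate stated in the lemma by duality. Since $\bigl|\iint f(x)h(y)|x-y|^{-t}\,dx\,dy\bigr| \le \iint |f(x)||h(y)||x-y|^{-t}\,dx\,dy$, I would assume throughout that $f,h\ge 0$.

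First I would introduce the Riesz potential $(I_t f)(x):=\int_{\mathbb{R}^3}|x-y|^{-t}f(y)\,dy$, so that the left-hand side of the inequality equals $\int_{\mathbb{R}^3} h(x)(I_t f)(x)\,dx$. By the duality pairing with $h\in L^r$, the bilinear estimate with constant $C(t,p)$ is equivalent to the linear bound $\|I_t f\|_{r'} \le C(t,p)\|f\|_p$ where $r'$ is the Hölder conjugate, and the exponent condition $\tfrac{1}{p}+\tfrac{1}{r}+\tfrac{t}{3}=2$ becomes $\tfrac{1}{r'}=\tfrac{1}{p}-(1-\tfrac{t}{3})$, which is precisely the scaling-compatible relation one would predict from replacing $f$ by $f(\lambda\,\cdot)$.

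The key analytic step is the weak-type $(p,r')$ bound. Given $\lambda>0$, I would split the kernel as $|x-y|^{-t}=K_1(x-y)+K_2(x-y)$, where $K_1=|x-y|^{-t}\chi_{\{|x-y|\le R\}}$ and $K_2$ is the remainder, with the cut-off radius $R$ to be chosen as a power of $\lambda/\|f\|_p$. The near part gives a pointwise bound by $\|f\|_p \cdot \|K_1\|_{p'}$ via Hölder (using $t<3$ to integrate the singularity), while the far part can be absorbed into the maximal function or into an $L^\infty$-type bound through Young's convolution inequality applied to $K_2\in L^{p'}$. Optimizing $R$ in $\lambda$ then yields $|\{I_t f>\lambda\}|\lesssim (\|f\|_p/\lambda)^{r'}$. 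Applying Marcinkiewicz interpolation between two such weak bounds (obtained by perturbing $p$ slightly while preserving the scaling relation) produces the strong-type inequality $\|I_tf\|_{r'}\le C\|f\|_p$, and dualizing against $h$ recovers the bilinear form with finite constant $C(t,p)$.

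The main obstacle is the \emph{sharpness} of the constant $C(t,p)$: the Marcinkiewicz route yields some admissible constant but not the optimal one. Establishing sharpness requires a different argument based on the Riesz rearrangement inequality, which shows that the bilinear functional is maximized when $f$ and $h$ are replaced by their symmetric-decreasing rearrangements $f^\ast,h^\ast$; then conformal invariance of $|x-y|^{-t}$ (for the diagonal case $p=r$) reduces the problem to the sphere via stereographic projection, where a compactness/symmetrization argument identifies the extremizers as powers of $(1+|x|^2)^{-1}$. Since the statement only asserts the existence of a sharp constant without specifying its value, I would invoke this rearrangement-and-conformal-invariance framework only to confirm attainability, and otherwise rely on the weak-type/interpolation argument to deliver the stated inequality.
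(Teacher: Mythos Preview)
The paper does not prove this lemma at all: it is simply quoted from Lieb--Loss, \emph{Analysis}, Theorem~4.3, and used as a black box. So there is no ``paper's own proof'' to compare against; any correct argument you supply is a genuine addition.

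Your overall strategy (kernel truncation $\Rightarrow$ weak-type bound for the Riesz potential $\Rightarrow$ Marcinkiewicz interpolation $\Rightarrow$ duality) is the standard route and is sound. Two small points are worth tightening. First, the roles of the near and far pieces are swapped in your description: it is the \emph{far} piece $K_2=|\cdot|^{-t}\chi_{\{|\cdot|>R\}}$ that lies in $L^{p'}$ (this uses $tp'>3$, equivalent under the exponent relation to $r>1$) and gives the pointwise H\"older bound $|K_2*f|\le \|K_2\|_{p'}\|f\|_p$; the \emph{near} piece $K_1$ lies in $L^1$ (this is where $t<3$ is used) and feeds into Young plus Chebyshev. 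Second, your concern about sharpness is unnecessary for what is stated. The lemma only asserts that a sharp constant \emph{exists}, not that it is attained or computable; once you have any finite admissible constant, the sharp constant is simply the infimum over all such, so existence is automatic. The rearrangement/conformal-invariance machinery is needed only if one wants the explicit value or the extremizers, which the lemma does not ask for.
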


\begin{lemma}\label{b3}( \cite{Hkhani15}, Lemma 2.3) Let $u_{1},u_{2}\in L^{\frac{12}{5} }\left ( \mathbb{R} ^{3}  \right )$. Then,
$$\iint\limits_{\mathbb{R} ^{3}\times \mathbb{R} ^{3} }\frac{u_{1}^{ 2}(x)u_{2}^{ 2}(y) }{\left | x-y \right | }dxdy\le\left (\ \  \iint\limits_{\mathbb{R} ^{3}\times \mathbb{R} ^{3} }\frac{u_{1}^{ 2}(x)u_{1}^{ 2}(y) }{\left | x-y \right | }dxdy\iint\limits_{\mathbb{R} ^{3}\times \mathbb{R} ^{3} }\frac{u_{2}^{ 2}(x)u_{2}^{ 2}(y) }{\left | x-y \right | }dxdy \right ) ^{\frac{1}{2} },$$
and the equality holds if and only if $u_{1} ^{2}(x)=\kappa u_{2} ^{2}(x)$ for some $\kappa > 0$.
\end{lemma}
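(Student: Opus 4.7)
The plan is to recognize the left-hand side as a symmetric bilinear form $D(f,g) := \iint_{\mathbb{R}^3\times\mathbb{R}^3} f(x)g(y)/|x-y|\, dx\, dy$ in the arguments $f=u_1^2$ and $g=u_2^2$, show that $D$ is a positive semi-definite inner product on a suitable function space, and then the claimed inequality is exactly the Cauchy--Schwarz inequality associated to $D$.

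First I would verify that $D(u_i^2, u_j^2) < \infty$ for all $i,j \in \{1,2\}$. Since $u_1, u_2 \in L^{12/5}(\mathbb{R}^3)$, we have $u_i^2 \in L^{6/5}(\mathbb{R}^3)$; the choice $p=r=6/5$ and $t=1$ in Lemma~\ref{b2} is admissible because $\frac{5}{6}+\frac{5}{6}+\frac{1}{3}=2$, giving the finiteness of each integral. Next I would establish positive semi-definiteness of $D$. The cleanest route is the $\mathbb{R}^3$ Fourier identity $\mathcal{F}(|\cdot|^{-1})(\xi) = c_0 |\xi|^{-2}$ (for an explicit positive constant $c_0$), which, combined with Plancherel's theorem, yields
\[
D(f,g) = c_0 \int_{\mathbb{R}^3} \frac{\overline{\mathcal{F}f(\xi)}\, \mathcal{F}g(\xi)}{|\xi|^2}\, d\xi.
\]
This representation displays $D(f,g)$ as the inner product of $\mathcal{F}f$ and $\mathcal{F}g$ in the weighted Hilbert space $L^2(\mathbb{R}^3, |\xi|^{-2}\, d\xi)$, so in particular $D(f,f) \ge 0$ and $D$ is Hermitian.

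Once the inner-product structure is in place, the main inequality is immediate from Cauchy--Schwarz applied to the pair $\mathcal{F}(u_1^2)$ and $\mathcal{F}(u_2^2)$ in $L^2(\mathbb{R}^3, |\xi|^{-2}\, d\xi)$. For the equality case, the classical Cauchy--Schwarz equality condition forces $\mathcal{F}(u_1^2)(\xi) = \kappa\, \mathcal{F}(u_2^2)(\xi)$ almost everywhere for some scalar $\kappa$, which by injectivity of the Fourier transform is equivalent to $u_1^2 = \kappa\, u_2^2$ almost everywhere on $\mathbb{R}^3$; the non-negativity (and non-triviality) of both sides then forces $\kappa > 0$.

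The only real obstacle is the technicality of justifying the Fourier representation for functions that a priori lie only in $L^{6/5}(\mathbb{R}^3)$, which is outside the classical Plancherel framework. I would handle this by a standard density/approximation argument: apply the identity and Cauchy--Schwarz to Schwartz approximants of $u_i^2$, then pass to the limit using the fact that $D$ is jointly continuous on $L^{6/5}(\mathbb{R}^3) \times L^{6/5}(\mathbb{R}^3)$ by Hardy--Littlewood--Sobolev (Lemma~\ref{b2}). Alternatively, one can bypass Fourier analysis entirely via the Riesz composition formula $|x-y|^{-1} = c \int_{\mathbb{R}^3} |x-z|^{-2}|z-y|^{-2}\, dz$, which rewrites $D(f,g) = c \int_{\mathbb{R}^3} F(z) G(z)\, dz$ with $F(z) = \int |x-z|^{-2} f(x)\, dx$ and $G(z) = \int |y-z|^{-2} g(y)\, dy$; Cauchy--Schwarz in $L^2(\mathbb{R}^3)$ then gives both the inequality and its equality case with no further Fourier machinery required.
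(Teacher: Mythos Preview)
Your argument is correct and follows the standard route: recognise $D(f,g)$ as a positive semi-definite bilinear form (via the Fourier representation of the Coulomb kernel, or equivalently via the Riesz composition identity), then apply Cauchy--Schwarz and read off the equality case. The technical points you flag---finiteness from Hardy--Littlewood--Sobolev with $p=r=6/5$, and the density argument to justify the Fourier identity on $L^{6/5}$---are handled appropriately.

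There is nothing to compare against in the paper itself: Lemma~\ref{b3} is simply quoted from \cite{Hkhani15} without proof, so your write-up actually supplies what the paper omits. One small caveat worth making explicit in the equality case: if either $u_1$ or $u_2$ vanishes identically, both sides are zero and equality holds trivially, yet no $\kappa>0$ satisfies $u_1^2=\kappa u_2^2$ unless both vanish. Your parenthetical ``non-triviality'' covers this, but you may want to state the nondegeneracy hypothesis explicitly, since the lemma as written is slightly imprecise on this edge case.
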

For simplicity, we define a symmetric bilinear form
$$D\left ( u_{1},u_{2}  \right ) :=\iint\limits_{\mathbb{R} ^{3}\times \mathbb{R} ^{3} }\frac{u_{1}(x)u_{2}(y) }{\left | x-y \right | }dxdy \quad {\rm for \ any} \ u_{1},u_{2}\in L^{\frac{6}{5} }\left ( \mathbb{R} ^{3}  \right ).$$
Taking $p=r=\frac{6}{5}$ in Lemma \ref{b2}, we thus obtain
\begin{equation} \label{e222}
D\left (u_{1}^{ 2} ,u_{2}^{ 2}  \right ) =\iint\limits_{\mathbb{R} ^{3}\times \mathbb{R} ^{3} }\frac{u_{1}^{ 2}(x)u_{2}^{ 2}(y) }{\left | x-y \right | }dxdy\le C_{0}\left \|  u_{1}\right \| _{L^{\frac{12}{5} }}^{2} \left \|  u_{2}\right \| _{L^{\frac{12}{5} }}^{2}  \quad {\rm for \ any} \  u_{1},u_{2}\in L^{\frac{12}{5} }\left ( \mathbb{R} ^{3}  \right ).
\end{equation}
Based on the above notations, we rewrite the associated energy functional $E_{a_{1}, a_{2},\beta}(u_{1},u_{2})$ defined in (\ref{e2}) as
\begin{equation}\label{e29}
\begin{split}
E_{a_{1}, a_{2},\beta}(u_{1},u_{2})=&\sum_{i=1}^{2}\int\limits_{\mathbb{R} ^{3} }\left [ u_{i}\sqrt{-\triangle +m^{2} }u_{i}+V_{i}(x)u_{i}^{2} \right ]dx\\
&-\frac{1}{2}\left [ a_{1}D\left ( u_{1} ^{2},u_{1} ^{2}  \right )+ a_{2}D\left ( u_{2} ^{2},u_{2} ^{2}  \right )+2\beta D\left ( u_{1} ^{2},u_{2} ^{2}  \right )  \right ]
\end{split}
\end{equation}
for any $(u_{1}, u_{2})\in \mathcal{H}_{1}\times \mathcal{H}_{2}$.
We next introduce the convergence of the convolution terms and give the proof.
\begin{lemma}\label{b4} If $\left ( u_{1n},u_{2n} \right )\to \left ( u_{10},u_{20} \right )$  strongly in $L^{\frac{12}{5} }\left ( \mathbb{R} ^{3}  \right ) \times L^{\frac{12}{5} }\left ( \mathbb{R} ^{3}  \right )$, then $$\lim_{n \to \infty }D\left ( u_{1n}^{2},u_{2n}^{2}  \right )=D\left ( u_{10}^{2},u_{20}^{2}  \right ),$$
$$\lim_{n \to \infty }D\left ( u_{in}^{2},u_{in}^{2}  \right )=D\left ( u_{i0}^{2},u_{i0}^{2}  \right ), \quad i=1,2.$$
\end{lemma}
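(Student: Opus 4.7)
The plan is to exploit the bilinearity and symmetry of $D$ together with the Hardy--Littlewood--Sobolev estimate in the form (\ref{e222}), reducing the claim to showing that $u_{in}^{2}\to u_{i0}^{2}$ strongly in $L^{6/5}(\mathbb{R}^{3})$ for $i=1,2$.

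First I would write the two telescoping identities
$$D(u_{1n}^{2},u_{2n}^{2})-D(u_{10}^{2},u_{20}^{2})=D\!\left(u_{1n}^{2}-u_{10}^{2},u_{2n}^{2}\right)+D\!\left(u_{10}^{2},u_{2n}^{2}-u_{20}^{2}\right),$$
and (for the diagonal case)
$$D(u_{in}^{2},u_{in}^{2})-D(u_{i0}^{2},u_{i0}^{2})=D\!\left(u_{in}^{2}-u_{i0}^{2},u_{in}^{2}\right)+D\!\left(u_{i0}^{2},u_{in}^{2}-u_{i0}^{2}\right).$$
Each term on the right is of the form $D(f,g)$ with $f,g\in L^{6/5}(\mathbb{R}^{3})$, so (\ref{e222}) gives the bound $|D(f,g)|\le C_{0}\|f\|_{L^{6/5}}\|g\|_{L^{6/5}}$. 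The task is therefore to control $L^{6/5}$-norms of the relevant squared functions and their differences.

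The crucial input is the pointwise factorisation $u_{in}^{2}-u_{i0}^{2}=(u_{in}-u_{i0})(u_{in}+u_{i0})$. Applying H\"older's inequality with conjugate exponent pair $(12/5,\,12/5)$, whose harmonic mean is $6/5$, yields
$$\|u_{in}^{2}-u_{i0}^{2}\|_{L^{6/5}}\le \|u_{in}-u_{i0}\|_{L^{12/5}}\,\|u_{in}+u_{i0}\|_{L^{12/5}}.$$
By hypothesis the first factor tends to zero and the second stays bounded, so $u_{in}^{2}\to u_{i0}^{2}$ in $L^{6/5}(\mathbb{R}^{3})$. In particular $\|u_{in}^{2}\|_{L^{6/5}}=\|u_{in}\|_{L^{12/5}}^{2}$ and $\|u_{i0}^{2}\|_{L^{6/5}}$ are uniformly bounded.

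Combining the two steps, every term in the telescoping identities is a product of a uniformly bounded $L^{6/5}$-norm and a vanishing $L^{6/5}$-norm, hence tends to zero. This delivers both limits $D(u_{1n}^{2},u_{2n}^{2})\to D(u_{10}^{2},u_{20}^{2})$ and $D(u_{in}^{2},u_{in}^{2})\to D(u_{i0}^{2},u_{i0}^{2})$. There is no substantive obstacle; the only point requiring attention is the exponent matching, namely that taking $p=r=6/5$ (with $t=1$) in Lemma \ref{b2} is exactly what pairs with $L^{12/5}$-convergence via $\|u^{2}\|_{L^{6/5}}=\|u\|_{L^{12/5}}^{2}$.
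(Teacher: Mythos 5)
Your proposal is correct and follows essentially the same route as the paper: the same telescoping decomposition of $D(u_{1n}^{2},u_{2n}^{2})-D(u_{10}^{2},u_{20}^{2})$, the Hardy--Littlewood--Sobolev bound with $p=r=\frac{6}{5}$, and the factorisation $u_{in}^{2}-u_{i0}^{2}=(u_{in}-u_{i0})(u_{in}+u_{i0})$ combined with H\"older in $L^{12/5}$. No gaps.
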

\begin{proof}We only prove the convergence for the mixed convolution term, the proofs for the other two are similar and we ignore them here.
Based on the convergence of $\left ( u_{1n},u_{2n} \right )$, it follows from the Hardy-Littlewood-Sobolev inequality and the H\"{o}lder inequality that we have
\begin{equation} \label{e30}
\begin{split}
&\left | D\left ( u_{1n}^{2},u_{2n}^{2}  \right )-D\left ( u_{10}^{2},u_{20}^{2}  \right ) \right | \\
=&\left | \int\limits_{\mathbb{R} ^{3} }\left ( \left | x \right |^{ -1}\ast  u_{1n}   ^{2}    \right )u_{2n}^{2}dx-\int\limits_{\mathbb{R} ^{3} }\left ( \left | x \right |^{ -1}\ast  u_{10}    ^{2}    \right )u_{20}^{2}dx \right |\\
\le& \left | \int\limits_{\mathbb{R} ^{3} }\left ( \left | x \right |^{ -1}\ast  u_{1n}   ^{2}    \right )\left ( u_{2n}^{2}-u_{20}^{2} \right ) dx \right |+\left | \int\limits_{\mathbb{R} ^{3} }\left ( \left | x \right |^{ -1}\ast \left ( u_{1n}^{2} -u_{10}^{2} \right )         \right )u_{20}^{2}dx \right |\\
\le& C\left ( \left \| u_{1n}^{2} \right \| _{\frac{6}{5} }\left \| u_{2n}^{2}-u_{20}^{2} \right \| _{\frac{6}{5} }+\left \| u_{1n}^{2}-u_{10}^{2} \right \| _{\frac{6}{5} } \left \| u_{20}^{2} \right \| _{\frac{6}{5} }  \right )\\
\le& C\left ( \left \| u_{1n} \right \| _{\frac{12}{5} }^{2}\left \| u_{2n}-u_{20} \right \| _{\frac{12}{5} }\left \| u_{2n}+u_{20} \right \| _{\frac{12}{5} }+\left \| u_{1n}-u_{10} \right \| _{\frac{12}{5} }\left \| u_{1n}+u_{10} \right \| _{\frac{12}{5} } \left \| u_{20} \right \| _{\frac{12}{5} }^{2}  \right )\\
\le& C\left ( \left \| u_{2n}-u_{20} \right \| _{\frac{12}{5} }+\left \| u_{1n}-u_{10} \right \| _{\frac{12}{5} }  \right ).
\end{split}
\end{equation}
The proof is finished by letting $n\to \infty $ in (\ref{e30}).
\end{proof}
\vspace{0.5cm}
\noindent{\bf Proof of Theorem \ref{T1}:} Our proof is divided into three parts.
\begin{enumerate}[(\romannumeral1)]
\item In the case $0< a_{1}, a_{2}< a^{\ast } $ and $0< \beta < \beta ^{\ast } $, we claim that (\ref{e1}) has at least one minimizer.
 \end{enumerate}
First, we deduce that $E_{a_{1}, a_{2},\beta}(u_{1},u_{2})$ is bounded from below in $\mathcal{M}$. Indeed, since $ \sqrt{-\triangle +m^{2} }\ge \sqrt{-\triangle }$, i.e.,
 \begin{equation} \label{e33}
\left (  \sqrt{-\triangle +m^{2} }u,u \right ) \ge \left ( \sqrt{-\triangle }u,u \right ), \quad  u\in H^{\frac{1}{2} }(\mathbb{R} ^{3} ),
\end{equation}
by rewriting the energy functional $E_{a_{1}, a_{2},\beta}(u_{1},u_{2})$ { in (\ref{e29})} and applying the refined Gagliardo-Nirenberg inequality (\ref{e27}) and Lemma \ref{b3}, we therefore obtain
  \begin{equation} \label{e51}
  \begin{split}
E_{a_{1}, a_{2},\beta}(u_{1},u_{2}) \ge& \int\limits_{\mathbb{R} ^{3} }\left(\left | (-\triangle )^{\frac{1}{4} }u_{1}  \right |^{2}+\left | (-\triangle )^{\frac{1}{4} }u_{2}  \right |^{2}\right )dx-\frac{a^{\ast }}{2}D\left ( u_{1}^{2}+u_{2}^{2},u_{1}^{2}+u_{2}^{2} \right )\\
&+\frac{1}{2}\left [ \sqrt{\left ( a^{\ast }-a_{1} \right )D\left ( u_{1}^2,u_{1}^2 \right )    }-\sqrt{\left ( a^{\ast }-a_{2} \right )D\left ( u_{2}^2,u_{2}^2 \right )   }    \right ]^2 \\
&+\sqrt{\left ( a^{\ast }-a_{1} \right )\left ( a^{\ast }-a_{2} \right )   } \left [  \sqrt{D\left ( u_{1}^2,u_{1}^2 \right )D\left ( u_{2}^2,u_{2}^2 \right )}-D\left ( u_{1}^2,u_{2}^2 \right )  \right ] \\
&+\left ( \beta ^{\ast }  -\beta \right )  D\left ( u_{1}^2,u_{2}^2 \right )+\int\limits_{\mathbb{R} ^{3} }\left ( V_{1}(x)u_{1}^{2}+V_{2}(x)u_{2}^{2} \right )dx\\
\ge& 0
    \end{split}
 \end{equation}
 holds for any $\left ( u_{1} ,u_{2}  \right )\in \mathcal{M}$. It then follows that $E_{a_{1}, a_{2},\beta}(u_{1},u_{2})$ is bounded from below in $\mathcal{M}$.
 For any given $0< a_{1}, a_{2}< a^{\ast } $ and $0< \beta < \beta ^{\ast } $, it is easily known that there exists an $a\in \left ( 0,\infty  \right )$ such that $$max\left \{ a_{1}, a_{2}  \right \}\le a< a^{\ast },\quad \beta < a+  \sqrt{\left ( a-a_{1} \right )\left ( a-a_{2} \right )  }.$$
 Let $\left \{ \left ( u_{1n},u_{2n}  \right )  \right \}\subset \mathcal{M}$ be a minimizing sequence of problem (\ref{e1}), then
\begin{equation} \label{e32}
\int\limits_{\mathbb{R} ^{3} }\left ( u_{1n}  ^{2}+\ u_{2n}  ^{2}\right )dx=1,\quad \lim_{n \to \infty } E_{a_{1}, a_{2},\beta}(u_{1n},u_{2n})=e\left ( a_{1}, a_{2},\beta \right ).
\end{equation}
Similarly as (\ref{e51}), one can deduce that
\begin{equation} \label{e34}
\begin{split}
E_{a_{1}, a_{2},\beta}(u_{1n},u_{2n})\ge& \int\limits_{\mathbb{R} ^{3} }\left(\left | (-\triangle )^{\frac{1}{4} }u_{1n}  \right |^{2}+\left | (-\triangle )^{\frac{1}{4} }u_{2n}  \right |^{2}\right)dx-\frac{a}{2}D\left ( u_{1n}^{2}+u_{2n}^{2},u_{1n}^{2}+u_{2n}^{2} \right )\\
&+\frac{1}{2}\left [ \sqrt{\left ( a-a_{1} \right )D\left ( u_{1n}^2,u_{1n}^2 \right )    }-\sqrt{\left ( a-a_{2} \right )D\left ( u_{2n}^2,u_{2n}^2 \right )   }    \right ]^2 \\
&+\sqrt{\left ( a-a_{1} \right )\left ( a-a_{2} \right )   } \left [  \sqrt{D\left ( u_{1n}^2,u_{1n}^2 \right )D\left ( u_{2n}^2,u_{2n}^2 \right )}-D\left ( u_{1n}^2,u_{2n}^2 \right )  \right ] \\
&+\left ( a+\sqrt{\left ( a-a_{1} \right )\left ( a-a_{2} \right )   }  -\beta \right )  D\left ( u_{1n}^2,u_{2n}^2 \right )\\
&+\int\limits_{\mathbb{R} ^{3} }\left ( V_{1}(x)u_{1n}^{2}+V_{2}(x)u_{2n}^{2} \right )dx \\
\ge& \left ( 1-\frac{a}{a^{\ast } } \right )  \displaystyle\int\limits_{\mathbb{R} ^{3} }\left ( \left | (-\triangle )^{\frac{1}{4} }u_{1n}  \right |^{2}+\left | (-\triangle )^{\frac{1}{4} }u_{2n}  \right |^{2} \right ) dx\\
&+\int\limits_{\mathbb{R} ^{3} }\left ( V_{1}(x)u_{1n}^{2}+V_{2}(x)u_{2n}^{2} \right )dx.
\end{split}
\end{equation}
Let $n\to \infty $ in the both side of the above inequality, we then conclude from (\ref{e32}) that the minimizing sequence $\left \{ \left ( u_{1n},u_{2n}  \right )  \right \}$ is bounded in $\mathcal{H} _{1}\times \mathcal{H} _{2}$. From Lemma \ref{b1}, passing to a subsequence if necessary, we infer there exists $\left ( u_{10},u_{20}  \right )\in \mathcal{H} _{1}\times \mathcal{H} _{2}$ such that
\begin{equation} \label{e35}
\begin{cases}
\left ( u_{1n},u_{2n}  \right ) \rightharpoonup  \left ( u_{10},u_{20}  \right ) \quad {\rm weakly \ in}\  \mathcal{H} _{1}\times \mathcal{H} _{2},\\
\left ( u_{1n},u_{2n}  \right ) \to \left ( u_{10},u_{20}  \right ) \quad {\rm strongly \ in}\  L^{q}\left ( \mathbb{R} ^{3}  \right )\times L^{q}\left ( \mathbb{R} ^{3}  \right ), \  2\le q< 3.
\end{cases}
\end{equation}
Therefore, we conclude that $$\lim_{n \to \infty } \int\limits_{\mathbb{R} ^{3} }\left ( u_{1n}   ^{2}+ u_{2n}   ^{2}\right )dx=1=\int\limits_{\mathbb{R} ^{3} }\left ( u_{10}  ^{2}+ u_{20}   ^{2}\right )dx, \ {\rm i.e.,}\ \left ( u_{10},u_{20} \right ) \in \mathcal{M}.$$
Furthermore, it follows from the weak lower semi-continuity of $\left (  \sqrt{-\triangle +m^{2} }u,u \right )$ and Lemma \ref{b4} that
$$ e\left ( a_{1}, a_{2}, \beta   \right )\le E_{a_{1}, a_{2}, \beta}\left ( u_{10}, u_{20} \right )\le \lim_{n \to \infty } E_{a_{1}, a_{2}, \beta}\left ( u_{1n}, u_{2n} \right )=e\left ( a_{1}, a_{2}, \beta   \right ),$$
which implies that $\left ( u_{10}, u_{20} \right )$ is a minimizer of $e\left ( a_{1}, a_{2}, \beta   \right )$.

\begin{enumerate}[(\romannumeral2)]
\item In the case  $a_{1}> a^{\ast }$ or $a_{2}> a^{\ast }$ or $\beta> \beta ^{\ast }$, we infer that (\ref{e1}) has no minimizers .
 \end{enumerate}
 Let $\varphi \in C_{0}^{\infty }\left ( \mathbb{R} ^{3}  \right )$ be a cut-off function satisfying
 $$\varphi \left ( x \right )\equiv 1\ \ {\rm for}\ \left | x \right |\le 1, \ \varphi \left ( x \right )\equiv0 \ \ {\rm for}\  \left | x \right |\ge 2,\ 0\le \varphi \left ( x \right )\le 1 \ {\rm for}\ \forall x\in \mathbb{R} ^{3}.$$
 Besides, for any $\sigma > 0$, set
 \begin{equation} \label{e36}
\begin{cases}
\phi _{1\sigma }\left ( x \right ):=\frac{\sqrt{\theta }A_{\sigma }  }{\left \| Q \right \| _{2} }\sigma ^{\frac{3}{2} }\varphi \left ( x-x_{0}  \right )Q\left ( \sigma \left ( x-x_{0} \right )  \right ),\\
\phi _{2\sigma }\left ( x \right ):=\frac{\sqrt{1-\theta }A_{\sigma }  }{\left \| Q \right \| _{2} }\sigma ^{\frac{3}{2} }\varphi \left ( x-x_{0}  \right )Q\left ( \sigma \left ( x-x_{0} \right )  \right ),
\end{cases}
\end{equation}
where $A_{\sigma }>0$ is chosen such that $\left ( \phi _{1\sigma },\phi _{2\sigma } \right )\in \mathcal{M} $, $\theta \in \left [ 0,1 \right ]$ and $x_{0}\in \mathbb{R} ^{3}$ will be determined latter. Since $\left ( \phi _{1\sigma },\phi _{2\sigma } \right )\in \mathcal{M} $, we have $\displaystyle \int\limits_{\mathbb{R} ^{3} }\left (\left | \phi _{1\sigma } \right | ^{2}+\left | \phi _{2\sigma } \right | ^{2}\right )dx=1 $, that is
\begin{equation} \label{e37}
1=\int\limits_{\mathbb{R} ^{3} }\frac{A_{\sigma }^2  }{\left \| Q \right \| _{2}^2 }\sigma ^{3 }\varphi^2 \left ( x-x_{0}  \right )Q^2\left ( \sigma \left ( x-x_{0} \right )  \right )dx=\int\limits_{\mathbb{R} ^{3} }\frac{A_{\sigma }^2  }{\left \| Q \right \| _{2}^2 }\varphi^2 \left ( \sigma ^{-1} x  \right )Q^2\left ( x  \right )dx.
\end{equation}
From (\ref{e9}) and (\ref{e37}), one can deduce
\begin{equation} \label{e38}
\begin{split}
\left | 1-A_{\sigma }^{2}  \right |=&\left | \int\limits_{\mathbb{R} ^{3} }\frac{A_{\sigma }^2  }{\left \| Q \right \| _{2}^2 }Q^{2}\left ( x \right )\left ( \varphi ^2\left ( \sigma ^{-1} x \right )-1  \right )dx     \right |\\
\le& \int\limits_{\mathbb{R} ^{3}\setminus B_{\sigma }  }\left | \frac{A_{\sigma }^2  }{\left \| Q \right \| _{2}^2 }Q^{2}\left ( x \right )\left ( \varphi ^2\left ( \sigma ^{-1} x \right )-1  \right ) \right | dx\\
\le& C\int\limits_{\mathbb{R} ^{3}\setminus B_{\sigma }  }\left | x\right |^{-8} dx\le C\sigma^{-5},
\end{split}
\end{equation}
as $\sigma$ large enough, where $B_{\sigma }:=\left \{ x\in \mathbb{R} ^{3}\mid \left | x \right |< \sigma    \right \} $.
Hence we obtain
\begin{equation} \label{e39}
1-O\left ( \sigma ^{-5}  \right ) \le A_{\sigma }^2 \le 1+ O\left ( \sigma ^{-5}  \right )\quad {\rm as}\ \sigma \to \infty.
\end{equation}
Now, let us turn to estimate the non-local terms in $E_{a _{1}, a_{2},\beta}(\phi _{1\sigma },\phi _{2\sigma })$, i.e., the terms with pseudo-relativistic operator $\sqrt{-\triangle+m^{2} }$ and the terms with Hartree nonlinearities. Denote $\varphi _{\sigma }= \varphi (\sigma ^{-1}x )$ and apply (\ref{e700}) and (\ref{e701}) to the test function $\phi _{1\sigma }$ with scaling coefficient $\sigma$,  we thus obtain
 \begin{equation} \label{e702}
  \begin{split}
  \int\limits_{\mathbb{R} ^{3} } \phi _{1\sigma }\sqrt{-\triangle +m^{2} }\phi _{1\sigma }dx=\frac{\theta A_{\sigma }^{2}}{\left \| Q \right \| _{2}^2}\sigma \int\limits_{\mathbb{R} ^{3} }\varphi _{\sigma} Q(x)\sqrt{-\triangle +\sigma  ^{-2}m^{2} }\varphi _{\sigma}Q(x)dx.
    \end{split}
   \end{equation}
Based on the operator inequality $\sqrt{-\triangle +\sigma  ^{-2}m^{2} }\le \sqrt{-\triangle  }+\frac{1}{2}\sigma ^{-2}m^{2}\left ( -\triangle \right ) ^{-\frac{1}{2} } $ and notice the estimates (see \cite{Bona16})
 \begin{equation} \label{e703}
 \int\limits_{\mathbb{R} ^{3} }\varphi _{\sigma}Q(x)\sqrt{-\triangle }\varphi _{\sigma}Q(x)dx\le\int\limits_{\mathbb{R} ^{3} }Q(x)\sqrt{-\triangle }Q(x)dx+ C\sigma ^{-\frac{7}{2} },
  \end{equation}
  and
  \begin{equation} \label{e704}
 \int\limits_{\mathbb{R} ^{3} }\varphi _{\sigma}Q(x)\left ( -\triangle \right )^{-\frac{1}{2} }  \varphi _{\sigma}Q(x)dx\le\int\limits_{\mathbb{R} ^{3} }Q(x)\left ( -\triangle \right )^{-\frac{1}{2} }Q(x)dx,
  \end{equation}
one can infer from (\ref{e21}), (\ref{e39})-(\ref{e704}) that
 \begin{equation} \label{e40}
 \begin{split}
 \int\limits_{\mathbb{R} ^{3} } \phi _{1\sigma }\sqrt{-\triangle +m^{2} }\phi _{1\sigma }dx \le& \frac{\theta \sigma }{\left \| Q \right \| _{2} ^{2} } \int\limits_{\mathbb{R} ^{3} }Q\sqrt{-\triangle }Qdx+O\left ( \sigma ^{-\frac{5}{2} }  \right )+  m^{2}O\left ( \sigma ^{-1}  \right )\\
 =&\theta \sigma + O\left ( \sigma ^{-\frac{5}{2} }  \right )+  m^{2}O\left ( \sigma ^{-1}  \right ).
 \end{split}
 \end{equation}
Apply the same steps to the test function $\phi  _{2\sigma }$, one can also obtain
 \begin{equation} \label{e41}
  \begin{split}
 \int\limits_{\mathbb{R} ^{3} } \phi _{2\sigma }\sqrt{-\triangle +m^{2} }\phi _{2\sigma }dx \le& \frac{\left ( 1-\theta  \right ) \sigma }{\left \| Q \right \| _{2} ^{2} } \int\limits_{\mathbb{R} ^{3} }Q\sqrt{-\triangle }Qdx+O\left ( \sigma ^{-\frac{5}{2} }  \right )+  m^{2}O\left ( \sigma ^{-1}  \right )\\
 =&\left ( 1-\theta \right )  \sigma + O\left ( \sigma ^{-\frac{5}{2} }  \right )+  m^{2}O\left ( \sigma ^{-1}  \right ).
  \end{split}
 \end{equation}
 For Hartree nonlinear terms, we first claim
 \begin{equation} \label{e708}
 \frac{ 2\theta ^{2}\sigma} { a^{\ast}}-O\left ( \sigma ^{-4}  \right ) \le D\left ( \phi _{1\sigma } ^2,\phi _{1\sigma }^2 \right )\le \frac{ 2\theta ^{2}\sigma} { a^{\ast}}+O\left ( \sigma ^{-4}  \right ).
   \end{equation}
   Indeed, according to scaling property of $D\left ( \phi _{1\sigma } ^2,\phi _{1\sigma }^2 \right )$, one can infer
   \begin{equation} \label{e705}
D\left ( \phi _{1\sigma } ^2,\phi _{1\sigma }^2 \right )
= \frac{ \theta ^{2}A_{\sigma}^{4}} { \left \| Q \right \| _{2}^{4}}\sigma D\left ( \varphi _{\sigma } ^2Q^{2},\varphi _{\sigma } ^2Q^{2} \right ).
  \end{equation}
 From Newton’s theorem (see \cite{Lenzmann}), we have
    \begin{equation} \label{e706}
 \left | x \right |^{ -1}\ast  Q ^{2}    \le  \frac{\left \| Q \right \|_{2}^{2}}{\left | x \right | },
   \end{equation}
   which implies that
  \begin{equation} \label{e707}
  \begin{split}
 &\left | D\left ( Q ^2,\left ( \varphi _{\sigma } ^2-1 \right ) Q^{2} \right )+D\left (\varphi _{\sigma } ^2 Q ^2,\left ( \varphi _{\sigma } ^2-1 \right ) Q^{2} \right ) \right |\\
 \le& 2D\left ( Q ^2,\left (1- \varphi _{\sigma } ^2 \right ) Q^{2} \right )= 2 \int\limits_{\mathbb{R} ^{3} }\left ( \left | x \right |^{ -1}\ast  Q ^{2}  \right )\left (1- \varphi _{\sigma } ^2 \right ) Q^{2}dx  \\
\le& 2\left \| Q \right \|_{2}^{2}\int\limits_{\mathbb{R} ^{3} }\frac{\left (1- \varphi _{\sigma } ^2 \right ) Q^{2}(x)}{\left | x \right | }dx=  2\left \| Q \right \|_{2}^{2}\int\limits_{\mathbb{R} ^{3}\setminus B_{\sigma }  }\frac{Q^{2}(x)}{\left | x \right | }dx\\
\le& C\int\limits_{\mathbb{R} ^{3}\setminus B_{\sigma }  }\left | x\right |^{-8} dx\\
\le& C\sigma^{-5},
    \end{split}
 \end{equation}
as $\sigma$ large enough. In fact, it holds
   \begin{equation} \label{e709}
    D\left ( \varphi _{\sigma } ^2Q^{2},\varphi _{\sigma } ^2Q^{2} \right )=D\left ( Q ^2,Q^{2} \right )+D\left ( Q ^2,\left ( \varphi _{\sigma } ^2-1 \right ) Q^{2} \right )+D\left (\varphi _{\sigma } ^2 Q ^2,\left ( \varphi _{\sigma } ^2-1 \right ) Q^{2} \right ),
 \end{equation}
hence we further deduce
    \begin{equation} \label{e710}
  D\left ( Q ^2,Q^{2} \right )-C\sigma^{-5}\le  D\left ( \varphi _{\sigma } ^2Q^{2},\varphi _{\sigma } ^2Q^{2} \right )  \\
\le D\left ( Q ^2,Q^{2} \right )+C\sigma^{-5}.
 \end{equation}
 Combining (\ref{e39}), (\ref{e705}) and (\ref{e710}), we thus conclude
 \begin{equation} \label{e42}
\frac{\theta ^{2}\sigma  }{\left \| Q \right \| _{2}^{4} }D\left ( Q^{2},Q^{2}  \right )-O\left ( \sigma ^{-4}  \right )
 \le D\left ( \phi _{1\sigma } ^2,\phi _{1\sigma }^2 \right )
 \le \frac{\theta ^{2}\sigma  }{\left \| Q \right \| _{2}^{4} }D\left ( Q^{2},Q^{2}  \right )+O\left ( \sigma ^{-4}  \right ),
 \end{equation}
 which shows from (\ref{e21}) that claim (\ref{e708}) is true. Note that, in a similar way, we can also obtain the estimates of $D\left ( \phi _{2\sigma } ^2,\phi _{2\sigma }^2 \right )$ and $D\left ( \phi _{1\sigma } ^2,\phi _{2\sigma }^2 \right )$  respectively that
  \begin{equation} \label{e43}
 \frac{ 2\left ( 1-\theta \right )  ^{2}\sigma} { a^{\ast}}-O\left ( \sigma ^{-4}  \right )\le D\left ( \phi _{2\sigma } ^2,\phi _{2\sigma }^2 \right )\le \frac{ 2\left ( 1-\theta \right )  ^{2}\sigma} { a^{\ast}}+O\left ( \sigma ^{-4}  \right ),
 \end{equation}
 \begin{equation} \label{e44}
 \frac{ 2\theta\left ( 1-\theta \right )  \sigma} { a^{\ast}}-O\left ( \sigma ^{-4}  \right )\le D\left ( \phi _{1\sigma } ^2,\phi _{2\sigma }^2 \right )\le \frac{ 2\theta\left ( 1-\theta \right ) \sigma} { a^{\ast}}+O\left ( \sigma ^{-4}  \right ).
 \end{equation}
Furthermore, since the function $x\to V_{i}\left ( x \right )\varphi ^{2}\left ( x-x_{0}  \right )(i=1,2)$ is bounded, by the convergence theorem, we infer
  \begin{equation} \label{e45}
  \begin{split}
 \lim_{\sigma  \to \infty } \int\limits_{\mathbb{R} ^{3} }V_{1}\left ( x \right )\phi _{1\sigma }^2dx&=\lim_{\sigma  \to \infty } \int\limits_{\mathbb{R} ^{3} }\frac{\theta A_{\sigma }^{2}   }{\left \| Q \right \| _{2}^{2} }\sigma ^{3}V_{1}\left ( x \right )\varphi ^{2}\left ( x-x_{0}  \right ) Q ^{2}\left ( \sigma \left ( x-x_{0}  \right )   \right )dx \\
&= \lim_{\sigma  \to \infty } \int\limits_{\mathbb{R} ^{3} }\frac{\theta A_{\sigma }^{2}   }{\left \| Q \right \| _{2}^{2} }V_{1}\left ( \sigma ^{-1}x+x_{0}  \right )\varphi ^{2}\left ( \sigma ^{-1}x  \right ) Q ^{2}\left ( x  \right )dx\\
&= \frac{\theta    }{\left \| Q \right \| _{2}^{2} } V_{1}\left ( x_{0}  \right )\int\limits_{\mathbb{R} ^{3} }Q^{2}\left ( x \right )dx=\theta V_{1}\left ( x_{0}  \right ),
    \end{split}
 \end{equation}
 and similarly we can also obtain
 \begin{equation} \label{e46}
\lim_{\sigma  \to \infty } \int\limits_{\mathbb{R} ^{3} }V_{2}\left ( x \right )\phi _{2\sigma }^2dx=\left ( 1-\theta \right )  V_{2}\left ( x_{0}  \right ).
 \end{equation}
 Then, it follows from (\ref{e29}), (\ref{e40})-(\ref{e708}) and (\ref{e43})-(\ref{e46}) that
   \begin{equation} \label{e47}
  \begin{split}
 E_{a _{1}, a_{2},\beta}(\phi _{1\sigma },\phi _{2\sigma })=& \sum_{i=1}^{2} \int\limits_{\mathbb{R} ^{3} }\left[\phi _{i\sigma }\sqrt{- \triangle +  m^{2} }\phi _{i\sigma }+V_{i}(x)\phi _{i\sigma }^{2}\right]dx \\
&-\frac{1}{2}\left [ a_{1}D\left ( \phi _{1\sigma } ^{2},\phi _{1\sigma } ^{2}  \right )+ a_{2}D\left ( \phi _{2\sigma } ^{2},\phi _{2\sigma } ^{2}  \right )+2\beta D\left ( \phi _{1\sigma } ^{2},\phi _{2\sigma } ^{2}  \right )  \right ] \\
\le&\sigma -\frac{ a_{1} \theta ^{2}} { a^{\ast}}\sigma-\frac{ a_{2} \left ( 1-\theta \right )  ^{2}} { a^{\ast}}\sigma -\frac{ 2\beta  \theta \left ( 1-\theta \right ) } { a^{\ast}}\sigma +O\left ( \sigma ^{-\frac{5}{2} }  \right )+2m^{2}O\left ( \sigma ^{-1}  \right )\\
 &+\int\limits_{\mathbb{R} ^{3} }V_{1}(x)\phi _{1\sigma }^{2}dx+\int\limits_{\mathbb{R} ^{3} }V_{2}(x)\phi _{2\sigma }^{2}dx\\
 =&\frac{\sigma}{a^{\ast }} \left [ a^{\ast } - a_{1} \theta ^{2}-a_{2} \left ( 1-\theta \right )  ^{2} -2\beta  \theta \left ( 1-\theta \right )  \right ] +O\left ( \sigma ^{-\frac{5}{2} }  \right )+2m^{2}O\left ( \sigma ^{-1}  \right )\\
 &+\theta V_{1}\left ( x_{0}  \right )+\left ( 1-\theta  \right )V_{2}\left ( x_{0}  \right ).
    \end{split}
 \end{equation}
Next, we finish the proof of (ii) by considering the following three cases.\\
 \noindent{\bf Case 1:} $a_{1}> a^{\ast }$. One can obtain from (\ref{e47}) by taking $\theta =1$ and $\forall x_{0}\in \mathbb{R} ^{3}$ that
$$ E_{a _{1}, a_{2},\beta}(\phi _{1\sigma },\phi _{2\sigma })\le \frac{\sigma}{a^{\ast }} \left ( a^{\ast } - a_{1} \right )+V_{1}\left ( x_{0}  \right )+O\left ( \sigma ^{-\frac{5}{2} }  \right )+2m^{2}O\left ( \sigma ^{-1}  \right ) \to -\infty \quad {\rm as}\ \sigma \to \infty ,$$
which implies $e\left ( a _{1}, a_{2},\beta \right )=-\infty$ in this case. Therefore, if $a_{1}> a^{\ast }$, we deduce that there is no minimizer of (\ref{e1}).\\
\noindent{\bf Case 2:} $a_{2}> a^{\ast }$. By taking $\theta =0$ and $\forall x_{0}\in \mathbb{R} ^{3}$ in (\ref{e47}), one can infer
$$E_{a _{1}, a_{2},\beta}(\phi _{1\sigma },\phi _{2\sigma })\le \frac{\sigma}{a^{\ast }} \left ( a^{\ast } - a_{2} \right )+V_{2}\left ( x_{0}  \right )+O\left ( \sigma ^{-\frac{5}{2} }  \right )+2m^{2}O\left ( \sigma ^{-1}  \right ) \to -\infty \quad {\rm as}\ \sigma \to \infty, $$
which indicates $e\left ( a _{1}, a_{2},\beta \right )=-\infty$. Hence, if $a_{2}> a^{\ast }$, we conclude that there is no minimizer of (\ref{e1}).\\
\noindent{\bf Case 3:} $\beta > \beta ^{\ast }$. Without losing generality, one can suppose $ 0< a_{1}, a_{2}< a^{\ast }$ in this case. From (\ref{e48}), one can easily get that
 $$a^{\ast }-a _{1}\gamma ^{2}-a_{2} \left ( 1-\gamma \right )  ^{2}=2 \beta ^{\ast }\gamma \left ( 1-\gamma  \right ) .$$
 Let $\theta = \gamma $ and $\forall x_{0}\in \mathbb{R} ^{3}$ in (\ref{e47}), we therefore deduce that
 \begin{equation} \label{e49}
  \begin{split}
 E_{a _{1}, a_{2},\beta}(\phi _{1\sigma },\phi _{2\sigma })\le& \frac{2\sigma}{a^{\ast }} \gamma \left ( 1-\gamma  \right ) \left ( \beta ^{\ast } - \beta  \right )+\gamma V_{1}\left ( x_{0}  \right )+\left ( 1-\gamma  \right ) V_{2}\left ( x_{0}  \right )\\
 &+O\left ( \sigma ^{-\frac{5}{2} }  \right )+2m^{2}O\left ( \sigma ^{-1}  \right ) \to -\infty \quad {\rm as}\ \sigma \to \infty,
    \end{split}
 \end{equation}
 which shows $e\left ( a _{1}, a_{2},\beta \right )=-\infty$ as well. We thus derive, if $\beta > \beta ^{\ast }$, that there is no minimizer of (\ref{e1}).
\begin{enumerate}[(\romannumeral3)]
\item Finally, we prove that there is no minimizer of (\ref{e1}) in the case  $a_{1}= a^{\ast }$ or $a_{2}= a^{\ast }$ or $\beta= \beta ^{\ast }$.
 \end{enumerate}
 According to $\left ( {\rm ii} \right ) $, one can assume that $0< a_{1}, a_{2}\le a^{\ast }$ and $0< \beta \le \beta ^{\ast }$. From the assumption $\left ( \Omega  \right )$, we know that
 \begin{equation} \label{e50}
 \mathcal{V}=\left \{ x\in \mathbb{R} ^{3}:V_{1}\left ( x \right )=V_{2}\left ( x \right )=0      \right \}\ne \emptyset .
  \end{equation}
 Therefore, we can choose $x_{0}\in  \mathcal{V} $ such that $V_{1}\left (x_{0} \right )=V_{2}\left ( x_{0} \right )=0 $.
 It then follows from (\ref{e51}) and (\ref{e47}) that
  \begin{equation} \label{e52}
    \begin{split}
  0\le e\left ( a_{1}, a_{2},\beta \right )\le E_{a _{1}, a_{2},\beta}(\phi _{1\sigma },\phi _{2\sigma })\le
&\frac{\sigma}{a^{\ast }} \left [ a^{\ast } - a_{1} \theta ^{2}-a_{2} \left ( 1-\theta \right )  ^{2} -2\beta  \theta \left ( 1-\theta \right )  \right ] \\
&+O\left ( \sigma ^{-\frac{5}{2} }  \right )+2m^{2}O\left ( \sigma ^{-1}  \right ).
  \end{split}
  \end{equation}
  We next discuss the following three cases similarly as part $\left ( {\rm ii} \right ) $.\\
\noindent{\bf Case 1:}  $a_{1}= a^{\ast }$, $0< a_{2}\le a^{\ast }$ and $0< \beta \le \beta ^{\ast }$. Let $\theta =1$ in  (\ref{e52}) and $\sigma \to \infty $, we then obtain
  \begin{equation} \label{e53}
 0\le e\left ( a^{\ast }, a_{2},\beta \right )\le\lim_{\sigma  \to \infty }  E_{a ^{\ast }, a_{2},\beta}(\phi _{1\sigma },\phi _{2\sigma })=0.
   \end{equation}
   Next, we argue by contradiction. Suppose $e\left ( a^{\ast }, a_{2},\beta \right )$ has a minimizer $\left ( u_{10},u_{20} \right )$. From (\ref{e51}) and (\ref{e53}), we infer that
 \begin{equation} \label{e54}
   \int\limits_{\mathbb{R} ^{3} }\left(\left | (-\triangle )^{\frac{1}{4} }u_{10}  \right |^{2}+\left | (-\triangle )^{\frac{1}{4} }u_{20}  \right |^{2}\right )dx=\frac{a^{\ast }}{2}D\left ( u_{10}^{2}+u_{20}^{2},u_{10}^{2}+u_{20}^{2} \right ),
 \end{equation}
 \begin{equation} \label{e55}
 \int\limits_{\mathbb{R} ^{3} } V_{1}(x)u_{10}^{2}dx=\int\limits_{\mathbb{R} ^{3} }V_{2}(x)u_{20}^{2} dx=0  .
  \end{equation}
 One can deduce from Lemma \ref{le2} and  the equality  (\ref{e54}) that
 \begin{equation} \label{e56}
\left ( u_{10}(x), u_{20}(x) \right )=\left ( \tau sin\theta  Q\left (\alpha x+\eta  \right ) ,\tau cos\theta Q(\alpha x+\eta ) \right )
	\end{equation}
 with some $\tau>0, \alpha>0, \theta \in \left [ 0,2\pi  \right )$ and $\eta \in \mathbb{R} ^{3}$. However, (\ref{e55}) shows that there is a compact support of $(u_{10},u_{20})$, which contradicts with (\ref{e56}). Therefore, in this case, there is no minimizer of $e\left ( a^{\ast }, a_{2},\beta \right )$.\\
 \noindent{\bf Case 2:} $a_{2}= a^{\ast }$, $0< a_{1}\le a^{\ast }$ and $0< \beta \le \beta ^{\ast }$. Let $\theta =0$ in  (\ref{e52}) and $\sigma \to \infty $, we then have
 \begin{equation} \label{e57}
 0\le e\left ( a_{1 }, a^{\ast},\beta \right )\le\lim_{\sigma  \to \infty }  E_{a_{1}, a^{\ast},\beta}(\phi _{1\sigma },\phi _{2\sigma })=0,
   \end{equation}
   which shows that there is no minimizer of $e\left ( a_{1 }, a^{\ast},\beta \right )$ by arguing similarly as above.\\
   \noindent{\bf Case 3:} $\beta= \beta^{\ast }$ and $0< a_{1}, a_{2}\le a^{\ast }$. Let $\theta =\gamma $ in  (\ref{e52}) and $\sigma \to \infty $, we then get
 \begin{equation} \label{e58}
 0\le e\left ( a_{1 }, a_{2 },\beta^{\ast} \right )\le\lim_{\sigma  \to \infty }  E_{a _{1 }, a_{2},\beta^{\ast }}(\phi _{1\sigma },\phi _{2\sigma })=0,
   \end{equation}
   which implies that there is no minimizer of $e\left ( a_{1 }, a_{2 },\beta^{\ast} \right )$ by arguing similarly as above.\\

Furthermore, in the case  $0< a_{1}, a_{2}< a^{\ast } $ and $0< \beta < \beta ^{\ast } $, let $\theta =\gamma $  and $\sigma = \left ( \beta ^{\ast }-\beta \right )   ^{-\frac{1}{2} }$ in (\ref{e52}), one can infer
 \begin{equation} \label{e59}
 0\le e\left ( a_{1}, a_{2},\beta \right )\le \frac{2\sigma}{a^{\ast }} \gamma \left ( 1-\gamma  \right ) \left ( \beta ^{\ast } - \beta  \right )
 +O\left ( \sigma ^{-\frac{5}{2} }  \right )+2m^{2}O\left ( \sigma ^{-1}  \right ) \to 0 \quad {\rm as}\ \beta \to \beta^{\ast}.
    \end{equation}
Combining this and (\ref{e58}), we thus deduce that
     \begin{equation} \label{e67}
    \lim_{\beta  \to \beta ^{\ast } }  e\left ( a_{1}, a_{2},\beta \right )=e\left ( a_{1}, a_{2},\beta^{\ast } \right )=0, \quad  0< a_{1}, a_{2}< a^{\ast },
    \end{equation}
    which completes the proof of Theorem \ref{T1}.\\

\section{General concentration behavior}
In this section, we are devoted to the concentration behavior of the minimizers for (\ref{e1}) in the case $0< a_{1}, a_{2}< a^{\ast } $ and  $0<\beta < \beta ^{\ast } $ under general trapping potentials. For this purpose, we start with the following two lemmas.

\begin{lemma} \label{le3}
Suppose $\left ( \Omega  \right )$ holds and let $\left ( u_{1\beta }, u_{2\beta }  \right )$ be a nonnegative minimizer of $e\left ( a_{1}, a_{2},\beta \right )$ for $0< a_{1}, a_{2}< a^{\ast } $ and $0<\beta < \beta ^{\ast } $, then it holds
\begin{enumerate}[(\romannumeral1)]
\item  $\left ( u_{1\beta }, u_{2\beta }  \right )$ blows up as $\beta  \to \beta ^{\ast } $, i.e., for $i=1,2$,
 \begin{equation} \label{e60}
 \lim_{\beta  \to \beta ^{\ast } } \int\limits_{\mathbb{R} ^{3} }\left | (-\triangle )^{\frac{1}{4} }u_{i\beta }   \right | ^{2}dx=\infty \ {\rm and} \ \lim_{\beta  \to \beta ^{\ast } } \iint\limits_{\mathbb{R} ^{3}\times \mathbb{R} ^{3}}\frac{u_{i\beta } ^{2}(x)u_{i\beta } ^{2}(y) }{\left | x-y \right | }dxdy=\infty.
  \end{equation}
\item $\left ( u_{1\beta }, u_{2\beta }  \right )$ also satisfies
\begin{equation} \label{e61}
\lim_{\beta  \to \beta ^{\ast } } \int\limits_{\mathbb{R} ^{3} }V_{1}(x)u_{1\beta }^{2}= \lim_{\beta  \to \beta ^{\ast } } \int\limits_{\mathbb{R} ^{3} }V_{2}(x)u_{2\beta }^{2}=0,
\end{equation}
\begin{equation} \label{e62}
\lim_{\beta  \to \beta ^{\ast } }\left [ \left ( \beta ^{\ast }-\beta   \right )D\left ( u_{1\beta }^{2},u_{2\beta }^{2} \right )   \right ]=0,
\end{equation}
\begin{equation} \label{e63}
\lim_{\beta  \to \beta ^{\ast } } \left [ \sqrt{\left ( a^{\ast }-a_{1}   \right ) D\left ( u_{1\beta }^{2},u_{1\beta }^{2} \right )}-\sqrt{\left ( a^{\ast }-a_{2}   \right ) D\left ( u_{2\beta }^{2},u_{2\beta }^{2} \right )} \right ]^{2}=0,
\end{equation}
\begin{equation} \label{e64}
\lim_{\beta  \to \beta ^{\ast } } \left [ \sqrt{ D\left ( u_{1\beta }^{2},u_{1\beta }^{2} \right )D\left ( u_{2\beta }^{2},u_{2\beta }^{2} \right )}-D\left ( u_{1\beta }^{2},u_{2\beta }^{2} \right )  \right ]=0,
\end{equation}
as well as
\begin{equation} \label{e65}
\lim_{\beta  \to \beta ^{\ast } }  \frac{\displaystyle \int_{\mathbb{R} ^{3} }  \left ( \left | (-\triangle )^{\frac{1}{4} }u_{1\beta }   \right | ^{2}+\left | (-\triangle )^{\frac{1}{4} }u_{2\beta }   \right | ^{2} \right ) dx}{ D\left ( u_{1\beta }^{2}+u_{2\beta }^{2},u_{1\beta }^{2}+u_{2\beta }^{2} \right )}=\frac{a^{\ast }}{2},
\end{equation}
\begin{equation} \label{e66}
\lim_{\beta  \to \beta ^{\ast } }  \frac{D\left ( u_{1\beta }^{2},u_{1\beta }^{2} \right )}{ D\left ( u_{2\beta }^{2},u_{2\beta }^{2} \right )}=\frac{a^{\ast}-a_{2}  }{a^{\ast}-a_{1}  }.
\end{equation}
\end{enumerate}
\end{lemma}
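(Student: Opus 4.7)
The plan is to extract everything from the chain of nonnegative lower bounds on $E_{a_1,a_2,\beta}(u_{1\beta},u_{2\beta})$ supplied by (\ref{e51}), exploiting the fact $\lim_{\beta\to\beta^*}e(a_1,a_2,\beta)=e(a_1,a_2,\beta^*)=0$ established in Theorem \ref{T1}. First I evaluate (\ref{e51}) on the minimizer. Using the constraint $\int(u_{1\beta}^2+u_{2\beta}^2)\,dx=1$, the refined Gagliardo-Nirenberg inequality (\ref{e27}) makes the ``kinetic minus $\tfrac{a^*}{2}D$'' part nonnegative; the middle bracket is a perfect square; Lemma \ref{b3} renders the cross term $\sqrt{D(u_1^2,u_1^2)D(u_2^2,u_2^2)}-D(u_1^2,u_2^2)$ nonnegative; the factor $\beta^*-\beta$ is positive in the regime $\beta<\beta^*$; and the trapping potential terms are nonnegative by $(\Omega)$. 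Since the left-hand side tends to $0$, every summand on the right must vanish in the limit, which at once yields (\ref{e61})--(\ref{e64}) together with the following auxiliary identity, call it $(\star)$:
\begin{equation*}
\lim_{\beta\to\beta^*}\left[\int\limits_{\mathbb{R}^3}\bigl(|(-\triangle)^{1/4}u_{1\beta}|^2+|(-\triangle)^{1/4}u_{2\beta}|^2\bigr)\,dx-\frac{a^*}{2}D\bigl(u_{1\beta}^2+u_{2\beta}^2,\,u_{1\beta}^2+u_{2\beta}^2\bigr)\right]=0.
\end{equation*}

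For the blow-up (i) I argue by contradiction: suppose along some sequence $\beta_n\to\beta^*$ the total kinetic energy $\int(|(-\triangle)^{1/4}u_{1\beta_n}|^2+|(-\triangle)^{1/4}u_{2\beta_n}|^2)\,dx$ stays bounded. Combined with (\ref{e61}), this yields boundedness in $\mathcal H_1\times\mathcal H_2$, so Lemma \ref{b1} supplies a subsequence converging weakly in $\mathcal H_1\times\mathcal H_2$ and strongly in $L^q(\mathbb{R}^3)\times L^q(\mathbb{R}^3)$ for every $q\in[2,3)$, hence in particular in $L^{12/5}$, to some limit $(u_{10},u_{20})$. Strong $L^2$ convergence preserves the mass constraint, so $(u_{10},u_{20})\in\mathcal M$; Fatou's lemma together with (\ref{e61}) forces $\int V_i u_{i0}^2\,dx=0$; the pseudo-relativistic quadratic form is weakly lower semicontinuous on $H^{1/2}$; and Lemma \ref{b4} passes the Hartree terms to the limit. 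Taking the liminf produces $E_{a_1,a_2,\beta^*}(u_{10},u_{20})\le 0=e(a_1,a_2,\beta^*)$, so $(u_{10},u_{20})$ would be a minimizer at $\beta^*$, contradicting Theorem \ref{T1}(ii). Hence the total kinetic energy diverges. To separate the two components, I invoke (\ref{e63}) together with Lemma \ref{b3}: if $D(u_{2\beta}^2,u_{2\beta}^2)$ stayed bounded along a subsequence, so would $D(u_{1\beta}^2,u_{1\beta}^2)$ (from (\ref{e63}) and the fact that $a^*-a_1,a^*-a_2>0$), hence by Lemma \ref{b3} also $D(u_{1\beta}^2,u_{2\beta}^2)$, and therefore the whole $D(u_{1\beta}^2+u_{2\beta}^2,u_{1\beta}^2+u_{2\beta}^2)$; but then $(\star)$ would bound the total kinetic energy, a contradiction. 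Thus both $D(u_{i\beta}^2,u_{i\beta}^2)\to\infty$, and the classical Gagliardo-Nirenberg inequality (\ref{e17}) forces $\int|(-\triangle)^{1/4}u_{i\beta}|^2\,dx\to\infty$ for each $i$, giving (\ref{e60}).

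The two ratio limits follow by simple division. For (\ref{e65}), the divergence of both $D(u_{i\beta}^2,u_{i\beta}^2)$ together with (\ref{e27}) makes $D(u_{1\beta}^2+u_{2\beta}^2,u_{1\beta}^2+u_{2\beta}^2)\to\infty$; dividing $(\star)$ by this quantity yields the desired limit $a^*/2$. For (\ref{e66}), I divide (\ref{e63}) by $\sqrt{D(u_{2\beta}^2,u_{2\beta}^2)}$ and use that both $D_i\to\infty$ to obtain $\sqrt{(a^*-a_1)D(u_{1\beta}^2,u_{1\beta}^2)/D(u_{2\beta}^2,u_{2\beta}^2)}\to\sqrt{a^*-a_2}$, whose square is (\ref{e66}). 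The main technical delicacy lies in the blow-up step, where weak lower semicontinuity of the pseudo-relativistic form, strong $L^{12/5}$ convergence from Lemma \ref{b1}, and the nonattainment of $e(a_1,a_2,\beta^*)=0$ from Theorem \ref{T1}(ii) must be coordinated simultaneously; everything else is bookkeeping with the five nonnegative summands of (\ref{e51}).
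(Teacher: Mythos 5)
Your proposal is correct and follows essentially the same route as the paper: extract (\ref{e61})--(\ref{e64}) and the auxiliary identity (the paper's (\ref{e68})) from the vanishing of the nonnegative summands in (\ref{e51}), prove divergence of the total kinetic energy by the compactness/contradiction argument against Theorem \ref{T1}, and then deduce (\ref{e60}), (\ref{e65}), (\ref{e66}) by the same bookkeeping. The only difference is a cosmetic reordering in how you show each $D(u_{i\beta}^2,u_{i\beta}^2)$ diverges, which is logically equivalent to the paper's use of (\ref{e64})--(\ref{e65}).
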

 \begin{proof} According to (\ref{e67}), one can immediately deduce from (\ref{e51}), Lemma \ref{b3} and refined Gagliardo-Nirenberg inequality (\ref{e27}) that (\ref{e61})-(\ref{e64}) and the following hold,
\begin{equation} \label{e68}
\lim_{\beta  \to \beta ^{\ast } } \int\limits_{\mathbb{R} ^{3} }\left(\left | (-\triangle )^{\frac{1}{4} }u_{1\beta }   \right | ^{2}+\left | (-\triangle )^{\frac{1}{4} }u_{2\beta }   \right | ^{2}\right )dx- \frac{a^{\ast }}{2}D\left ( u_{1\beta }^{2}+u_{2\beta }^{2},u_{1\beta }^{2}+u_{2\beta }^{2} \right )=0.
\end{equation}
Note that
\begin{equation} \label{e69}
\lim_{\beta  \to \beta ^{\ast } } \int\limits_{\mathbb{R} ^{3} }\left(\left | (-\triangle )^{\frac{1}{4} }u_{1\beta }   \right | ^{2}+\left | (-\triangle )^{\frac{1}{4} }u_{2\beta }   \right | ^{2}\right )dx= \infty.
\end{equation}
In fact, suppose (\ref{e69}) does not hold, then we can infer from (\ref{e61}) that there exists a sequence $\left \{ \beta _{k}  \right \}$ satisfying $\beta _{k} \to \beta ^{\ast }$ as $k\to \infty $, such that $\left \{ \left ( u_{1 \beta _{k} }  , u_{2 \beta _{k} }  \right )  \right \} $ is bounded uniformly in $\mathcal{H} _{1}\times \mathcal{H} _{2} $. Therefore, from Lemma \ref{b1}, we conclude that there exists a subsequence of $\left \{ \beta _{k}  \right \}$, still denoted by $\left \{ \beta _{k}  \right \}$, and $\left ( u_{10},u_{20}   \right )\in  \mathcal{H} _{1}\times \mathcal{H} _{2} $ such that
\begin{equation} \label{e70}
\begin{cases}
\left ( u_{1\beta _{k}},u_{2\beta _{k}}  \right ) \rightharpoonup  \left ( u_{10},u_{20}  \right ) \quad {\rm weakly \ in}\  \mathcal{H} _{1}\times \mathcal{H} _{2},\\
\left ( u_{1\beta _{k}},u_{2\beta _{k}}  \right ) \to \left ( u_{10},u_{20}  \right ) \quad {\rm strongly \ in}\ L^{q}\left ( \mathbb{R} ^{3}  \right )\times L^{q}\left ( \mathbb{R} ^{3}  \right ), \  2\le q< 3.
\end{cases}
\end{equation}
Then, it follows from Lemma \ref{b4} and the weak lower semi-continuity of $\left (  \sqrt{-\triangle +m^{2} }u,u \right )$ that
\begin{equation} \label{e71}
0=e\left ( a_{1}, a_{2}, \beta^{\ast }   \right )\le E_{a_{1}, a_{2}, \beta^{\ast }}\left ( u_{10}, u_{20} \right )\le \lim_{k \to \infty } E_{a_{1}, a_{2}, \beta_{k}}\left ( u_{1\beta_{k} }, u_{2\beta_{k}} \right )=\lim_{k \to \infty }e\left ( a_{1}, a_{2}, \beta_{k}   \right )=0.
\end{equation}
It suggests that $\left ( u_{10}, u_{20} \right )$ is a minimizer of $e\left ( a_{1}, a_{2}, \beta^{\ast }   \right )$, which contradicts Theorem \ref{T1} and hence (\ref{e69}) holds.
 Combining (\ref{e68}) and (\ref{e69}), (\ref{e65}) thus follows. Moreover, from (\ref{e64}),(\ref{e65}) and (\ref{e69}), we further conclude that
 \begin{equation} \label{e105}
  \lim_{\beta  \to \beta ^{\ast } }  D\left ( u_{1\beta }^{2}+u_{2\beta }^{2},u_{1\beta }^{2}+u_{2\beta }^{2} \right )= \lim_{\beta  \to \beta ^{\ast } } \left (\sqrt{D\left ( u_{1\beta }^{2},u_{1\beta }^{2} \right )} +\sqrt{D\left ( u_{2\beta }^{2},u_{2\beta }^{2} \right )}  \right )^{2}=\infty.
 \end{equation}
It then follows from (\ref{e63}) that for $i=1,2$,
 \begin{equation} \label{e106}
 \lim_{\beta  \to \beta ^{\ast } } \iint\limits_{\mathbb{R} ^{3}\times \mathbb{R} ^{3}}\frac{u_{i\beta } ^{2}(x)u_{i\beta } ^{2}(y) }{\left | x-y \right | }dxdy=\infty.
  \end{equation}
Therefore, we can deduce from the Gagliardo-Nirenberg inequality  (\ref{e17}) that (\ref{e60}) holds.
  Finally,  (\ref{e60}) and (\ref{e63}) show that (\ref{e66}) is true, which completes the proof.\\

For the nonnegative minimizer $\left ( u_{1\beta },u_{2\beta }  \right )$ of (\ref{e1}), define
 \begin{equation} \label{e72}
 \varepsilon _{\beta }:= \left ( \int\limits_{\mathbb{R} ^{3} }\left ( \left | (-\triangle )^{\frac{1}{4} }u_{1\beta }   \right | ^{2}+\left | (-\triangle )^{\frac{1}{4} }u_{2\beta }   \right | ^{2} \right ) dx \right )^{-1}>0,
 \end{equation}
  which satisfies $\varepsilon _{\beta }\to 0$ as $\beta \to \beta ^{\ast }$ according to (\ref{e69}). Next, we introduce another lemma that will play a fundamental role in the proof of Theorem \ref{T2}.
\end{proof}
\begin{lemma}\label{b3.2} Suppose $\left ( \Omega  \right )$ holds and let $\left ( u_{1\beta }, u_{2\beta }  \right )$ be a nonnegative minimizer of $e\left ( a_{1}, a_{2},\beta \right )$ for $0< a_{1}, a_{2}< a^{\ast } $ and $0<\beta < \beta ^{\ast } $. Then for any sequence $\beta _{k} \to \beta ^{\ast }$ as $k\to \infty $, we obtain
\begin{enumerate}[(\romannumeral1)]
\item There exist a sequence $\left \{ \eta_{\beta _{k} }  \right \}\subset \mathbb{R} ^{3}$ and constants $r_{0}, \rho>0$ such that
 \begin{equation} \label{e73}
w_{i\beta _{k} }\left ( x \right ):=\varepsilon _{\beta _{k} }^{\frac{3}{2} }u_{i\beta _{k} }\left ( \varepsilon _{\beta _{k} }x+\varepsilon _{\beta _{k} }\eta _{\beta _{k} } \right )
 \end{equation}
 satisfies
  \begin{equation} \label{e74}
 \liminf _{k\to \infty }\int\limits_{B_{r_{0} }(0) }\left | w_{i\beta _{k} }\left ( x \right )  \right |  ^{2}dx\ge \rho > 0, \quad i=1,2.
  \end{equation}
  Furthermore,
\begin{equation} \label{e75}
\int\limits_{\mathbb{R} ^{3} }\left(\left | (-\triangle )^{\frac{1}{4} }w_{1\beta _{k} }   \right | ^{2}+\left | (-\triangle )^{\frac{1}{4} }w_{2\beta _{k} }   \right | ^{2}\right)dx=
\int\limits_{\mathbb{R} ^{3} }\left(\left | w_{1\beta _{k} }   \right | ^{2}+\left |w_{2\beta _{k} }   \right | ^{2}\right)dx=1,
  \end{equation}
\begin{equation} \label{e76}
\lim_{k  \to \infty  }\left [ \frac{\beta ^{\ast }-\beta_{k} }{\varepsilon _{\beta _{k}} } D\left ( w_{1\beta _{k} }^{2},w_{2\beta _{k} }^{2} \right )   \right ]=0,
  \end{equation}
  \begin{equation} \label{e77}
  \lim_{k  \to \infty  } D\left ( w_{1\beta _{k} }^{2}+w_{2\beta _{k} }^{2},w_{1\beta _{k} }^{2}+w_{2\beta _{k} }^{2} \right ) =\frac{2}{a^{\ast } } ,
    \end{equation}
\begin{equation} \label{e78}
\begin{split}
\lim_{k  \to \infty  } D\left ( w_{1\beta _{k} }^{2},w_{1\beta _{k} }^{2} \right ) &=\frac{2\gamma ^{2} }{a^{\ast } }, \
\lim_{k  \to \infty  } D\left ( w_{2\beta _{k} }^{2},w_{2\beta _{k} }^{2} \right ) =\frac{2\left ( 1-\gamma  \right ) ^{2} }{a^{\ast } },\\
&\lim_{k  \to \infty  } D\left ( w_{1\beta _{k} }^{2},w_{2\beta _{k} }^{2} \right ) =\frac{2\gamma \left ( 1-\gamma  \right )  }{a^{\ast } }.
\end{split}
 \end{equation}
\item There exists a subsequence of $\left \{ \beta _{k}  \right \} $, still denoted by $\left \{ \beta _{k}  \right \}$, such that
\begin{equation} \label{e79}
 x_{\beta _{k}}:=\varepsilon _{\beta _{k}}\eta_{\beta _{k}}\to x_{0} \quad {\rm as}\ k\to \infty
  \end{equation}
for some $x_{0}\in \mathcal{V}$.
\item There exists a subsequence of $\left \{ \beta _{k}  \right \} $, still denoted by $\left \{ \beta _{k}  \right \}$, and $\eta_{0}\in \mathbb{R} ^{3} $ such that
\begin{equation} \label{e80}
\begin{cases}
\lim_{k  \to \infty  }w_{1\beta _{k} }\left ( x \right )= \frac{\sqrt{\gamma  }}{\left \| Q \right \| _{2} }Q\left (  x-\eta_{0}   \right ),\\
\lim_{k  \to \infty  }w_{2\beta _{k} }\left ( x \right )= \frac{\sqrt{1-\gamma  }}{\left \| Q \right \| _{2} }Q\left (  x-\eta_{0}   \right ),
\end{cases}
\end{equation}
strongly in $H^{\frac{1}{2} }\left ( \mathbb{R} ^{3}  \right ) $.

\end{enumerate}
\end{lemma}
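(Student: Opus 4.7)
The strategy is to exploit the scaling behaviour of every term in $E_{a_{1},a_{2},\beta}$ under the blow-up $w_{i\beta_{k}}(x)=\varepsilon_{\beta_{k}}^{3/2}u_{i\beta_{k}}(\varepsilon_{\beta_{k}}x+\varepsilon_{\beta_{k}}\eta_{\beta_{k}})$ together with a concentration-compactness analysis. For part~(i), a change of variable gives $\int(w_{1\beta_{k}}^{2}+w_{2\beta_{k}}^{2})dx=1$; the Fourier scaling of $(-\triangle)^{1/4}$ yields $\int|(-\triangle)^{1/4}w_{i\beta_{k}}|^{2}dx=\varepsilon_{\beta_{k}}\int|(-\triangle)^{1/4}u_{i\beta_{k}}|^{2}dx$, and summing over $i$ with the definition of $\varepsilon_{\beta_{k}}$ delivers the gradient identity in (\ref{e75}). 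Similarly $D(w_{i\beta_{k}}^{2},w_{j\beta_{k}}^{2})=\varepsilon_{\beta_{k}}D(u_{i\beta_{k}}^{2},u_{j\beta_{k}}^{2})$, so combining with (\ref{e62}), (\ref{e65}) and (\ref{e66}) produces (\ref{e76})--(\ref{e78}). The sequence $\eta_{\beta_{k}}$ is selected as a near-maximiser of $y\mapsto\int_{B_{r_{0}}(y)}(w_{1\beta_{k}}^{2}+w_{2\beta_{k}}^{2})dx$; non-vanishing (\ref{e74}) then follows by contradiction, since global vanishing would force, via a Lions-type lemma in $H^{1/2}(\mathbb{R}^{3})$, that $\|w_{i\beta_{k}}\|_{L^{12/5}}\to 0$, and (\ref{e222}) would drive every convolution in (\ref{e77})--(\ref{e78}) to zero, contradicting the positive limits. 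Non-vanishing of \emph{each} component in the common ball $B_{r_{0}}(0)$ is then secured by the strictly positive mixed limit $\tfrac{2\gamma(1-\gamma)}{a^{\ast}}$ in (\ref{e78}): a species-splitting would separate the two mass distributions and kill $D(w_{1\beta_{k}}^{2},w_{2\beta_{k}}^{2})$ via (\ref{e222}).

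For part~(ii), set $x_{\beta_{k}}:=\varepsilon_{\beta_{k}}\eta_{\beta_{k}}$ and change variables in (\ref{e61}) to obtain
\[
\int_{\mathbb{R}^{3}} V_{i}(\varepsilon_{\beta_{k}}x+x_{\beta_{k}})\,w_{i\beta_{k}}^{2}(x)\,dx\longrightarrow 0,\qquad i=1,2.
\]
Restricting to $B_{r_{0}}(0)$, invoking $V_{i}\ge 0$ and (\ref{e74}), and applying Fatou's lemma give $V_{i}(x_{0})=0$ for any subsequential limit $x_{0}$ of $\{x_{\beta_{k}}\}$; since $V_{i}\to\infty$ at infinity, $\{x_{\beta_{k}}\}$ is bounded and $x_{0}\in\mathcal{V}$. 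For part~(iii), the identity (\ref{e75}) makes $\{w_{i\beta_{k}}\}$ bounded in $H^{1/2}(\mathbb{R}^{3})$, so along a further subsequence $w_{i\beta_{k}}\rightharpoonup w_{i0}$ weakly in $H^{1/2}$ and strongly in $L^{q}_{\mathrm{loc}}$ for $2\le q<3$, with $w_{i0}\not\equiv 0$ by (\ref{e74}). A concentration-compactness argument using the asymptotic saturation of the refined Gagliardo--Nirenberg inequality (\ref{e27}) encoded in (\ref{e75}) and (\ref{e77}) rules out dichotomy, yielding strong $L^{2}$ convergence with $\|w_{i\beta_{k}}\|_{2}^{2}\to\|w_{i0}\|_{2}^{2}$. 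Hence $(w_{10},w_{20})$ attains equality in (\ref{e27}), and the extremal characterisation in Lemma~\ref{le2} forces $(w_{10},w_{20})=(c_{1}Q(\cdot-\eta_{0}),c_{2}Q(\cdot-\eta_{0}))$ with a common translation $\eta_{0}$ and scale $1$ (the scale is pinned by the coincidence of the gradient and $L^{2}$ normalisations in (\ref{e75})). Passing to the limit in (\ref{e78}) and using $D(Q^{2},Q^{2})=2\|Q\|_{2}^{2}$ from (\ref{e21}) identifies $c_{1}^{2}=\gamma/\|Q\|_{2}^{2}$ and $c_{2}^{2}=(1-\gamma)/\|Q\|_{2}^{2}$. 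Strong $H^{1/2}$ convergence then follows because all norms of $w_{i\beta_{k}}$ converge to those of $w_{i0}$.

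The main obstacle is two-fold. First, in part~(i), producing a \emph{single} translation $\eta_{\beta_{k}}$ under which both species remain simultaneously non-vanishing in a common ball; the positive mixed-convolution limit in (\ref{e78}) is precisely the mechanism that rules out a species-splitting scenario and makes this possible. Second, in part~(iii), upgrading from weak to strong $H^{1/2}$ convergence, since the pseudo-relativistic quadratic form is not sequentially strongly continuous under weak $H^{1/2}$ convergence; this is handled by verifying that (\ref{e75}), (\ref{e77}) and (\ref{e78}) all pass to the limit with equality, which forces convergence of the relevant norms and hence the claimed strong convergence.
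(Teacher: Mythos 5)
Your scaling computations, the choice of $\eta_{\beta_k}$, the proof of (ii), and the identification of the limit profile once compactness is known are all consistent with the paper. But there is a genuine gap at the crux of part (i): the simultaneous non-vanishing of \emph{both} components in a common ball. You argue that if $w_{2\beta_k}$ vanished near the origin, a ``species-splitting'' would kill $D\left ( w_{1\beta _{k} }^{2},w_{2\beta _{k} }^{2} \right )$, contradicting the positive limit $\tfrac{2\gamma(1-\gamma)}{a^{\ast}}$ in (\ref{e78}). This does not follow: the negation of (\ref{e74}) for $i=2$ only says $\int_{B_r(0)}w_{2\beta_k}^2\to 0$ for each fixed $r$, and since at this stage you have no compactness for $w_{1\beta_k}$ (it is only known to carry mass $\ge\rho_1$ in $B_{r_1}(0)$, while the rest of its mass could dichotomize and travel to infinity alongside the mass of $w_{2\beta_k}$), the mixed Coulomb interaction can remain bounded away from zero even though the two components have disjoint ``cores''. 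Ruling this out requires exactly the compactness you only establish later in (iii), so the argument is circular as written. The paper avoids this entirely by a different mechanism: it passes to the limit in the Euler--Lagrange system (\ref{e95}) (after showing $\varepsilon_{\beta_k}\mu_{\beta_k}\to -1$), obtains the limit system (\ref{e96}) for $(w_{10},w_{20})$, and observes that if $w_{20}\equiv 0$ then $\sqrt{a_1}\,w_{10}$ solves (\ref{e8}), whence Lemma \ref{le1} forces $\left \| w_{10} \right \|_2^2\ge a^{\ast}/a_1>1$, contradicting $\left \| w_{10} \right \|_2^2\le 1$. Your proposal never invokes the Euler--Lagrange equation, which is the essential missing tool.

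A related, smaller weakness is in part (iii): you assert that ``saturation of the refined Gagliardo--Nirenberg inequality rules out dichotomy, yielding strong $L^2$ convergence,'' but (\ref{e75}) and (\ref{e77}) alone do not prevent the weak limit from losing mass; one must either run a Brezis--Lieb splitting argument for both the kinetic term and the Coulomb term, or do what the paper does, namely use the Pohozaev identity (\ref{e99}) for the limit system (\ref{e96}) to force the chain of equalities (\ref{e101}) and hence $\int_{\mathbb{R}^3}(w_{10}^2+w_{20}^2)\,dx=1$. As stated, this step of your plan asserts the conclusion rather than proving it. The remaining ingredients (pinning the scale $\alpha=1$ from (\ref{e75}), identifying $c_1^2=\gamma/\left \| Q \right \|_2^2$ and $c_2^2=(1-\gamma)/\left \| Q \right \|_2^2$ from the limits of the $D$-terms, and upgrading to strong $H^{\frac{1}{2}}$ convergence from norm convergence) are fine and match the paper.
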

\begin{proof}
\begin{enumerate}[(\romannumeral1)]
\item Denote
\begin{equation} \label{e81}
v_{i\beta _{k} }\left ( x \right ):=\varepsilon _{\beta _{k} }^{\frac{3}{2} }u_{i\beta _{k} }\left ( \varepsilon _{\beta _{k} }x\right ), \quad i=1,2.
\end{equation}
By a simple calculation, one can obtain from (\ref{e72}) that
\begin{equation} \label{e82}
\begin{split}
&\displaystyle \int\limits_{\mathbb{R} ^{3} }\left (\left | (-\triangle )^{\frac{1}{4} }v_{1\beta _{k} }   \right | ^{2}+\left | (-\triangle )^{\frac{1}{4} }v_{2\beta _{k} }   \right | ^{2}\right )dx\\
=&\varepsilon _{\beta _{k} }\int\limits_{\mathbb{R} ^{3} }\left (\left | (-\triangle )^{\frac{1}{4} }u_{1\beta _{k} }   \right | ^{2}+\left | (-\triangle )^{\frac{1}{4} }u_{2\beta _{k} }   \right | ^{2}\right )dx\\
=&1\\
=&\int\limits_{\mathbb{R} ^{3} }\left (\left | u_{1\beta _{k} }   \right | ^{2}+\left | u_{2\beta _{k} }   \right | ^{2}\right )dx\\
=&\int\limits_{\mathbb{R} ^{3} }\left (\left | v_{1\beta _{k} }   \right | ^{2}+\left | v_{2\beta _{k} }   \right | ^{2}\right )dx.
\end{split}
\end{equation}
Besides, it comes from $D\left ( v_{1\beta _{k} }^{2},v_{2\beta _{k} }^{2} \right )=\varepsilon _{\beta _{k} }D\left ( u_{1\beta _{k} }^{2},u_{2\beta _{k} }^{2} \right )$ and  (\ref{e62}) that
\begin{equation} \label{e83}
\lim_{k  \to \infty  } \left [ \frac{\beta ^{\ast }-\beta_{k} }{\varepsilon _{\beta _{k}} } D\left ( v_{1\beta _{k} }^{2},v_{2\beta _{k} }^{2} \right )   \right ]=\lim_{k  \to \infty  } \left [ \left ( \beta ^{\ast }-\beta_{k} \right )   D\left ( u_{1\beta _{k} }^{2},u_{2\beta _{k} }^{2} \right ) \right ] =0.
\end{equation}
Similarly from (\ref{e63}) and (\ref{e64}), we can obtain
$$
\lim_{k  \to \infty  } \left [ \sqrt{\left ( a^{\ast }-a_{1}   \right ) D\left ( v_{1\beta _{k} }^{2},v_{1\beta _{k} }^{2} \right )}-\sqrt{\left ( a^{\ast }-a_{2}   \right ) D\left ( v_{2\beta _{k} }^{2},v_{2\beta _{k} }^{2} \right )} \right ]^{2}=0,
$$
and
$$\lim_{k  \to \infty  } \left [ \sqrt{ D\left ( v_{1\beta _{k} }^{2},v_{1\beta _{k} }^{2} \right )D\left ( v_{2\beta _{k} }^{2},v_{2\beta _{k} }^{2} \right )}-D\left ( v_{1\beta _{k} }^{2},v_{2\beta _{k} }^{2} \right )  \right ]=0.$$
Moreover, from (\ref{e65}) and (\ref{e82}), one can easily conclude that
\begin{equation} \label{e84}
\begin{split}
&\lim_{k  \to \infty  } D\left ( v_{1\beta _{k} }^{2}+v_{2\beta _{k} }^{2},v_{1\beta _{k} }^{2}+v_{2\beta _{k} }^{2} \right )\\
=&\lim_{k  \to \infty  } \frac{D\left ( v_{1\beta _{k} }^{2}+v_{2\beta _{k} }^{2},v_{1\beta _{k} }^{2}+v_{2\beta _{k} }^{2} \right ) }{\displaystyle \int\limits_{\mathbb{R} ^{3} }\left (\left | (-\triangle )^{\frac{1}{4} }v_{1\beta _{k} }   \right | ^{2}+\left | (-\triangle )^{\frac{1}{4} }v_{2\beta _{k} }   \right | ^{2}\right )dx} \\
=& \lim_{k  \to \infty  } \frac{D\left ( u_{1\beta _{k} }^{2}+u_{2\beta _{k} }^{2},u_{1\beta _{k} }^{2}+u_{2\beta _{k} }^{2} \right ) }{\displaystyle \int\limits_{\mathbb{R} ^{3} }\left (\left | (-\triangle )^{\frac{1}{4} }u_{1\beta _{k} }   \right | ^{2}+\left | (-\triangle )^{\frac{1}{4} }u_{2\beta _{k} }   \right | ^{2}\right )dx}\\
=&\frac{2}{a^{\ast }} ,
\end{split}
\end{equation}
which implies that
\begin{equation} \label{e108}
\begin{split}
\frac{2}{a^{\ast }}&=\lim_{k  \to \infty  } D\left ( v_{1\beta _{k} }^{2}+v_{2\beta _{k} }^{2},v_{1\beta _{k} }^{2}+v_{2\beta _{k} }^{2} \right )\\
&=\lim_{k \to \infty } \left (\sqrt{D\left ( v_{1\beta _{k} }^{2},v_{1\beta _{k} }^{2} \right )} +\sqrt{D\left ( v_{2\beta _{k} }^{2},v_{2\beta _{k} }^{2} \right )}  \right )^{2}\\
&=\frac{1}{\gamma^{2} }\lim_{k \to \infty }D\left ( v_{1\beta _{k} }^{2},v_{1\beta _{k} }^{2} \right )\\
&=\frac{1}{\left ( 1-\gamma \right ) ^{2} }\lim_{k \to \infty }D\left ( v_{2\beta _{k} }^{2},v_{2\beta _{k} }^{2} \right ), \notag
\end{split}
\end{equation}
where $\gamma $ is defined by (\ref{e48}). Therefore, we have
\begin{equation} \label{e85}
\begin{split}
\lim_{k  \to \infty  } D\left ( v_{1\beta _{k} }^{2},v_{1\beta _{k} }^{2} \right ) &=\frac{2\gamma ^{2} }{a^{\ast } }, \
\lim_{k  \to \infty  } D\left ( v_{2\beta _{k} }^{2},v_{2\beta _{k} }^{2} \right ) =\frac{2\left ( 1-\gamma  \right ) ^{2} }{a^{\ast } },\\
&\lim_{k  \to \infty  } D\left ( v_{1\beta _{k} }^{2},v_{2\beta _{k} }^{2} \right ) =\frac{2\gamma \left ( 1-\gamma  \right )  }{a^{\ast } }.
\end{split}
\end{equation}
Note from (\ref{e82}) that $\left \{ v_{i\beta _{k}}  \right \} $ is bounded in $H^{\frac{1}{2} }\left ( \mathbb{R} ^{3}  \right ) $ for $i=1,2$. Next, we claim that there exist a sequence $\left \{ \eta_{\beta _{k}}  \right \}\subset \mathbb{R} ^{3} $ and $r_{1},\rho _{1}> 0 $ such that
 \begin{equation} \label{e86}
 \liminf _{k\to \infty }\int\limits_{B_{r_{1} }(\eta_{\beta _{k}}) }\left | v_{1\beta _{k} }\left ( x \right )  \right |  ^{2}dx\ge \rho_{1} > 0.
 \end{equation}
 Argue by contradiction that (\ref{e86}) is false. Then, for any $r> 0$, there exists a subsequence of $\left \{ v_{1\beta _{k}}  \right \} $, still denoted by $\left \{ v_{1\beta _{k}}  \right \} $, such that
 $$\lim  _{k\to \infty }\sup_{y\in \mathbb{R} ^{3} } \int\limits_{B_{r }(y) }\left | v_{1\beta _{k} }\left ( x \right )  \right |  ^{2}dx= 0,$$
 from which we can deduce that $  v_{1\beta _{k} }\to 0$ in $L^{q}\left ( \mathbb{R} ^{3}  \right )  $ for all $2< q< 3$ by Lions' vanishing lemma (see \cite{Secchi} Lemma 2.4). From (\ref{e222}), we then infer that $D\left ( v_{1\beta _{k} }^{2},v_{1\beta _{k} }^{2} \right )\to 0 $ as $k\to \infty $, which however contradicts (\ref{e85}). Therefore, (\ref{e86}) follows. From (\ref{e73}), we thus deduce
  \begin{equation} \label{e87}
   \liminf _{k\to \infty }\int\limits_{B_{r_{1} }(0) }\left | w_{1\beta _{k} }\left ( x \right )  \right |  ^{2}dx\ge \rho_{1} > 0.
 \end{equation}
Comparing (\ref{e73}) with (\ref{e81}), it follows from (\ref{e82})-(\ref{e85}) that (\ref{e75})-(\ref{e78}) hold.
Furthermore, from (\ref{e75}), one can assume that  $w_{i\beta _{k} }\rightharpoonup w_{i0}\ge 0 $ in $H^{\frac{1}{2} }\left (\mathbb{ R} ^{3}  \right ) $ for some $w_{i0}\in H^{\frac{1}{2} }\left (\mathbb{ R} ^{3}  \right ) $, where $i=1,2$. Moreover, from (\ref{e87}), one can easily see that $w_{10}\ne 0$.

To complete the proof, we only need to obtain the similar result as (\ref{e87}) for $w_{2\beta _{k} }$ with the same sequence $\eta_{\beta _{k}}$. Next, we first claim that $\left \{ x_{\beta _{k} } \right \}$ { defined in (\ref{e79}) is bounded} in $\mathbb{ R} ^{3} $. In fact, if it does not hold, there exists a subsequence of $\left \{ x_{\beta _{k} } \right \}$, still denoted by $\left \{ x_{\beta _{k} } \right \}$, such that $\left | x_{\beta _{k} } \right |\to \infty $ as $k\to \infty$. From (\ref{e61}), we infer
  \begin{equation} \label{e88}
  \sum_{i=1}^{2} \int\limits_{\mathbb{R} ^{3} }V_{i}(\varepsilon _{\beta _{k} } x+x_{\beta _{k} })\left | w_{i\beta _{k} }(x)   \right | ^{2}dx=
\sum_{i=1}^{2} \int\limits_{\mathbb{R} ^{3} }V_{i}(x)\left | u_{i\beta _{k} }(x)   \right | ^{2}dx\to 0 \quad {\rm as} \ k\to \infty .
 \end{equation}
 Since $V_{1}(x)\to \infty $ as $\left | x \right |\to \infty $, we can derive from Fatou's Lemma and (\ref{e87}) that  for $\forall C>0$,
 $$ \liminf _{k\to \infty } \sum_{i=1}^{2} \int\limits_{\mathbb{R} ^{3} }V_{i}(\varepsilon _{\beta _{k} } x+x_{\beta _{k} })\left | w_{i\beta _{k} }(x)   \right | ^{2}dx\ge
C\liminf _{k\to \infty } \int\limits_{B_{r_{1} }\left ( 0 \right )   }\left | w_{1\beta _{k} }(x)   \right | ^{2}dx\ge C\rho _{1}> 0,  $$
which contradicts (\ref{e88}) and therefore the claim follows. Then, we can infer that, going if necessary to a subsequence, there exists some point $x_{0}\in \mathbb{R} ^{3}$ such that
   \begin{equation} \label{e89}
   x_{\beta _{k} }\to x_{0} \quad {\rm as}\ k\to \infty .
    \end{equation}
Applying the translation and scaling properties (\ref{e700}) and (\ref{e701}) of pseudo-relativistic operator to $w_{i\beta _{k} }$, one can deduce from (\ref{e73}) that for $i=1,2$,
      \begin{equation} \label{e90}
          \sqrt{- \triangle +  \varepsilon _{\beta _{k} }^{2} m^{2} }w_{i\beta _{k} }\left (x \right )=\varepsilon _{\beta _{k} }^{\frac{5}{2} }\sqrt{- \triangle +  m^{2} } u_{i\beta _{k} }\left ( \varepsilon _{\beta _{k} }x+x _{\beta _{k} } \right ) .
    \end{equation}
 Moreover, based on the definition of convolution, we find
\begin{equation} \label{e112}
\begin{split}
&\left ( \left ( \left | x \right | ^{-1}\ast w_{2\beta _{k} } ^{2}  \right )  w_{1\beta _{k} } \right )\left (x \right )\\
=&\int\limits_{\mathbb{R} ^{3} }\frac{w_{2\beta_{k} } ^{2}(y) }{\left | x-y \right | }dy\cdot w_{1\beta_{k} }\left ( x \right ) \\
=&\int\limits_{\mathbb{R} ^{3} }\frac{\varepsilon _{\beta _{k} }^{3 }u_{2\beta_{k} } ^{2}\left ( \varepsilon _{\beta _{k} }y+x _{\beta _{k} }  \right ) }{\left | x-y \right | }dy\cdot \varepsilon _{\beta _{k} }^{\frac{3}{2} } u_{1\beta_{k} }\left ( \varepsilon _{\beta _{k} }x+x _{\beta _{k} }  \right )\\
=&\varepsilon _{\beta _{k} }^{\frac{3}{2} }\int\limits_{\mathbb{R} ^{3} }\frac{u_{2\beta_{k} } ^{2}\left ( \zeta  \right ) }{\left | x-\frac{\zeta -x _{\beta _{k} } }{\varepsilon  _{\beta _{k} } }  \right | }d\zeta \cdot  u_{1\beta_{k} }\left ( \varepsilon _{\beta _{k} }x+x _{\beta _{k} }  \right )\\
=&\varepsilon _{\beta _{k} }^{\frac{5}{2} }\int\limits_{\mathbb{R} ^{3} }\frac{u_{2\beta_{k} } ^{2}\left ( \zeta  \right ) }{\left | \varepsilon _{\beta _{k} }x+x _{\beta _{k} } -\zeta  \right | }d\zeta \cdot  u_{1\beta_{k} }\left ( \varepsilon _{\beta _{k} }x+x _{\beta _{k} }  \right )\\
=&\varepsilon _{\beta _{k} }^{\frac{5}{2} }\left ( \left ( \left | x \right | ^{-1}\ast u_{2\beta _{k} } ^{2}  \right )  u_{1\beta _{k} } \right ) \left ( \varepsilon _{\beta _{k} }x+x _{\beta _{k} } \right ),
\end{split}
\end{equation}
where $\zeta =\varepsilon _{\beta _{k} }y+x _{\beta _{k} }$. In the same way, we obtain
   \begin{equation} \label{e93}
    \left ( \left ( \left | x \right | ^{-1}\ast w_{1\beta _{k} } ^{2}  \right )  w_{2\beta _{k} } \right )\left (x \right )=\varepsilon _{\beta _{k} }^{\frac{5}{2} }\left ( \left ( \left | x \right | ^{-1}\ast u_{1\beta _{k} } ^{2}  \right )  u_{2\beta _{k} } \right ) \left ( \varepsilon _{\beta _{k} }x+x _{\beta _{k} } \right ) ,
    \end{equation}

 \begin{equation} \label{e91}
  \left ( \left ( \left | x \right | ^{-1}\ast w_{i\beta _{k} } ^{2}  \right )  w_{i\beta _{k} } \right )\left (x \right )=\varepsilon _{\beta _{k} }^{\frac{5}{2} }\left ( \left ( \left | x \right | ^{-1}\ast u_{i\beta _{k} } ^{2}  \right )  u_{i\beta _{k} } \right ) \left ( \varepsilon _{\beta _{k} }x+x _{\beta _{k} } \right ), \ i=1,2.
    \end{equation}
 Since the nonnegative minimizer $\left ( u_{1\beta_{k} },u_{2\beta_{k} }  \right )$ satisfies system (\ref{eP1}) with the Lagrange multiplier $  \mu _{\beta_{k} } \in \mathbb{R} $, we then conclude from (\ref{e73}) and (\ref{e90})-(\ref{e91}) that $\left ( w_{1\beta_{k}  },w_{2\beta_{k}  }  \right )$ satisfies the following equations in $\mathbb{R} ^{3}  $,
    \begin{equation}\label{e95}
\begin{cases}
\begin{split}
\sqrt{- \triangle +  \varepsilon _{\beta _{k} }^{2} m^{2} }w_{1\beta _{k} } +\varepsilon _{\beta _{k} }&V_{1}(\varepsilon _{\beta _{k} }x+x _{\beta _{k} } )w_{1\beta _{k} }=\varepsilon _{\beta _{k} }\mu _{\beta _{k} } w_{1\beta _{k} }+a_{1}\phi _{w_{1\beta _{k} }}w_{1\beta _{k} }+\beta _{k}  \phi _{w_{2\beta _{k} }}w_{1\beta _{k} },\\
\sqrt{- \triangle +  \varepsilon _{\beta _{k} }^{2} m^{2} }w_{2\beta _{k} }+\varepsilon _{\beta _{k} }&V_{2}(\varepsilon _{\beta _{k} }x+x _{\beta _{k} } )w_{2\beta _{k} }=\varepsilon _{\beta _{k} }\mu _{\beta _{k} } w_{2\beta _{k} } +a_{2}\phi _{w_{2\beta _{k} }}w_{2\beta _{k} }+\beta _{k} \phi _{w_{1\beta _{k} }}w_{2\beta _{k} },
    \end{split}
\end{cases}
\end{equation}
where $\phi _{u}\left ( x \right ):=\left | x \right | ^{-1}\ast  u^{2}  $ for $x\in \mathbb{R} ^{3} $.
From (\ref{e95}), one can easily obtain
\begin{equation} \label{e103}
    \begin{split}
 \varepsilon _{\beta _{k} }\mu _{\beta _{k} }=&\displaystyle\sum_{i=1}^{2}\int\limits_{\mathbb{R} ^{3} }\left ( w_{i\beta _{k} }\sqrt{- \triangle +  \varepsilon _{\beta _{k} }^{2} m^{2} }w_{i\beta _{k} } +\varepsilon _{\beta _{k} }V_{i}(\varepsilon _{\beta _{k} }x+x _{\beta _{k} } )w_{i\beta _{k} }^{2} \right ) dx \\
 &-\sum_{i=1}^{2} a_{i}\int\limits_{\mathbb{R} ^{3} }\phi _{w_{i\beta _{k} }}w_{i\beta _{k} }^{2}dx-2\beta _{k} \int\limits_{\mathbb{R} ^{3} } \phi _{w_{2\beta _{k} }}w_{1\beta _{k} }^{2}dx.
    \end{split}
    \end{equation}
Furthermore, one can deduce from (\ref{e2}) and (\ref{e90})-(\ref{e93}) that
 \begin{equation} \label{e104}
    \begin{split}
    \varepsilon _{\beta _{k} }e\left ( a_{1},a_{2},\beta _{k}  \right )=&\varepsilon _{\beta _{k} }E_{a_{1}, a_{2}, \beta_{k}}\left ( u_{1\beta_{k} }, u_{2\beta_{k}} \right )\\
=&\displaystyle\sum_{i=1}^{2}\int\limits_{\mathbb{R} ^{3} }\left ( w_{i\beta _{k} }\sqrt{- \triangle +  \varepsilon _{\beta _{k} }^{2} m^{2} }w_{i\beta _{k} } +\varepsilon _{\beta _{k} }V_{i}(\varepsilon _{\beta _{k} }x+x _{\beta _{k} } )w_{i\beta _{k} }^{2} \right ) dx \\
 &-\sum_{i=1}^{2} \frac{a_{i}}{2} \int\limits_{\mathbb{R} ^{3} }\phi _{w_{i\beta _{k} }}w_{i\beta _{k} }^{2}dx-\beta _{k} \int\limits_{\mathbb{R} ^{3} } \phi _{w_{2\beta _{k} }}w_{1\beta _{k} }^{2}dx.
    \end{split}
    \end{equation}
Combining (\ref{e103}) and (\ref{e104}), we conclude from (\ref{e72}) and (\ref{e75}) that
    \begin{equation} \label{e94}
    \begin{split}
    \varepsilon _{\beta _{k} }\mu _{\beta _{k} }=& 2\varepsilon _{\beta _{k} }e\left ( a_{1},a_{2},\beta _{k}  \right )-\sum_{i=1}^{2}\int\limits_{\mathbb{R} ^{3}}\left ( w_{i\beta _{k} } \sqrt{- \triangle +  \varepsilon _{\beta _{k} }^{2} m^{2} }w_{i\beta _{k} } +\varepsilon _{\beta _{k} }V_{i}(\varepsilon _{\beta _{k} }x+x _{\beta _{k} } )w_{i\beta _{k} } ^{2} \right ) dx\\
    &\to -1, \quad {\rm as}\ k\to \infty .
    \end{split}
    \end{equation}
Taking the weak limit of (\ref{e95}), one can find that $\left (w _{10}, w _{20} \right ) $ satisfies
        \begin{equation} \label{e96}
    \begin{cases}
    \sqrt{- \triangle}w_{10 } +w_{10} =a_{1}\phi _{w_{10 }}w_{10 }+\beta ^{\ast }  \phi _{w_{20 }}w_{10},\\
\sqrt{- \triangle }w_{20 }+w_{20 }=a_{2}\phi _{w_{20}}w_{20 }+\beta^{\ast } \phi _{w_{10 }}w_{20}.
\end{cases}
\end{equation}
Now, we come to show that there exist $r_{2},\rho _{2}>0 $ such that
        \begin{equation} \label{e97}
\liminf _{k\to \infty }\int\limits_{B_{r_{2} }(0) }\left | w_{2\beta _{k} }\left ( x \right )  \right |  ^{2}dx\ge \rho_{2} > 0.
\end{equation}
Indeed, it suffices to prove $w_{20} \ne 0$. Arguing by contradiction that $w_{20} = 0$, we then see from (\ref{e96}) that $w_{10} \ge 0$ satisfies
        \begin{equation} \label{e98}
 \sqrt{- \triangle}w_{10 } +w_{10} =a_{1}\phi _{w_{10 }}w_{10 } \quad {\rm in}\ \mathbb{R} ^{3}.
 \end{equation}
 By a simple rescaling, $\sqrt{a_{1} } w_{10 }$ satisfies (\ref{e8}). Note that $Q$ is a ground state of (\ref{e8}), Lemma \ref{le1} obviously implies
 $$\left \| w_{10} \right \|  _{2}^{2}\ge \frac{a^{\ast } }{a_{1} }> 1,$$
which leads to a contradiction. In fact, it follows from (\ref{e75}) and Fatou's Lemma that
$$\left \| w_{10} \right \|  _{2}^{2}\le \liminf _{k\to \infty }\left ( \left \| w_{1\beta _{k} } \right \|  _{2}^{2}+\left \| w_{2\beta _{k} } \right \|  _{2}^{2}  \right )=1.$$
Therefore, (\ref{e97}) holds. Combining (\ref{e87}) and (\ref{e97}), we also obtain (\ref{e74}), which finishes (i).
\end{enumerate}
\begin{enumerate}[(\romannumeral2)]
\item  To prove (ii), from (\ref{e89}), it suffices to show that $x_{0}\in \mathcal{V}  $. Arguing by contradiction that $x_{0}\notin  \mathcal{V}$, without loss of generality, we suppose $V_{1}(x_{0})\ne 0 $, which indicates $V_{1}(x_{0}) >0$. We then deduce from Fatou's Lemma and (\ref{e74}) that
    $$\liminf _{k\to \infty } \sum_{i=1}^{2} \int\limits_{\mathbb{R} ^{3} }V_{i}(\varepsilon _{\beta _{k} } x+x_{\beta _{k} })\left | w_{i\beta _{k} }   \right | ^{2}dx\ge V_{1}(x_{0} ) \liminf _{k\to \infty }\int\limits_{B_{r_{0} }(0) }\left | w_{1\beta _{k} }  \right |  ^{2}dx\ge V_{1}(x_{0} )\rho > 0,$$
    which contradicts (\ref{e88}) so that (ii) follows.
\end{enumerate}
\begin{enumerate}[(\romannumeral3)]
\item Using Theorem 6.1 in \cite{Zhou}, we know that the solution $\left ( w_{10},w_{20}   \right ) $ of system (\ref{e96}) satisfies the following type Pohozaev identity
        \begin{equation} \label{e99}
        \begin{split}
&\int\limits_{\mathbb{R} ^{3} }\left ( \left | (-\triangle )^{\frac{1}{4} }w_{10 }   \right | ^{2}+\left | (-\triangle )^{\frac{1}{4} }w_{20}   \right | ^{2} \right ) dx+\frac{3}{2}\int\limits_{\mathbb{R} ^{3} }\left (  w_{10}^{2}+ w_{20}^{2}  \right )dx\\
=&\frac{5}{4}\int\limits_{\mathbb{R} ^{3} }\left ( a_{1}\phi _{w_{10}}w_{10 }^{2}+a_{2}\phi _{w_{20}}w_{20 }^{2}+2\beta ^{\ast }  \phi _{w_{10 }}w_{20 }^{2}  \right ) dx.
\end{split}
 \end{equation}
 Note from (\ref{e75}) that
 \begin{equation} \label{e100}
 \int\limits_{\mathbb{R} ^{3} }\left ( \left | (-\triangle )^{\frac{1}{4} }w_{10 }   \right | ^{2}+\left | (-\triangle )^{\frac{1}{4} }w_{20 }   \right | ^{2} \right ) dx\le 1\quad {\rm and}\ \int\limits_{\mathbb{R} ^{3} } (w_{10 } ^{2}+w_{20}  ^{2})dx\le 1.
  \end{equation}
  {\re Moreover, we can infer from (\ref{e96}) that
   \begin{equation} \label{e1101}
        \begin{split}
&\int\limits_{\mathbb{R} ^{3} }\left ( \left | (-\triangle )^{\frac{1}{4} }w_{10 }   \right | ^{2}+\left | (-\triangle )^{\frac{1}{4} }w_{20}   \right | ^{2} \right ) dx+\int\limits_{\mathbb{R} ^{3} }\left (  w_{10}^{2}+ w_{20}^{2}  \right )dx\\
=&\int\limits_{\mathbb{R} ^{3} }\left ( a_{1}\phi _{w_{10}}w_{10 }^{2}+a_{2}\phi _{w_{20}}w_{20 }^{2}+2\beta ^{\ast }  \phi _{w_{10 }}w_{20 }^{2}  \right ) dx.
\end{split}
 \end{equation}
 Applying the Pohozaev identity (\ref{e99}), (\ref{e1101}) leads to
    \begin{equation} \label{e1102}
2\int\limits_{\mathbb{R} ^{3} }\left (  w_{10}^{2}+ w_{20}^{2}  \right )dx
=\int\limits_{\mathbb{R} ^{3} }\left ( a_{1}\phi _{w_{10}}w_{10 }^{2}+a_{2}\phi _{w_{20}}w_{20 }^{2}+2\beta ^{\ast }  \phi _{w_{10 }}w_{20 }^{2}  \right ) dx.
 \end{equation}
  Together with the  refined Gagliardo-Nirenberg inequality (\ref{e27}) and Lemma \ref{b3}, we deduce from (\ref{e100}), (\ref{e1101}) and (\ref{e1102}) that}
   \begin{equation} \label{e101}
        \begin{split}
        &\int\limits_{\mathbb{R} ^{3} }\left ( \left | (-\triangle )^{\frac{1}{4} }w_{10 }   \right | ^{2}+\left | (-\triangle )^{\frac{1}{4} }w_{20 }   \right | ^{2} \right ) dx=\int\limits_{\mathbb{R} ^{3} } (w_{10 } ^{2}+w_{20}  ^{2})dx\\
        =&\frac{1}{2}\int\limits_{\mathbb{R} ^{3} }\left ( a_{1}\phi _{w_{10}}w_{10 }^{2}+a_{2}\phi _{w_{20}}w_{20 }^{2}+2\beta ^{\ast }  \phi _{w_{10 }}w_{20 }^{2}  \right ) dx\\
        =&\frac{a^{\ast }}{2}D\left ( w_{10}^{2}+w_{20}^{2},w_{10}^{2}+w_{20}^{2} \right )-\frac{1}{2}\left [ \sqrt{\left ( a^{\ast }-a_{1} \right )D\left ( w_{10}^2,w_{10}^2 \right )    }-\sqrt{\left ( a^{\ast }-a_{2} \right )D\left ( w_{20}^2,w_{20}^2 \right )   }    \right ]^2\\
&-\sqrt{\left ( a^{\ast }-a_{1} \right )\left ( a^{\ast }-a_{2} \right )   } \left [  \sqrt{D\left ( w_{10}^2,w_{10}^2 \right )D\left ( w_{20}^2,w_{20}^2 \right )}-D\left ( w_{10}^2,w_{20}^2 \right )  \right ]\\
\le &\frac{a^{\ast }}{2}D\left ( w_{10}^{2}+w_{20}^{2},w_{10}^{2}+w_{20}^{2} \right )\\
\le & \int\limits_{\mathbb{R} ^{3} }\left ( \left | (-\triangle )^{\frac{1}{4} }w_{10 }   \right | ^{2}+\left | (-\triangle )^{\frac{1}{4} }w_{20 }   \right | ^{2} \right ) dx\int\limits_{\mathbb{R} ^{3} } (w_{10 } ^{2}+w_{20}  ^{2})dx\\
\le& \int\limits_{\mathbb{R} ^{3} }\left ( \left | (-\triangle )^{\frac{1}{4} }w_{10 }   \right | ^{2}+\left | (-\triangle )^{\frac{1}{4} }w_{20 }   \right | ^{2} \right ) dx,
\end{split}
 \end{equation}
 which suggests that the aforementioned equalities hold. Therefore, based on Lemma \ref{b3} and Lemma \ref{le2}, we can deduce that
     \begin{equation} \label{e812}
\left ( w_{10}(x), w_{20}(x) \right )=\left ( \tau sin\theta  Q\left (\alpha x-\eta_{0}  \right ) ,\tau cos\theta Q(\alpha x-\eta_{0} ) \right ), \quad w_{10}^{2}(x)=\kappa w_{20}^{2}(x),
  \end{equation}
with some $\tau>0, \alpha>0, \theta \in \left [ 0,2\pi  \right ), \eta_{0} \in \mathbb{R} ^{3}$ and $\kappa>0$. Actually, it follows from (\ref{e101}) that
   \begin{equation} \label{e810}
   \int\limits_{\mathbb{R} ^{3} }\left ( \left | (-\triangle )^{\frac{1}{4} }w_{10 }   \right | ^{2}+\left | (-\triangle )^{\frac{1}{4} }w_{20 }   \right | ^{2} \right ) dx= \int\limits_{\mathbb{R} ^{3} } (w_{10 } ^{2}+w_{20}  ^{2})dx= 1,
     \end{equation}
   which implies $\alpha =1$ and $\tau =\frac{1}{\left \| Q \right \|_{2} }$ in (\ref{e812}). Besides, (\ref{e101}) also gives
    \begin{equation} \label{e811}
   \sqrt{\left ( a^{\ast }-a_{1} \right )D\left ( w_{10}^2,w_{10}^2 \right )    }=\sqrt{\left ( a^{\ast }-a_{2} \right )D\left ( w_{20}^2,w_{20}^2 \right )   },
          \end{equation}
   which infers $\kappa =\sqrt{\frac{a^{\ast }-a_{2}}{a^{\ast }-a_{1}} } $ in (\ref{e812}) and through simple calculations we thus conclude that
 \begin{equation} \label{e102}
w_{10}(x)=\frac{\sqrt{\gamma } }{\left \| Q \right \| _{2} }Q(x-\eta_{0} ) \quad {\rm and} \quad w_{20}(x)=\frac{\sqrt{1-\gamma } }{\left \| Q \right \| _{2} }Q(x-\eta_{0} ),
            \end{equation}
where $\gamma $ is given by (\ref{e48}).
 Moreover, one can further derive from (\ref{e75}) and { (\ref{e810})} that
 $$ \lim _{k\to \infty }\int\limits_{\mathbb{R} ^{3} }\left ( \left | (-\triangle )^{\frac{1}{4} }w_{i\beta _{k}  }   \right | ^{2}+w_{i\beta _{k} } ^{2}\right ) dx=\int\limits_{\mathbb{R} ^{3} }\left (  \left | (-\triangle )^{\frac{1}{4} }w_{i0 }   \right | ^{2} +w_{i0}  ^{2}\right )dx,\quad i=1,2.$$
 Combining with the fact that $w_{i\beta _{k} }\rightharpoonup w_{i0} $ in $H^{\frac{1}{2} }\left (\mathbb{ R} ^{3}  \right ) $, we thus obtain that $$w_{i\beta _{k} }\to  w_{i0} \quad {\rm strongly ~~in}\ ~~ H^{\frac{1}{2} }\left (\mathbb{ R} ^{3}  \right ), \quad i=1,2. $$
From (\ref{e102}), we show that (\ref{e80}) holds, and the proof of the lemma is done.
\end{enumerate}
 \end{proof}
\noindent{\bf Proof of Theorem \ref{T2}:} Notice that from (\ref{e95}), one can obtain
\begin{equation} \label{e205}
\sqrt{-\triangle }w_{i\beta _{k}}(x)-c_{i\beta _{k}}(x)w_{i\beta _{k}}(x)\le 0 \quad {\rm in} \ \mathbb{R} ^{3},\quad i=1,2,
\end{equation}
where $$c_{1\beta _{k}}:=a_{1} \phi _{w_{1\beta _{k}}}+\beta _{k}\phi _{w_{2\beta _{k}}} \quad {\rm and} \quad c_{2\beta _{k}}:=a_{2} \phi _{w_{2\beta _{k}}}+\beta _{k}\phi _{w_{1\beta _{k}}}, \quad i=1,2.     $$
Applying the non-local De Giorgi-Nash-Moser theory (see Theorem 1.1 in \cite{Imbesi} or Theorem 5.4 in \cite{Kim}) to globally nonnegative weak subsolution $w_{i\beta _{k}}(i=1,2)$ of (\ref{e205}), we then have
\begin{equation} \label{e206}
\sup_{B_{1}(\xi _{0}) }w_{i\beta _{k}}\le C\left ( \int\limits_{B_{2}(\xi _{0})}\left | w_{i\beta _{k}} \right |^{2}dx  \right ) ^{\frac{1}{2} }, \quad i=1,2,
\end{equation}
where $C$ is a positive constant and $\xi _{0}$ is an arbitrary point in $\mathbb{R} ^{3} $. Moreover, from  (\ref{e80}) we can deduce that
\begin{equation} \label{e207}
\int\limits_{\left | x \right |> r }\left | w_{i\beta _{k}} \right |^{2} dx\to 0 \quad {\rm as} \ r\to \infty \ {\rm uniformly \ in }\ k,\ i=1,2.
\end{equation}
Combining (\ref{e206}) and (\ref{e207}), we thus find that
\begin{equation} \label{e208}
w_{i\beta _{k}}(x)\to 0 \quad {\rm as} \ \left | x \right | \to \infty \ {\rm uniformly \ in }\ k,\ i=1,2.
\end{equation}
Therefore, we conclude that $w_{i\beta _{k}}(i=1,2)$ has at least one global maximum point.\\

Let $z_{i\beta _{k}}$ be a global maximum point of $u_{i\beta _{k}}$ for $i=1,2$, from (\ref{e73}), $w_{i\beta _{k}}$ thus attains its global maximum at the point $\frac{z_{i\beta _{k}}-x_{\beta _{k}} }{\varepsilon _{\beta _{k}} } $. Hence, one can derive from (\ref{e74}) and (\ref{e208}) that
$$\limsup_{k \to \infty} \frac{\left | z_{i\beta _{k}}-x_{\beta _{k}}  \right | }{\varepsilon _{\beta _{k}} }< \infty , \quad i=1,2.$$
Going if necessary to a subsequence, there exists $\eta_{i}\in \mathbb{R} ^{3}$ such that
\begin{equation} \label{e209}
\lim_{k \to \infty} \frac{z_{i\beta _{k}}-x_{\beta _{k}} }{\varepsilon _{\beta _{k}} }=\eta_{i}, \quad i=1,2.
\end{equation}
Define
\begin{equation} \label{e210}
\bar{w} _{i\beta _{k}}(x):=\varepsilon _{\beta _{k} }^{\frac{3}{2} }u_{i\beta _{k} }\left ( \varepsilon _{\beta _{k} }x+z_{i\beta _{k} } \right )=w_{i\beta _{k} }(x+\frac{z_{i\beta _{k}}-x_{\beta _{k}} }{\varepsilon _{\beta _{k}} }) , \quad i=1,2.
\end{equation}
From Lemma \ref{b3.2} (iii), we obtain
\begin{equation} \label{e211}
\begin{cases}
\lim_{k  \to \infty  }\bar{w} _{1\beta _{k} }\left ( x \right )=\bar{w} _{10}(x):= \frac{\sqrt{\gamma  }}{\left \| Q \right \| _{2} }Q\left (  x+\eta_{1} -\eta_{0}   \right ),\\
\lim_{k  \to \infty  }\bar{w} _{2\beta _{k} }\left ( x \right )=\bar{w} _{20}(x):= \frac{\sqrt{1-\gamma  }}{\left \| Q \right \| _{2} }Q\left (  x+\eta_{2}-\eta_{0}   \right ),
\end{cases}
\end{equation}
strongly in $H^{\frac{1}{2} }\left ( \mathbb{R} ^{3}  \right ) $. Note from (\ref{e210}) that the origin is a critical point of $\bar{w} _{i\beta _{k}}$, then it is also a critical point of $\bar{w} _{i0}$ for $i=1,2$. We thus deduce that $\bar{w} _{i0}(i=1,2)$ are spherically symmetric about the origin due to the fact that the ground state $Q(x)$ is radially symmetric and $Q(r)$ decreasing  with $r=|x|$, which implies $\eta_{1}=\eta_{2}=\eta_{0} $ in (\ref{e211}). Therefore, (\ref{e201}) and (\ref{e203}) immediately follow from (\ref{e209}) and (\ref{e211}). Furthermore, (\ref{e209}) and Lemma \ref{b3.2} (ii) imply that (\ref{e200}) holds. Finally,
(\ref{e204}) can be derived by applying (\ref{e76}) and (\ref{e78}) so that the proof of Theorem \ref{T2} has been done.
\section{Explicit concentration behavior}
We are concerned with the concentration behavior of the minimizers for (\ref{e1}) under explicit trapping potentials, i.e., the polynomial potentials in this section, where some delicate estimates of the associated functionals will be also given.
\begin{lemma} Suppose that $V_{i}(x)(i=1,2)$ satisfy (\ref{e212}) and (\ref{e213}). If it holds $0<q_{0}<1$ when $m\ne 0$ and $0<q_{0}<\frac{5}{2} $ when $m=0$, then for any given $0<a_{1},a_{2}<a^{\ast } $ and $0<\beta < \beta ^{\ast } $, we have
\begin{equation} \label{e224}
\liminf_{\beta  \to \beta ^{\ast }} \frac{e(a_{1},a_{2},\beta )}{\left ( \beta ^{\ast }-\beta \right )^{\frac{q_{0}}{q_{0}+1} }} \le
\frac{q_{0}+1}{q_{0}a^{\ast }}\left ( q_{0}\lambda _{0}  \right )^{\frac{1}{q_{0}+1} }\left [ 2\gamma (1-\gamma ) \right ]^{\frac{q_{0}}{q_{0}+1} },
\end{equation}
where $\gamma \in \left ( 0,1 \right )$, $q_{0}>0$ and $\lambda _{0}>0$ are given in (\ref{e48}), (\ref{e216}) and (\ref{e218}), respectively.
\end{lemma}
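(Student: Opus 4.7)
\medskip
\noindent\textbf{Proof proposal.} The plan is to produce an explicit test pair $(\phi_{1\sigma},\phi_{2\sigma})\in\mathcal{M}$, compute $E_{a_1,a_2,\beta}(\phi_{1\sigma},\phi_{2\sigma})$ up to the order that matters, and then optimize in the scale $\sigma$. The natural test pair is exactly the one already deployed in the proof of Theorem~\ref{T1}(ii), namely
\[
\phi_{i\sigma}(x)=\frac{\sqrt{\theta_i}\,A_{\sigma}}{\|Q\|_{2}}\sigma^{3/2}\varphi(x-x_0)Q\!\left(\sigma(x-x_0)\right),\qquad i=1,2,
\]
with $\theta_1=\gamma$, $\theta_2=1-\gamma$ (the weights dictated by~(\ref{e48})) and with $x_0=x_{j_0}\in Z_0$ chosen at a flattest common minimum of $V_1,V_2$. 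The pseudo-relativistic and Hartree pieces of the energy were already estimated in~(\ref{e40})--(\ref{e44}) and combined in~(\ref{e47}); using $a^{\ast}-a_1\gamma^{2}-a_2(1-\gamma)^{2}=2\beta^{\ast}\gamma(1-\gamma)$ these four terms collapse to
\[
\frac{\sigma}{a^{\ast}}\bigl[a^{\ast}-a_1\gamma^{2}-a_2(1-\gamma)^{2}-2\beta\gamma(1-\gamma)\bigr]
=\frac{2\gamma(1-\gamma)}{a^{\ast}}(\beta^{\ast}-\beta)\sigma,
\]
plus error $O(\sigma^{-5/2})+m^{2}O(\sigma^{-1})$.

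The new work is therefore a sharper estimate of the trapping terms $\int V_i\phi_{i\sigma}^{2}\,dx$. After the change of variables $y=\sigma(x-x_{j_0})$ and using the polynomial form~(\ref{e212}) together with~(\ref{e39}), one has
\[
\int_{\mathbb{R}^{3}}V_1(x)\phi_{1\sigma}^{2}\,dx
=\frac{\gamma A_{\sigma}^{2}}{\|Q\|_{2}^{2}}\int_{\mathbb{R}^{3}}V_1(\sigma^{-1}y+x_{j_0})\varphi^{2}(\sigma^{-1}y)Q^{2}(y)\,dy,
\]
and similarly for $V_2$. Near $x_{j_0}$ one has $V_i(\sigma^{-1}y+x_{j_0})\sim\sigma^{-q_{ij_0}}h_i(x_{j_0})\prod_{k\neq j_0}|x_{j_0}-x_{ik}|^{q_{ik}}|y|^{q_{ij_0}}$; since $Q(y)=O(|y|^{-4})$ by~(\ref{e9}), the integrands are dominated by an integrable function (this is where the hypothesis $q_0<5/2$ enters, to keep $|y|^{q_0}Q^{2}(y)$ integrable), and dominated convergence applies. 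Splitting into the three cases $q_{1j_0}<q_{2j_0}$, $q_{1j_0}=q_{2j_0}$, $q_{1j_0}>q_{2j_0}$ and recalling the definition~(\ref{e215}) of $\lambda_{j_0}$, one obtains in all three cases
\[
\int_{\mathbb{R}^{3}}V_1(x)\phi_{1\sigma}^{2}\,dx+\int_{\mathbb{R}^{3}}V_2(x)\phi_{2\sigma}^{2}\,dx
=\frac{\lambda_{0}}{a^{\ast}\sigma^{q_0}}+o(\sigma^{-q_0}),
\]
because $x_{j_0}\in Z_0$ forces $\lambda_{j_0}=\lambda_0$ and $q_{j_0}=q_0$, and the subdominant index $q_{ij_0}>q_0$ contributes only $o(\sigma^{-q_0})$.

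Assembling these pieces,
\begin{equation*}
E_{a_1,a_2,\beta}(\phi_{1\sigma},\phi_{2\sigma})\le \frac{2\gamma(1-\gamma)}{a^{\ast}}(\beta^{\ast}-\beta)\sigma+\frac{\lambda_{0}}{a^{\ast}\sigma^{q_0}}+O(\sigma^{-5/2})+m^{2}O(\sigma^{-1})+o(\sigma^{-q_0}).
\end{equation*}
Treating the main part $A\sigma+B\sigma^{-q_0}$ with $A=\frac{2\gamma(1-\gamma)(\beta^{\ast}-\beta)}{a^{\ast}}$ and $B=\frac{\lambda_0}{a^{\ast}}$, calculus minimization yields the critical scale
\[
\sigma_\beta=\left(\frac{q_0\lambda_0}{2\gamma(1-\gamma)(\beta^{\ast}-\beta)}\right)^{\!1/(q_0+1)}
=\epsilon_{\beta}^{-1},
\]
i.e., the reciprocal of the scale $\epsilon_\beta$ appearing in~(\ref{e221}). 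Plugging $\sigma_\beta$ back and using the identity $B\sigma^{-q_0}=A\sigma/q_0$ at the minimum gives
\[
A\sigma_\beta+B\sigma_\beta^{-q_0}
=\frac{q_0+1}{q_0\,a^{\ast}}(q_0\lambda_0)^{1/(q_0+1)}\bigl[2\gamma(1-\gamma)\bigr]^{q_0/(q_0+1)}(\beta^{\ast}-\beta)^{q_0/(q_0+1)}.
\]
Finally, the error $O(\sigma_\beta^{-5/2})+m^{2}O(\sigma_\beta^{-1})$ is of order $(\beta^{\ast}-\beta)^{5/[2(q_0+1)]}$ and $m^{2}(\beta^{\ast}-\beta)^{1/(q_0+1)}$; under the assumption $q_0<5/2$ (and $q_0<1$ when $m\neq 0$) both exceed the exponent $q_0/(q_0+1)$ and are therefore $o\bigl((\beta^{\ast}-\beta)^{q_0/(q_0+1)}\bigr)$. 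Dividing through by $(\beta^{\ast}-\beta)^{q_0/(q_0+1)}$ and letting $\beta\to\beta^{\ast}$ yields~(\ref{e224}). The main delicate point is the dominated-convergence argument in the potential estimate (case analysis according to whether $q_{1j_0}\lessgtr q_{2j_0}$, and integrability of $|y|^{q_0}Q^{2}(y)$ coming from the slow polynomial decay~(\ref{e9})); this is precisely the obstruction that rules out the exponential-decay shortcut available in the classical Laplacian setting of~\cite{Du22}.
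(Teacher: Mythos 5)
Your proposal is correct and follows essentially the same route as the paper: the same test pair $(\phi_{1\sigma},\phi_{2\sigma})$ with $\theta=\gamma$ centered at $x_{j_0}\in Z_0$, the bound~(\ref{e47}) for the kinetic and Hartree parts, the refined potential estimate yielding $\lambda_0/(a^{\ast}\sigma^{q_0})+o(\sigma^{-q_0})$, and the same choice of scale (which the paper simply posits in~(\ref{e824}) and you recover by optimizing $A\sigma+B\sigma^{-q_0}$). One cosmetic remark: the integrability of $|y|^{q_0}Q^{2}(y)$ only requires $q_0<5$ given the decay~(\ref{e9}), so the true source of the thresholds $q_0<\tfrac{5}{2}$ (resp.\ $q_0<1$) is, as you correctly state at the end, the need for the kinetic errors $O(\sigma^{-5/2})$ and $m^{2}O(\sigma^{-1})$ to be $o(\sigma^{-q_0})$.
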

\begin{proof}
Let $\left ( \phi _{1\sigma } ,\phi _{2\sigma } \right ) $ be the function given by (\ref{e36}) with
\begin{equation} \label{e824}
\sigma =\left [ \frac{q_{0}\lambda _{0}}{2\gamma (1-\gamma )(\beta ^{\ast }-\beta )}  \right ]^{\frac{1}{q_{0}+1} }.
 \end{equation}
 At the same time, set $\theta =\gamma$ and take $x_{0}=x_{j_{0}}$, where $x_{j_{0}}\in Z_{0}$ with $Z_{0}$ given by (\ref{e219}). Applying (\ref{e47}), one can obtain
\begin{equation} \label{e225}
\begin{split}
&\sum_{i=1}^{2}\int\limits_{\mathbb{R} ^{3} }\phi _{i\sigma }\sqrt{- \triangle + m^{2} }\phi _{i\sigma }dx-\frac{1}{2} \left [a_{1}D\left ( \phi _{1\sigma } ^{2},\phi _{1\sigma } ^{2}  \right )+ a_{2}D\left ( \phi _{2\sigma } ^{2},\phi _{2\sigma } ^{2}  \right )+2\beta D\left ( \phi _{1\sigma } ^{2},\phi _{2\sigma } ^{2}  \right )\right ] \\
\le &\frac{2\sigma}{a^{\ast }} \gamma \left ( 1-\gamma  \right ) \left ( \beta ^{\ast } - \beta  \right )+O\left ( \sigma ^{-\frac{5}{2} }  \right )+2m^{2}O\left ( \sigma ^{-1}  \right ).
\end{split}
\end{equation}
Note that, when $m\ne 0$, we then know from $0<q_{0}<1$ that
\begin{equation} \label{e826}
\frac{2\sigma}{a^{\ast }} \gamma \left ( 1-\gamma  \right ) \left ( \beta ^{\ast } - \beta  \right )+O\left ( \sigma ^{-\frac{5}{2} }  \right )+2m^{2}O\left ( \sigma ^{-1}  \right )=\frac{2\sigma}{a^{\ast }} \gamma \left ( 1-\gamma  \right ) \left ( \beta ^{\ast } - \beta  \right )+o(\sigma ^{-q_{0}}),
\end{equation}
as $\sigma \to  \infty $. Moreover, when $m=0$, it then follows from $0<q_{0}<\frac{5}{2} $ that (\ref{e826}) still holds. Therefore, combining (\ref{e225}) and (\ref{e826}), one can deduce from (\ref{e824}) that
\begin{equation} \label{e827}
\begin{split}
&\sum_{i=1}^{2}\int\limits_{\mathbb{R} ^{3} }\phi _{i\sigma }\sqrt{- \triangle + m^{2} }\phi _{i\sigma }dx-\frac{1}{2} \left [a_{1}D\left ( \phi _{1\sigma } ^{2},\phi _{1\sigma } ^{2}  \right )+ a_{2}D\left ( \phi _{2\sigma } ^{2},\phi _{2\sigma } ^{2}  \right )+2\beta D\left ( \phi _{1\sigma } ^{2},\phi _{2\sigma } ^{2}  \right )\right ]  \\
\le &\frac{1}{a^{\ast }}\left ( q_{0}\lambda _{0} \right )^{\frac{1}{q_{0}+1} }  \left [ 2\gamma (1-\gamma )  \right ]^{\frac{q_{0}}{q_{0}+1} }\left ( \beta ^{\ast }-\beta  \right )^{\frac{q_{0}}{q_{0}+1} }+o(\left ( \beta ^{\ast }-\beta  \right )^{\frac{q_{0}}{q_{0}+1} }) \quad {\rm as}\ \beta \to  \beta ^{\ast }.
\end{split}
\end{equation}
On the other hand, without loss of generality, we suppose $q_{0}=q_{1j_{0}}< q_{2j_{0}}$, hence we can deduce from (\ref{e36}) and (\ref{e215})-(\ref{e219}) that
\begin{equation} \label{e226}
\begin{split}
&\int\limits_{\mathbb{R} ^{3} }V_{1}(x)\phi _{1\sigma }^{2}dx+\int\limits_{\mathbb{R} ^{3} }V_{2}(x)\phi _{2\sigma }^{2}dx \\
=&\sigma ^{-q_{0}}\int\limits_{\mathbb{R} ^{3} }\frac{A_{\sigma }^{2}\gamma   V_{1}(\sigma ^{-1} x+x_{j_{0}} ) }{a^{\ast }}\sigma ^{q_{0}}\varphi ^{2}(\sigma ^{-1}x)Q^{2}(x)dx\\
&+\sigma ^{-q_{0}}\int\limits_{\mathbb{R} ^{3} }\frac{A_{\sigma }^{2}(1-\gamma)   V_{2}(\sigma ^{-1} x+x_{j_{0}} ) }{a^{\ast }}\sigma ^{q_{0}}\varphi ^{2}(\sigma ^{-1}x)Q^{2}(x)dx \\
=&\sigma ^{-q_{0}}\int\limits_{\mathbb{R} ^{3} }\frac{A_{\sigma }^{2}\gamma   V_{1}(\sigma ^{-1} x+x_{j_{0}} ) }{a^{\ast }\left | \sigma ^{-1}x \right |^{q_{1j_{0}}} }\left | x \right |^{q_{1j_{0}}}\varphi ^{2}(\sigma ^{-1}x)Q^{2}(x)dx\\
&+\sigma ^{-q_{0}}\int\limits_{\mathbb{R} ^{3} }\frac{A_{\sigma }^{2}(1-\gamma)   V_{2}(\sigma ^{-1} x+x_{j_{0}} ) }{a^{\ast }\left | \sigma ^{-1}x \right |^{q_{2j_{0}}}}\sigma ^{q_{0}-q_{2j_{0}}}\left | x \right |^{q_{2j_{0}}}\varphi ^{2}(\sigma ^{-1}x)Q^{2}(x)dx\\
 =& \frac{\lambda _{0} }{a^{\ast }\sigma ^{q_{0}}}+o(\sigma ^{-q_{0}}) \quad {\rm as}\ \sigma \to \infty .
\end{split}
\end{equation}
Therefore, from (\ref{e824}), we thus infer
\begin{equation} \label{e228}
\begin{split}
&\int\limits_{\mathbb{R} ^{3} }V_{1}(x)\phi _{1\sigma }^{2}dx+\int\limits_{\mathbb{R} ^{3} }V_{2}(x)\phi _{2\sigma }^{2}dx\\
=&\frac{1}{q_{0}a^{\ast }} \left ( q_{0}\lambda _{0} \right )^{\frac{1}{q_{0}+1} }  \left [ 2\gamma (1-\gamma )  \right ]^{\frac{q_{0}}{q_{0}+1} }\left ( \beta ^{\ast }-\beta  \right )^{\frac{q_{0}}{q_{0}+1} }+o(\left ( \beta ^{\ast }-\beta  \right )^{\frac{q_{0}}{q_{0}+1} }) \quad {\rm as}\ \beta \to  \beta ^{\ast }.
\end{split}
\end{equation}
Combining (\ref{e827}) and (\ref{e228}),  we then derive the estimate (\ref{e224}), as claimed.
\end{proof}

\noindent{\bf Proof of Theorem \ref{T3}:} Let $ \left \{ \left ( u_{1\beta _{k}},u_{2\beta _{k}} \right )  \right \} $ be the convergent subsequence obtained in Theorem \ref{T2}. Applying the refined Gagliardo-Nirenberg inequality (\ref{e27}) and Lemma \ref{b3} to (\ref{e51}), we then obtain for any $k\in \mathbb{N} $
\begin{equation} \label{e229}
\begin{split}
e(a_{1},a_{2},\beta _{k})&\ge \left ( \beta ^{\ast }-\beta _{k} \right )D\left ( u_{1\beta _{k}}^{2},u_{2\beta _{k}}^{2} \right )+\sum_{i=1}^{2}\int\limits_{\mathbb{R}^{3} }V_{i}(x)\left | u_{i\beta _{k}} \right |^{2}dx\\
&=\frac{\beta ^{\ast }-\beta _{k}}{\varepsilon _{\beta _{k}}}D\left (\bar{ w} _{1\beta _{k}}^{2},\bar{w} _{2\beta _{k}}^{2} \right )+ \sum_{i=1}^{2}\int\limits_{\mathbb{R}^{3} }V_{i}(\varepsilon _{\beta _{k}}x+z_{i\beta _{k}})\left | \bar{w} _{i\beta _{k}} \right |^{2}dx,
\end{split}
\end{equation}
where the second equality comes from (\ref{e210}). Using (\ref{e78}) and (\ref{e210}), we find that
\begin{equation} \label{e230}
D\left (\bar{ w} _{1\beta _{k}}^{2},\bar{w} _{2\beta _{k}}^{2} \right )=\frac{2\gamma (1-\gamma )}{a^{\ast }}+o(1) \quad {\rm as} \ k\to \infty .
\end{equation}
Note that $x_{0}\in \mathcal{V}$ in (\ref{e200}), hence there exists $1\le j_{0}\le b$ such that $x_{0}=x_{j_{0}}$. Without loss of generality, we may assume that $q_{j_{0}}=q_{1j_{0}}< q_{2j_{0}}$. Next, we claim that
\begin{equation} \label{e231}
\frac{z_{1\beta _{k}}-x_{j_{0}}}{\varepsilon _{\beta _{k}}} \ {\rm is\ bounded\ as}\ k\to \infty,
\end{equation}
and
\begin{equation} \label{e232}
q_{j_{0}}=q_{0}.
\end{equation}
Suppose by contradiction that either $q_{j_{0}}<q_{0}$ or (\ref{e231}) is false. Then, for any given $M>0$, one can infer from Fatou's Lemma that
\begin{equation} \label{e233}
\begin{split}
&\liminf_{k \to \infty} \varepsilon _{\beta _{k}}^{-q_{0}}\int\limits_{\mathbb{R} ^{3}}V_{1}(\varepsilon _{\beta _{k}}x+z_{1\beta _{k}})\left | \bar{w} _{1\beta _{k}} \right |^{2}dx\\
\ge& \int\limits_{\mathbb{R} ^{3}}\liminf_{k \to \infty}\varepsilon _{\beta _{k}}^{q_{j_{0}}-q_{0}}\left ( \frac{V_{1}(\varepsilon _{\beta _{k}}x+z_{1\beta _{k}})}{\left | \varepsilon _{\beta _{k}}x+z_{1\beta _{k}}-x_{j_{0}} \right |^{q_{j_{0}}} }\left | x+\frac{z_{1\beta _{k}}-x_{j_{0}}}{\varepsilon _{\beta _{k}}} \right |^{q_{j_{0}}}\left | \bar{w} _{1\beta _{k}} \right |^{2}     \right )dx \ge M.
\end{split}
\end{equation}
From (\ref{e229}), (\ref{e230}) and (\ref{e233}), we thus deduce that
\begin{equation} \label{e234}
e(a_{1},a_{1},\beta _{k})\ge \frac{2\gamma (1-\gamma )}{a^{\ast }}(\beta ^{\ast }-\beta_{k} )\varepsilon _{\beta _{k}}^{-1}+M\varepsilon _{\beta _{k}}^{q_{0}}\ge CM^{\frac{1}{q_{0}+1} }(\beta ^{\ast }-\beta_{k} )^{\frac{q_{0}}{q_{0}+1} } \quad {\rm for\ large}\ k.
\end{equation}
In fact, the second inequality above holds due to the Young inequality and $C$ is a suitable positive constant. Note that $M$ is arbitrary, the claim is thus verified by contradiction between (\ref{e224}) and (\ref{e234}).
Following (\ref{e203}) and (\ref{e231}), after passing to a subsequence, there exists $z_{0}\in \mathbb{R} ^{3}$ such that
\begin{equation} \label{e235}
\lim_{k \to \infty} \frac{z_{i\beta _{k}}-x_{j_{0}}}{\varepsilon _{\beta _{k}}}=z_{0} \quad {\rm for }\ i=1,2.
\end{equation}
Combining this with (\ref{e211}), we have
\begin{equation} \label{e236}
\begin{split}
&\lim_{k \to \infty}\varepsilon _{\beta _{k}}^{-q_{0}}\left ( \int\limits_{\mathbb{R} ^{3}}V_{1}(\varepsilon _{\beta _{k}}x+z_{1\beta _{k}})\left | \bar{w} _{1\beta _{k}} \right |^{2}dx+\int\limits_{\mathbb{R} ^{3}}V_{2}(\varepsilon _{\beta _{k}}x+z_{1\beta _{k}})\left | \bar{w} _{1\beta _{k}} \right |^{2}dx \right )\\
=& \lim_{k \to \infty} \int\limits_{\mathbb{R} ^{3}}\frac{V_{1}(\varepsilon _{\beta _{k}}x+z_{1\beta _{k}})}{\left | \varepsilon _{\beta _{k}}x+z_{1\beta _{k}}-x_{j_{0}} \right |^{q_{1j_{0}}} }\left | x+\frac{z_{1\beta _{k}}-x_{j_{0}}}{\varepsilon _{\beta _{k}}} \right |^{q_{1j_{0}}}\left | \bar{w} _{1\beta _{k}} \right |^{2}dx+\\
&\lim_{k \to \infty} \varepsilon _{\beta _{k}}^{q_{2j_{0}}-q_{0}}\int\limits_{\mathbb{R} ^{3}}\frac{V_{2}(\varepsilon _{\beta _{k}}x+z_{2\beta _{k}})}{\left | \varepsilon _{\beta _{k}}x+z_{2\beta _{k}}-x_{j_{0}} \right |^{q_{2j_{0}}} }\left | x+\frac{z_{2\beta _{k}}-x_{j_{0}}}{\varepsilon _{\beta _{k}}} \right |^{q_{2j_{0}}}\left | \bar{w} _{2\beta _{k}} \right |^{2}dx\\
=&\lim_{x \to x_{j_{0}}} \frac{\gamma V_{1}(x)}{a^{\ast }\left | x-x_{j_{0}} \right |^{q_{1j_{0}}} }\int\limits_{\mathbb{R} ^{3}} \left | y+z_{0} \right |^{q_{1j_{0}}}Q^{2}(y)dy.
\end{split}
\end{equation}
Note that
\begin{equation} \label{e237}
\int\limits_{\mathbb{R} ^{3}}\left | y+z_{0} \right |^{q_{1j_{0}}}Q^{2}(y)dy\ge \int\limits_{\mathbb{R} ^{3}}\left | y \right |^{q_{1j_{0}}}Q^{2}(y)dy,
\end{equation}
where the equality holds if and only if $z_{0}=0$. Indeed, since $Q$ is radially symmetric and decreasing, we have
$$\int\limits_{\mathbb{R} ^{3}}\left | y+z_{0} \right |^{q_{1j_{0}}}Q^{2}(y)dy=\int\limits_{\mathbb{R} ^{3}}\left | y-z_{0} \right |^{q_{1j_{0}}}Q^{2}(y)dy,$$
then one can obtain
\begin{equation} \label{e238}
\begin{split}
&\int\limits_{\mathbb{R} ^{3}}\left | y+z_{0} \right |^{q_{1j_{0}}}Q^{2}(y)dy- \int\limits_{\mathbb{R} ^{3}}\left | y \right |^{q_{1j_{0}}}Q^{2}(y)dy\\
=&\frac{1}{2}\int\limits_{\mathbb{R} ^{3}} \left ( \left | y+z_{0} \right |^{q_{1j_{0}}}-\left | y \right |^{q_{1j_{0}}} \right )Q^{2}(y)dy+\frac{1}{2}\int\limits_{\mathbb{R} ^{3}} \left ( \left | y-z_{0} \right |^{q_{1j_{0}}}-\left | y \right |^{q_{1j_{0}}} \right )Q^{2}(y)dy\\
= &\frac{1}{2}\int\limits_{\mathbb{R} ^{3}} \left ( \left | y+z_{0} \right |^{q_{1j_{0}}}-\left | y \right |^{q_{1j_{0}}} \right )Q^{2}(y)dy+\frac{1}{2}\int\limits_{\mathbb{R} ^{3}} \left ( \left | y \right |^{q_{1j_{0}}}-\left | y+z_{0} \right |^{q_{1j_{0}}} \right )Q^{2}(y+z_{0})dy\\
=&\frac{1}{2}\int\limits_{\mathbb{R} ^{3}} \left ( \left | y+z_{0} \right |^{q_{1j_{0}}}-\left | y \right |^{q_{1j_{0}}} \right )\left ( Q^{2}(y)-Q^{2}(y+z_{0}) \right ) dy\ge 0,
\end{split}
\end{equation}
which implies that (\ref{e237}) holds. Therefore, from definition of $\lambda _{j_{0}}$ and $\lambda _{0}$, one can find that
\begin{equation} \label{e239}
\begin{split}
\lim_{x \to x_{j_{0}}} \frac{\gamma V_{1}(x)}{a^{\ast }\left | x-x_{j_{0}} \right |^{q_{1j_{0}}} }\int\limits_{\mathbb{R} ^{3}} \left | y+z_{0} \right |^{q_{1j_{0}}}Q^{2}(y)dy&\ge \lim_{x \to x_{j_{0}}} \frac{\gamma V_{1}(x)}{a^{\ast }\left | x-x_{j_{0}} \right |^{q_{1j_{0}}} }\int\limits_{\mathbb{R} ^{3}} \left | y\right |^{q_{1j_{0}}}Q^{2}(y)dy\\
&=\frac{\lambda _{j_{0}}}{a^{\ast }}\ge \frac{\lambda _{0}}{a^{\ast }},
\end{split}
\end{equation}
where the equalities hold if and only if $z_{0}=0$ and $x_{j_{0}} \in  Z_{0}$. Applying (\ref{e229}),(\ref{e230}),(\ref{e236}) and (\ref{e239}), we further obtain
\begin{equation} \label{e240}
e(a_{1},a_{2},\beta _{k})\ge \left ( \frac{2\gamma (1-\gamma )}{a^{\ast }}+o(1) \right ) \left ( \beta ^{\ast }-\beta _{k} \right ) \varepsilon _{\beta _{k}}^{-1}+ \left ( \frac{\lambda _{0}}{a^{\ast }}+o(1) \right )\varepsilon _{\beta _{k}}^{q_{0}}  \quad {\rm as} \ k\to \infty .
\end{equation}
Therefore, one can verify by taking the infimum of the above inequality that
\begin{equation} \label{e241}
\liminf_{k \to \infty} \frac{e(a_{1},a_{2},\beta _{k})}{\left ( \beta ^{\ast }-\beta _{k} \right )^{\frac{q_{0}}{q_{0}+1} }} \ge \frac{q_{0}+1}{q_{0}a^{\ast }}\left ( q_{0}\lambda _{0}  \right )^{\frac{1}{q_{0}+1} } \left [ 2\gamma (1-\gamma ) \right ]^{\frac{q_{0}}{q_{0}+1} },
\end{equation}
where the equality holds if and only if $z_{0}=0$, $x_{j_{0}} \in Z_{0}$ and
\begin{equation} \label{e242}
 \lim_{k \to \infty} \frac{\varepsilon _{\beta _{k}}}{\epsilon _{\beta _{k}} }=1 \ {\rm with}\ \epsilon _{\beta _{k}}>0\ {\rm given\ by}\ (\ref{e221}).
 \end{equation}
 Note from (\ref{e224}) and (\ref{e241}) that the equality in (\ref{e241}) holds, hence $z_{0}=0$, $x_{j_{0}} \in Z_{0}$ and (\ref{e242}) holds. Consequently, (\ref{e722}) and (\ref{e223}) follow from (\ref{e201}),(\ref{e202}) and (\ref{e242}). Moreover, using the fact that $z_{0}=0$, $x_{j_{0}} \in Z_{0}$, (\ref{e220}) follows from (\ref{e235}) and (\ref{e242}). This completes the proof of Theorem \ref{T3}.

 \section{Appendix}
In this appendix, we describe two basic results, which have played the key roles in the above analysis. For the completeness, the detailed proofs of the two results are presented below.
 \begin{lemma}\label{le1}Let $0<u_{1},u_{2}\in H^{\frac{1}{2} }(\mathbb{R} ^{3} )$ be two ground states of (\ref{e8}), then $\left \| u_{1} \right \|_{2}=\left \| u_{2} \right \|_{2}$, i.e., all positive ground states of (\ref{e8}) have the same $L^{2} $-norm.\\
 \end{lemma}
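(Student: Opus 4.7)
The plan is to derive two integral identities for an arbitrary positive solution of (\ref{e8})---the standard energy identity (by testing against $u$) and a Pohozaev-type identity (by exploiting the scaling behavior of the action functional)---and to deduce that every positive solution satisfies (\ref{e21}), so that its action value is determined solely by $\|u\|_2^2$. The lemma will then follow from the variational characterization of a ground state as a positive solution of least action.

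First, multiplying (\ref{e8}) by $u$, integrating over $\mathbb{R}^3$, and using $\int u\sqrt{-\Delta}u\,dx = \int|(-\Delta)^{1/4}u|^2\,dx$ gives
\begin{equation*}
A + B = C, \qquad A := \int_{\mathbb{R}^3} u\sqrt{-\Delta}u\,dx,\ \ B := \|u\|_{2}^{2},\ \ C := D(u^{2},u^{2}).
\end{equation*}
Second, since any positive solution $u$ is a critical point of the action
\begin{equation*}
J(v) = \tfrac{1}{2}\int_{\mathbb{R}^3} v\sqrt{-\Delta}v\,dx + \tfrac{1}{2}\int_{\mathbb{R}^3} v^{2}\,dx - \tfrac{1}{4}D(v^{2},v^{2}),
\end{equation*}
I will test against the dilation $u_{\lambda}(x):=u(\lambda x)$. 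Using the $m=0$ case of the scaling identity (\ref{e701}), namely $\sqrt{-\Delta}\,u_{\lambda}(x)=\lambda(\sqrt{-\Delta}u)(\lambda x)$, together with the substitution $y=\lambda x$, one obtains
\begin{equation*}
J(u_{\lambda}) \;=\; \tfrac{1}{2}\lambda^{-2}A + \tfrac{1}{2}\lambda^{-3}B - \tfrac{1}{4}\lambda^{-5}C.
\end{equation*}
Differentiating at $\lambda=1$ and using that $\frac{d}{d\lambda}J(u_{\lambda})|_{\lambda=1}=0$ yields the Pohozaev-type identity $A+\tfrac{3}{2}B=\tfrac{5}{4}C$.

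Combining the two linear relations forces $A=B=C/2$, which is exactly (\ref{e21}), and substituting back into $J$ gives the key consequence $J(u)=\tfrac{1}{2}\|u\|_{2}^{2}$ for every positive solution $u$ of (\ref{e8}). Now, a ground state is by definition a positive solution of least action, so any two positive ground states $u_{1},u_{2}$ satisfy $J(u_{1})=J(u_{2})$, and the formula above immediately yields $\|u_{1}\|_{2}=\|u_{2}\|_{2}$, proving the lemma.

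The one delicate step is the rigorous justification of the Pohozaev identity: the tangent vector of the curve $\lambda\mapsto u_{\lambda}$ at $\lambda=1$ is $x\cdot\nabla u$, which must be admissible in $H^{\frac{1}{2}}(\mathbb{R}^3)$ in order to pair with $J'(u)$. The polynomial decay $Q(x)=O(|x|^{-4})$ from (\ref{e9}) and standard fractional elliptic regularity for positive solutions of (\ref{e8}) make $x\cdot\nabla u \in L^{2}$ and sufficiently regular for this argument to go through; alternatively, one can bypass this by testing (\ref{e8}) directly against $x\cdot\nabla u$ and handling the nonlocal term $\int (x\cdot\nabla u)\sqrt{-\Delta}u\,dx$ through its Fourier representation, as developed in \cite{Bona16}. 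Apart from this technicality, the rest of the argument is purely algebraic.
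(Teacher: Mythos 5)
Your proposal is correct and follows essentially the same route as the paper: both combine the energy identity obtained by testing (\ref{e8}) against $u$ with the Pohozaev identity $2A+3B=\tfrac{5}{4}\cdot 2C$ to deduce (\ref{e21}), conclude that $I(u)=\tfrac{1}{2}\|u\|_{2}^{2}$ on the set of positive solutions, and then invoke the least-action definition of a ground state. The only difference is that the paper imports the Pohozaev identity directly from Theorem 6.1 of the D'Avenia--Siciliano--Squassina reference rather than rederiving it by the dilation argument whose justification you correctly flag as the delicate point.
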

\begin{proof} One can obtain from Theorem 6.1 in \cite{Zhou} that the following  the Pohozaev identity holds for all solutions of (\ref{e8}).
   \begin{equation} \label{e10}
2\int\limits_{\mathbb{R} ^{3} }\left | (-\triangle )^{\frac{1}{4} }u  \right | ^{2}dx+3\int\limits_{\mathbb{R} ^{3} }u^{2}dx=\frac{5}{2}\iint\limits_{\mathbb{R} ^{3}\times \mathbb{R} ^{3}}\frac{u^{2}(x)u^{2}(y) }{\left | x-y \right | }dxdy.
	\end{equation}
Also, one can derive from (\ref{e8}) and (\ref{e10}) that any solution $u(x)$ of (\ref{e8}) satisfies
\begin{equation} \label{e11}
\int\limits_{\mathbb{R} ^{3} }\left | (-\triangle )^{\frac{1}{4} }u  \right | ^{2}dx=\int\limits_{\mathbb{R} ^{3} }u^{2}dx=\frac{1}{2}\iint\limits_{\mathbb{R} ^{3}\times \mathbb{R} ^{3}}\frac{u^{2}(x)u^{2}(y) }{\left | x-y \right | }dxdy.
	\end{equation}
The associated energy functional of (\ref{e8}) is given by
\begin{equation} \label{e12}
I(u)=\frac{1}{2} \int\limits_{\mathbb{R} ^{3} }\left | (-\triangle )^{\frac{1}{4} }u  \right | ^{2}+u^{2}dx-\frac{1}{4}\iint\limits_{\mathbb{R} ^{3}\times \mathbb{R} ^{3}}\frac{u^{2}(x)u^{2}(y) }{\left | x-y \right | }dxdy, \quad  u\in H^{\frac{1}{2} }(\mathbb{R} ^{3} ).
	\end{equation}
If $0<u\in H^{\frac{1}{2} }(\mathbb{R} ^{3} )$ satisfies $\left \langle I^{'}(u),\varphi   \right \rangle=0$  for any $\varphi \in H^{\frac{1}{2} }(\mathbb{R} ^{3} )$, then $u$ is called a positive weak solution of (\ref{e8}). Let
\begin{equation} \label{e13}
S:=\left \{u\in H^{\frac{1}{2} }(\mathbb{R} ^{3} ): \text{u \ is \ a\  positive\  weak\  solution\  of} \ (\ref{e8})  \right \},
	\end{equation}
\begin{equation} \label{e14}
G:=\left \{u\in S: I(u)\le I(v) \quad \forall v\in S  \right \}.
	\end{equation}
Then,  each element in $G$ is called a ground state of (\ref{e8}). Next, we claim if $u_{1}, u_{2}\in G$ and $u_{3}\in S$, then we have
\begin{equation} \label{e15}
\left \| u_{1} \right \| _{2}= \left \| u_{2} \right \| _{2}\le \left \| u_{3} \right \| _{2}.
	\end{equation}
In fact, for any $u\in S$, combining (\ref{e11}) and (\ref{e12}) yields that
\begin{equation} \label{e16}
I(u)=\frac{1}{2} \int\limits_{\mathbb{R} ^{3} }u^{2}dx.
	\end{equation}
Thus, the above claim follows by applying (\ref{e16}) and the definition of $G$ and therefore the proof is finished.
\end{proof}

 \begin{lemma}\label{le2} The refined Gagliardo-Nirenberg inequality (\ref{e27}) holds with the best constant $\frac{2}{a^{\ast } }$. Furthermore, each optimizer $\left ( u_{10}, u_{20} \right )$ of (\ref{e27}) has the form
 \begin{equation} \label{e18}
\left ( u_{10}(x), u_{20}(x) \right )=\left ( \tau sin\theta  Q\left (\alpha x+\eta  \right ) ,\tau cos\theta Q(\alpha x+\eta ) \right ),
	\end{equation}
where $\tau>0, \alpha>0, \theta \in \left [ 0,2\pi  \right )$ and $\eta \in \mathbb{R} ^{3}$.\\

 \end{lemma}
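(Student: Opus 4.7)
The plan is to reduce the two-component inequality to the one-component inequality (\ref{e17}) by a chain of three estimates, each of which is sharp precisely when the conjectured optimizer form (\ref{e18}) holds. First, I would expand the symmetric bilinear form on the left-hand side of (\ref{e27}) as
\begin{equation*}
D(u_1^2+u_2^2,\,u_1^2+u_2^2)=D(u_1^2,u_1^2)+2D(u_1^2,u_2^2)+D(u_2^2,u_2^2)
\end{equation*}
and apply Lemma \ref{b3} to the cross term, which yields the bound
\begin{equation*}
D(u_1^2+u_2^2,\,u_1^2+u_2^2)\le\Bigl(\sqrt{D(u_1^2,u_1^2)}+\sqrt{D(u_2^2,u_2^2)}\Bigr)^{2}.
\end{equation*}

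Second, I would invoke the single-component Gagliardo--Nirenberg inequality (\ref{e17}) componentwise, which gives
\begin{equation*}
\sqrt{D(u_i^2,u_i^2)}\le\sqrt{\tfrac{2}{a^\ast}}\,\bigl\|(-\triangle)^{1/4}u_i\bigr\|_{2}\,\|u_i\|_{2},\qquad i=1,2.
\end{equation*}
Setting $a_i:=\|(-\triangle)^{1/4}u_i\|_{2}$ and $b_i:=\|u_i\|_{2}$, the Cauchy--Schwarz inequality in $\mathbb{R}^2$ yields $(a_1b_1+a_2b_2)^2\le(a_1^2+a_2^2)(b_1^2+b_2^2)$. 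Chaining the three estimates produces exactly (\ref{e27}) with constant $2/a^\ast$. Sharpness of the constant is then verified directly by inserting the test pair $(u_{10},u_{20})=(\tau\sin\theta\,Q(\alpha x+\eta),\tau\cos\theta\,Q(\alpha x+\eta))$ and using (\ref{e21}) to check that each of the three inequalities becomes an equality.

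Third, to classify the optimizers I would trace the equality conditions backward through the same chain. Equality in Lemma \ref{b3} forces $u_1^2=\kappa\,u_2^2$ for some $\kappa>0$, so after choosing signs (recall that replacing $(u_1,u_2)$ by $(|u_1|,|u_2|)$ only decreases the right-hand side via (\ref{e600})) one has $u_1=\sqrt{\kappa}\,u_2$; in particular the two components share a common profile. Equality in the scalar inequality (\ref{e17}) forces that common profile to be a positive ground state of (\ref{e8}), hence of the form $c\,Q(\alpha x+\eta)$ by Lemma \ref{le1} together with the scaling symmetry $u\mapsto c\,u(\alpha\cdot)$ of (\ref{e8}). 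The remaining Cauchy--Schwarz equality $a_1b_2=a_2b_1$ is then automatic from $u_1=\sqrt{\kappa}u_2$. Writing $\tau:=c\sqrt{1+\kappa}$, $\sin\theta:=\sqrt{\kappa/(1+\kappa)}$ and $\cos\theta:=1/\sqrt{1+\kappa}$ recovers the parametric form (\ref{e18}).

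The main subtlety I foresee is the equality analysis in the scalar step: one must know that every $H^{1/2}$-optimizer of (\ref{e17}) is (up to translation and $L^{2}$-preserving rescaling) a positive ground state of (\ref{e8}). This rests on the variational characterization of $Q$ together with Lemma \ref{le1}, which guarantees that all positive ground states share the same $L^{2}$-norm $a^{\ast}$ and hence form a single orbit under the symmetry group $u\mapsto \tau\,Q(\alpha\cdot+\eta)$; the scaling coefficient $\alpha$ must appear because (\ref{e17}) is scale-invariant while (\ref{e8}) is not, so a reparametrization of any extremizer back to the normalized ground state introduces $\alpha$. Everything else is a direct combination of the three inequalities above.
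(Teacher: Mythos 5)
Your proposal is correct, but it reaches (\ref{e27}) by a genuinely different decomposition than the paper. The paper's proof passes through the single function $w=\sqrt{u_1^2+u_2^2}$: it applies the convexity inequality for the quadratic form of $(-\triangle)^{1/4}$ (Theorem 7.13 in \cite{Galewski}) to get $\int|(-\triangle)^{1/4}w|^2\le\sum_i\int|(-\triangle)^{1/4}u_i|^2$, and then invokes the scalar inequality (\ref{e17}) once for $w$; equality in the convexity step immediately forces $u_{20}=cu_{10}$, after which the scalar optimizer classification (cited from Lemma B.1 of \cite{Amrouss23}) finishes the argument. You instead expand $D(u_1^2+u_2^2,u_1^2+u_2^2)$, control the cross term by Lemma \ref{b3}, apply (\ref{e17}) componentwise, and close with Cauchy--Schwarz in $\mathbb{R}^2$; tracing equality backward through these three steps yields the same proportionality $u_1^2=\kappa u_2^2$ and the same reduction to the scalar equality case. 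Your route avoids the fractional convexity inequality altogether and uses only tools already displayed in the paper (Lemma \ref{b3}, (\ref{e17}), (\ref{e21})), at the cost of one extra equality condition to check (which, as you note, is automatic once the components are proportional). Both arguments ultimately rest on the same nontrivial external input, namely that every optimizer of the scalar inequality (\ref{e17}) is of the form $\nu Q(\alpha x+\eta)$; be aware that Lemma \ref{le1} alone does not deliver this (it only gives equality of $L^2$-norms of ground states, not that optimizers of (\ref{e17}) constitute a single orbit), and the paper explicitly imports this classification from \cite{Amrouss23} rather than deriving it. A minor point you handle only in passing, as does the paper: to conclude that each component of an optimizer is a fixed-sign multiple of $Q$ one should note that strict inequality in (\ref{e600}) for a sign-changing component would contradict optimality.
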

\begin{proof} Following the idea of \cite{Du22}, let us consider
$$ m:=\inf_{\left ( 0,0 \right )\ne (u_{1},u_{2} )\in H^{\frac{1}{2} }(\mathbb{R} ^{3})\times H^{\frac{1}{2} }(\mathbb{R} ^{3}) }J(u_{1},u_{2}),$$	
where
 $$
J\left ( u_{1},u_{2} \right ):= \frac{\displaystyle\int\limits_{\mathbb{R} ^{3} }\left (\left | (-\triangle )^{\frac{1}{4} }u_{1}  \right |^{2}+\left | (-\triangle )^{\frac{1}{4} }u_{2}  \right |^{2}\right )dx\int\limits_{\mathbb{R} ^{3} }\left (u_{1}^{2}+u_{2}^{2}\right )dx}{D\left ( u_{1}^{2}+u_{2}^{2},u_{1}^{2}+u_{2}^{2} \right ) }.
$$
Take $\left ( u_{1},u_{2} \right )=\left ( \frac{1}{\sqrt{2} }Q,\frac{1}{\sqrt{2} }Q  \right )$ as a test term, from (\ref{e21}), we thus infer
\begin{equation} \label{e22}
m\le J\left ( \frac{1}{\sqrt{2} }Q,\frac{1}{\sqrt{2} }Q  \right )=\frac{a^{\ast } }{2}.
\end{equation}
Moreover, from convexity inequality (see Theorem 7.13 in \cite{Galewski}) and (\ref{e17}), one can derive
\begin{equation} \label{e28}
J\left ( u_{1},u_{2} \right )\ge \frac{\displaystyle\int\limits_{\mathbb{R} ^{3} }\left | (-\triangle )^{\frac{1}{4} }\sqrt{u_{1}^{2}+u_{2}^{2}}    \right |^{2}dx\int\limits_{\mathbb{R} ^{3} }\left | \sqrt{u_{1}^{2}+u_{2}^{2}} \right | ^{2}  dx}{D\left ( u_{1}^{2}+u_{2}^{2},u_{1}^{2}+u_{2}^{2} \right ) }
\ge \frac{a^{\ast } }{2},
\end{equation}
where $\left ( u_{1},u_{2}  \right )\in H^{\frac{1}{2} }(\mathbb{R} ^{3})\times H^{\frac{1}{2} }(\mathbb{R} ^{3})\setminus \left \{ \left ( 0,0 \right )  \right \}$. Combining (\ref{e22}) and (\ref{e28}), we thus deduce
$$ m= J\left ( \frac{1}{\sqrt{2} }Q,\frac{1}{\sqrt{2} }Q  \right )=\frac{a^{\ast } }{2},$$
which shows that (\ref{e27}) holds with the best constant $\frac{ 2 }{a^{\ast}}$.
Note that, if $\left ( u_{10},u_{20} \right )$ is an optimizer of inequality (\ref{e27}), we then follow from Theorem 7.13 in \cite{Galewski} that $u_{20}=cu_{10}$ a.e. for some constant $c$. We therefore conclude that
$$\iint\limits_{\mathbb{R} ^{3}\times \mathbb{R} ^{3}}\frac{u_{10} ^{2}(x)u_{10}^{2}(y) }{\left | x-y \right | }dxdy = \frac{2}{a^{\ast } } \int\limits_{\mathbb{R} ^{3} }\left | (-\triangle )^{\frac{1}{4} }u_{10}  \right |^{2}dx\int\limits_{\mathbb{R} ^{3} }u_{10}^{2}dx,$$
which indicates that the classical Gagliardo-Nirenberg inequality (\ref{e17}) can be achieved at $u_{10}$. Similarly as in Lemma B.1 in \cite{Amrouss23}, we infer
$$u_{10}=\nu Q\left ( \alpha x+\eta  \right )$$
with some $\nu\ne 0,\alpha > 0$ and $ \eta \in \mathbb{R} ^{3}$. Therefore, one can deduce
$$  \left ( u_{10}(x),u_{20}(x) \right ) =\left ( \tau sin\theta  Q\left ( \alpha x+\eta  \right ),\tau cos\theta Q\left ( \alpha x+\eta  \right )   \right )$$
with some $\tau > 0,\alpha > 0,\theta \in \left [ 0,2\pi  \right )$ and $ \eta \in \mathbb{R} ^{3}$, which implies the proof is finished.

\end{proof}

\section*{Competing interests}
The authors declare that they have no conflicts of interest.

\section*{Authors contribution}
Each of the authors contributed to each part of this study equally, all authors read and approved the final manuscript.

\section*{Availability of supporting data}
Data sharing does not apply to this article as no data sets were generated or analyzed during the current study.

\section*{Funding}
Huiting He was supported by the Guangdong Basic and Applied Basic Research
Foundation (2024A1515012389) and the Guangzhou University Postgraduate Creative Capacity
Development Grant Scheme (Project No.2022GDJC-M01).

Chungen Liu was supported by the National Natural Science Foundation of China (Grant No.12171108) and the Guangdong Basic and Applied Basic Research
Foundation (2024A1515012389).

Jiabin Zuo was supported by 
 the Guangdong Basic and Applied Basic Research
Foundation (2024A1515012389).
\medskip
%%%%%%%%%%%%%%%%%%%%%%%%%%%%%%
\bibliographystyle{plain}

%%%%%%%%%%%%%%%%%%%%%%%%%%%%%%%%%%%%%%%%%%%%%%%%%%%%%%%%%%%%%%%%%%%%%%%%%%%

\end{document}